\newtheorem{Proposition}{Proposition}[section]
\numberwithin{equation}{section}
\newtheorem{Lemma}[Proposition]{Lemma}
\newtheorem{Theorem}[Proposition]{Theorem}
\newtheorem{Corollary}[Proposition]{Corollary}
\theoremstyle{definition}
\newtheorem{Definition}[Proposition]{Definition}
\newtheorem{Conjecture}[Proposition]{Conjecture}
\def\part{{\operatorname{part}}}
\def\C{{\mathbb C}}
\def\Z{{\mathbb Z}}
\def\g{\mathfrak g}
\def\s{\mathfrak s}
\def\gl{\mathfrak{gl}}
\def\so{\mathfrak{so}}
\def\sp{\mathfrak{sp}}
\def\part{{\operatorname{part}}}
\title{
A Drinfeld presentation for the twisted Yangian $Y_3^+$
}
\author
{Jonathan Brown}
\address{Department of Mathematics, Computer Science, and Statistics, SUNY Oneonta, Oneonta, NY 13820}
\email{jonathan.brown@oneonta.edu}
\begin{document}

\begin{abstract}
We define the Drinfeld generators for $Y_3^+$, the twisted Yangian associated to the Lie algebra $\so_3(\C)$. 
This allows us to define shifted twisted Yangians, which are certain subalgebras of $Y_3^+$.
We show that there are families of homomorphisms from the shifted twisted Yangians in $Y_3^+$ to 
the universal enveloping algebras of  various orthogonal and symplectic Lie algebras,  and we conjecture that the images of these
homomorphisms are isomorphic to various finite $W$-algebras.
\end{abstract}

\maketitle

\section{Introduction}
In 
\cite{BKshifted} Brundan and Kleshchev define a subalgebra of the Yangian called a shifted Yangian.  
They define shifted Yangians in terms of certain {\em Drinfeld generators} for the Yangian.
The Drinfeld generators were created by Drinfeld in \cite{Dr} in order to study the representation theory of Yangians.
Brundan and Kleshchev then show that
any finite $W$-algebra corresponding to a nilpotent element in $\mathfrak{gl}_n(\C)$ is a homomorphic quotient of a shifted Yangian.
Since then the theory of shifted Yangians has been further developed in \cite{BKrep}, \cite{FMO}, \cite{Pe}, and \cite{KTWWY}.

The main goal of this paper is to start to generalize the results of Brundan and Kleshchev to the other classical Lie algebras.  
Significant progress along these lines is made by the author in \cite{Br} for 
finite $W$-algebras corresponding to a rectangular nilpotent element in classical Lie algebras.
A nilpotent element in a classical Lie algebra is rectangular if its Jordan type is of the form $(l^n)$, i.e. all of its Jordan blocks have the same size.
In this case, it is shown in \cite{Br} that  a finite $W$-algebra corresponding to such a nilpotent is a homomorphic quotient of a twisted Yangian.
In order to generalize this result to arbitrary nilpotent elements in classical Lie algebras we need to first define {\em shifted twisted Yangians};
algebras analogous to the shifted Yangians from \cite{BKshifted}, and to do this we first need to define the Drinfeld generators for twisted Yangians.

This work was motivated by extensive computer calculations to find generators for
various finite $W$-algebras associated to certain nilpotent elements in symplectic and orthogonal Lie algebras. 
As a first step in this paper we find Drinfeld generators for $Y_3^+$, the twisted Yangian associated to the the Lie algebra $\so_3(\C)$,
and we give a presentation of $Y_3^+$ in terms of these generators.

In the following theorems, and throughout this paper except where otherwise noted, 
any occurrence of the letters $a,b,c$ as indices should be interpreted as summing over $\{\pm 1\}$.
So for example, $e_{1,a} e_{a,-1} = e_{1,1}e_{1,-1} + e_{1,-1} e_{-1,-1}$.

\begin{Theorem} \label{T:main1}
    The Yangian $Y_3^+$ 
    has a presentation with
    generators
\[
    \left\{E_{i}^{(m)}, F_j^{(n)}, D_{k,l}^{(r)}, \tilde D_{f,h}^{(s)}, G^{(t)} \mid i,j,k,l,f,h \in \{\pm 1\}, m,n,r,s,t \in \Z_{\geq 0} \right\}
\]
and relations
    \begin{equation} \label{EQ:oD1}
        D_{i,j}^{(0)} = \tilde D_{i,j}^{(0)} = \delta_{i,j}, \quad G^{(0)} = 1, \quad E_i^{(0)} = F_i^{(0)} = 0,
    \end{equation}

    \begin{equation} \label{EQ:oD2}
      \sum_{r=0}^n D_{i,a}^{(r)} \tilde D_{a,j}^{(n-r)} = \delta_{n,0} \delta_{i,j},
    \end{equation}
    
    \begin{equation} \label{EQ:GG}
       [G^{(m)}, G^{(n)}] = 0,
    \end{equation}

    \begin{align} \label{EQ:oDD}
        [D_{i,j}^{(m)}, D_{k,l}^{(n)}] &= 
            \sum_{r=0}^{m-1} D_{k,j}^{(m-1-r)} D_{i,l}^{(n+r)} - D_{k,j}^{(n+r)} D_{i,l}^{(m-1-r)} \\ \notag
            &\quad - \sum_{r=0}^{m-1} (-1)^r \left(  D_{i,-k}^{(m-1-r)} D_{-j,l}^{(n+r)} - D_{k,-i}^{(n+r)} D_{-l,j}^{(m-1-r)} \right) \\   \notag
            &\quad + \sum_{r=0}^{\lfloor{m/2}\rfloor-1} D_{k,-i}^{(m-2-2r)} D_{-j,l}^{(n+2r)} - 
                       D_{k,-i}^{(n+2r)} D_{-j,l}^{(m-2-2r)},
    \end{align}

    \begin{equation} \label{EQ:oDs}
         D_{i,j}^{(n)} = (-1)^n D_{-j,-i}^{(n)} + \tfrac{1}{2} \left( D_{-j,-i}^{(n-1)} - (-1)^n D_{-j,-i}^{(n-1)} \right),
    \end{equation}
    
    \begin{equation} \label{EQ:FEs}
         F_i^{(m)} = (-1)^m \sum_{r=1}^m 2^{m-r} \binom{m-1}{m-r} E_{-i}^{(r)},
    \end{equation}
    
    \begin{equation} \label{EQ:EFs}
         E_i^{(m)} = (-1)^m \sum_{r=1}^m 2^{m-r} \binom{m-1}{m-r} F_{-i}^{(r)},
    \end{equation}

    \begin{equation} \label{EQ:DEreln}
   [D_{i,j}^{(m)}, E_{k}^{(n)}] = \delta_{j,k} \sum_{r=0}^{m-1} D_{i,a}^{(r)} E_{a}^{(m+n-1-r)} 
   - \delta_{i,-k} \sum_{r=0}^{m-1} \sum_{s=0}^r (-2)^s (-1)^{r-s} 
   \begin{pmatrix} r\\ s \end{pmatrix}
   E_{-a}^{(n+r-s)} D_{a,j}^{(m-1-r)},
\end{equation}

    \begin{equation} \label{EQ:DFreln}
   [D_{i,j}^{(m)},F_k^{(n)}] = -\delta_{i,k} \sum_{r=0}^{m-1}  F_{a}^{(m+n-1-r)} D_{a,j}^{(r)} 
   + \delta_{j,-k} \sum_{r=0}^{m-1} \sum_{s=0}^r (-2)^s (-1)^{r-s} 
   \begin{pmatrix} r\\ s \end{pmatrix}
    D_{i,a}^{(m-1-r)} F_{-a}^{(n+r-s)},
\end{equation}

\begin{equation} \label{EQ:GEreln}
   [G^{(m)},E_{i}^{(n)}] = - \sum_{r=0}^{m-1}  G^{(r)} E_{i}^{(m+n-1-r)} 
   + \sum_{r=0}^{m-1} \sum_{s=0}^r (-2)^s (-1)^{r-s} 
   \begin{pmatrix} r \\ s \end{pmatrix}
    E_{i}^{(n+r-s)} G^{(m-1-r)},
\end{equation}

\begin{equation} \label{EQ:GFreln}
   [G^{(m)},F_{i}^{(n)}] = \sum_{r=0}^{m-1}  F_{i}^{(m+n-1-r)} G^{(r)}
   - \sum_{r=0}^{m-1} \sum_{s=0}^r (-2)^s (-1)^{r-s} 
   \begin{pmatrix} r \\ s \end{pmatrix}
   G^{(m-1-r)} F_{i}^{(n+r-s)},
\end{equation}

\begin{align} \label{EQ:EFreln}
   [E_{i}^{(m)}, F_{j}^{(n)}] &= - \sum_{r=0}^{n+m-1} G^{(r)} \tilde D_{i,j}^{(n+m-1-r)} \\ \notag
     &\quad 
     - \sum_{r=0}^{m-1} 
     \sum_{s=0}^r 
     (-2)^s(-1)^{r-s}
     \begin{pmatrix}
          r \\ s
     \end{pmatrix}
       \left( E_{i}^{(m-r-1)} F_{j}^{(n+r-s)} + F_{-i}^{(n+r-s)} E_{-j}^{(m-r-1)} \right) \\ \notag
       &\quad + 
       \sum_{r=0}^{m-1} 
     (-2)^r(-1)^{m-1-r}
     \begin{pmatrix}
          m-1 \\ r
     \end{pmatrix}
       \sum_{s=0}^{m+n-1-r}
      F_{-i}^{(s)} F_{j}^{(m+n-1-r-s)},
\end{align}

\begin{align} \label{EQ:EEreln}
   [E_{i}^{(m)}, E_{j}^{(n)}] &= 
   \sum_{r=0}^{n-1} E_{j}^{(m+n-1-r)} E_{i}^{(r)}
   - \sum_{r=0}^{m-1} E_{j}^{(m+n-1-r)} E_{i}^{(r)}
   \\ \notag
       &\quad - 
       \sum_{r=0}^{m-1} 
     (-2)^r(-1)^{m-1-r}
     \begin{pmatrix}
          m-1 \\ r
     \end{pmatrix}
      \sum_{s=0}^{m+n-1-r}
      \tilde D_{j,-i}^{(s)} G^{(m+n-1-r-s)}.
\end{align}

\begin{align} \label{EQ:FFreln}
   [F_{i}^{(m)}, F_{j}^{(n)}] &= 
   \sum_{r=0}^{m-1} F_{j}^{(r)} F_{i}^{(m+n-1-r)} 
   - \sum_{r=0}^{n-1} F_{j}^{(r)} F_{i}^{(m+n-1-r)} 
   \\ \notag
       &\quad - 
       \sum_{r=0}^{n-1} 
     (-2)^r(-1)^{n-1-r}
     \begin{pmatrix}
          n-1 \\ r
     \end{pmatrix}
      \sum_{s=0}^{m+n-1-r}
      \tilde D_{-j,i}^{(s)} G^{(m+n-1-r-s)}.
\end{align}

\end{Theorem}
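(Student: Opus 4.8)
The plan is to adapt to the twisted setting the strategy of Brundan and Kleshchev \cite{BKshifted} (as carried out for twisted Yangians in \cite{Br}): realize the Drinfeld generators via a Gauss-type factorization of the defining matrix, verify the listed relations to obtain a surjection from the abstractly presented algebra, and then prove injectivity by an associated-graded argument. Write $S(u) = (s_{ij}(u))$ for the generating matrix of $Y_3^+$ in its reflection-equation presentation, with rows and columns indexed by a suitable ordering of $\{-1,0,1\}$ in which the pair $\{-1,1\}$ forms an initial $2\times 2$ block. Take the block Gauss factorization $S(u) = F(u)\,\operatorname{diag}(\mathcal D(u),\,G(u))\,E(u)$, with $F(u)$ lower-unitriangular, $E(u)$ upper-unitriangular, $\mathcal D(u) = (D_{i,j}(u))_{i,j\in\{\pm1\}}$ the $2\times2$ corner, and $G(u)$ the scalar Schur complement. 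Declare the generating series of $D_{i,j}^{(r)}$, $E_i^{(m)}$, $F_i^{(n)}$, $G^{(t)}$ to be the entries of these factors, and $\tilde D_{f,h}^{(s)}$ the entries of $\mathcal D(u)^{-1}$; then \eqref{EQ:oD1} and \eqref{EQ:oD2} are immediate, and since $S(u) = F\mathcal D E$ is recovered from the factors, these series generate $Y_3^+$.

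Next, I would verify \eqref{EQ:GG}--\eqref{EQ:FFreln} directly from the reflection equation together with the $u \mapsto -u$ symmetry relation defining $Y_3^+$. Relations \eqref{EQ:oDD} and \eqref{EQ:oDs} should come from restricting both defining relations to the $2\times2$ corner; \eqref{EQ:GG} records that the Schur-complement series generates a commutative subalgebra. The mixed relations \eqref{EQ:DEreln}--\eqref{EQ:FFreln} are extracted by conjugating the reflection equation by the Gauss factors and reading off matrix coefficients, the coefficients $(-2)^s(-1)^{r-s}\binom{r}{s}$ arising from inverting unitriangular series while propagating the half-integer shift already visible in \eqref{EQ:oDs}. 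The feature with no untwisted analogue is that the lower factor $F(u)$ is determined by the upper factor $E(u)$: applying the defining symmetry $S(u)\sim S(-u)$ to the factorization forces \eqref{EQ:FEs} and \eqref{EQ:EFs}. This is the calculation-heavy core of the argument, and every coefficient must be tracked exactly. The outcome is a surjective homomorphism $\Phi\colon \widehat Y \twoheadrightarrow Y_3^+$ from the algebra $\widehat Y$ on the stated generators and relations.

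For injectivity, put the loop filtration on $Y_3^+$, so that $\operatorname{gr} Y_3^+ \cong U(\mathfrak{g}^{\mathrm{tw}})$ for the twisted current algebra $\mathfrak{g}^{\mathrm{tw}}$ attached to $\so_3$, and filter $\widehat Y$ compatibly with $X^{(r)}$ in degree $r-1$. Using \eqref{EQ:oD2}, \eqref{EQ:oDs}, \eqref{EQ:FEs}, \eqref{EQ:EFs} (and \eqref{EQ:EEreln} in low degree) one trims the generating set of $\widehat Y$ to a minimal one — eliminating the $\tilde D$'s, expressing the $F$'s through the $E$'s, halving the $D$'s via the symmetry, and discarding the redundant $G$'s — and the commutation relations \eqref{EQ:GG}, \eqref{EQ:oDD}, \eqref{EQ:DEreln}--\eqref{EQ:FFreln} then let one straighten any monomial into an ordered one, so that $\operatorname{gr}\widehat Y$ is spanned by ordered monomials in the symbols of this minimal set. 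As the leading symbols of those relations reproduce exactly the Lie bracket of $\mathfrak{g}^{\mathrm{tw}}$, one gets surjections $U(\mathfrak{g}^{\mathrm{tw}}) \twoheadrightarrow \operatorname{gr}\widehat Y \twoheadrightarrow \operatorname{gr} Y_3^+ \cong U(\mathfrak{g}^{\mathrm{tw}})$ whose composite is the identity; hence all three maps are isomorphisms, and therefore $\Phi$ is an isomorphism.

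I expect the main obstacle to be twofold. The first part is the bulk of the coefficient bookkeeping in deriving \eqref{EQ:DEreln}--\eqref{EQ:FFreln}: keeping the binomial-and-sign coefficients, the half-integer shifts, and the $\{\pm1\}$-index summations consistent through long manipulations of the reflection equation. The second, more structural, part is establishing completeness of the relation list — that the leading symbols of exactly the stated relations suffice to present $U(\mathfrak{g}^{\mathrm{tw}})$, equivalently that the straightening in $\widehat Y$ terminates at a PBW basis of the right size and introduces no further relations. This requires identifying explicit Chevalley-type generators of $\mathfrak{g}^{\mathrm{tw}}$, matching them with the symbols of the low-degree Drinfeld generators, and checking that the symbol of each relation above lies in the ideal they generate.
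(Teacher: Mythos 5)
Your overall route --- the Gauss factorization \eqref{EQ:Smatrix} of the $S$-matrix with $\{-1,1\}$ as the leading $2\times 2$ block, verification of the listed relations from the reflection equation, and a filtration/PBW comparison for completeness (the paper does this via the admissible-monomial basis of \cite{MNO} and the lemma in $\S$3.3, rather than naming the twisted current algebra, but it is the same argument) --- is exactly the paper's. The one step that would fail as you describe it is the derivation of \eqref{EQ:FEs} and \eqref{EQ:EFs}. These are the coefficient form of $F_i(-u)=E_{-i}(u-2)$, a relation between the Gauss factors at the arguments $w$ and $-w-2$; the symmetry relation \eqref{tyreln2} only ever relates $S(u)$ to $S(-u)$ (with $\tfrac{1}{2u}$ corrections), and the Gauss factors are rational expressions in the entries of $S$ at a single argument, so no identity obtained by ``applying the defining symmetry to the factorization'' can produce the shift by $2$. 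What \eqref{tyreln2} does give on the entries $S_{0,j}$ and $S_{i,0}$ is a shiftless relation between $F(\mp u)D(\mp u)$ and $D(\pm u)E(\pm u)$, which is not \eqref{EQ:FEs}; only \eqref{EQ:oDs} comes from \eqref{tyreln2} directly, since $D_{i,j}(u)=S_{i,j}(u)$ on the corner. In the paper the $E$--$F$ symmetry is a consequence of the reflection equation: one takes the series relation for $[F_{0,i}(u),G(v)]$ in Theorem \ref{T:1} \eqref{EQ:GFactual}, clears the $u+v+2$ denominator, multiplies by $\tilde G(v)$, and specializes $v=u-2$, $u\mapsto -u$, i.e., the relation is read off from the degenerate locus $u+v+2=0$.

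Relatedly, your heuristic that the coefficients $(-2)^s(-1)^{r-s}\binom{r}{s}$ arise from ``inverting unitriangular series while propagating the half-integer shift visible in \eqref{EQ:oDs}'' misidentifies their source: \eqref{EQ:oDs} carries a $\tfrac{1}{2u}$ factor, not an argument shift, and the binomial pattern is precisely the expansion of $\tfrac{1}{u+v+2}$ as in \eqref{EQ:uv2} and \eqref{EQ:xcoeff}, the $u+v+2$ denominators being generated when one Gauss factor is commuted past another using the reflection equation (this is the content of Theorem \ref{T:1}, proved from \eqref{tyrelnalt}). This is more than bookkeeping: \eqref{EQ:FEs}--\eqref{EQ:EFs} are exactly what let you express the $F$'s through the $E$'s in the trimming step of your injectivity argument, so without their correct (reflection-equation) derivation the completeness half of your plan has no starting point. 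With that repaired, the rest of the proposal --- including your explicit spanning/straightening argument for the abstractly presented algebra, which the paper leaves implicit and in effect replaces by Theorem \ref{T:PBWI} --- is sound and coincides with the paper's proof.
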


A subset of these generators forms a PBW basis for $Y_3^+$.
We say a triple $(i,j, r)$ of integers is {\em admissible}  if $i +j < 0$ if $r$ is odd and $i+j \leq 0$ if $r$ is even.

\begin{Theorem} \label{T:PBWI}
The set of monomials in 
\[
    \left\{E_{i}^{(m)}, D_{k,l}^{(r)}, G^{(t)} \mid i,k,l \in \{\pm 1\}, m > 0, r > 0, (k,l,r) \text{ is admissible}, t \geq 0, t \text{ is even} \right\}
\]
taken in some fixed order forms a PBW basis of $Y_3^+$.
\end{Theorem}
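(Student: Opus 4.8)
The plan is to deduce the theorem from the PBW theorem for $Y_3^+$ in its original reflection-equation presentation together with the Gauss factorisation that defines the Drinfeld generators, in the spirit of Brundan and Kleshchev's treatment of the Yangian of $\gl_n$ in \cite{BKshifted}. Recall that $Y_3^+$ is generated by the coefficients $s_{ij}^{(r)}$ ($i,j\in\{1,0,-1\}$, $r\geq0$) of its generating matrix $S(u)$, that the symmetry relation of the orthogonal twisted Yangian expresses every $s_{ij}^{(r)}$ through the subset of independent generators --- which, in the present index conventions, consists of the $s_{ij}^{(r)}$ with $(i,j,r)$ admissible, so in particular $s_{00}^{(r)}$ only for $r$ even --- and that the ordered monomials in this independent set form a basis of $Y_3^+$; see \cite{Br} and the references therein. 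Fix the canonical filtration on $Y_3^+$; then the ordered monomials in the symbols $\operatorname{gr} s_{ij}^{(r)}$ of these independent generators form a basis of $\operatorname{gr} Y_3^+$.

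I would next introduce the Drinfeld generators explicitly as the entries of the Gauss factors $S(u) = F(u)\,D(u)\,E(u)$ for the splitting of the index set into $\{\pm1\}$ and $\{0\}$: here the block-diagonal factor $D(u)$ comprises the $2\times2$ block $(D_{i,j}(u))_{i,j\in\{\pm1\}}$ and the $1\times1$ block $G(u)$, while $\tilde D(u) = (D_{i,j}(u))^{-1}$ --- which immediately yields \eqref{EQ:oD1}--\eqref{EQ:oD2} --- and $(E_i(u))_{i\in\{\pm1\}}$, $(F_i(u))_{i\in\{\pm1\}}$ are the off-diagonal parts of the block-unitriangular factors $E(u)$ and $F(u)$. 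The central lemma is that, with respect to the canonical filtration, the symbol of each Drinfeld generator is a nonzero scalar multiple of the symbol of the corresponding original generator --- $\operatorname{gr} D_{i,j}^{(r)}$ proportional to $\operatorname{gr} s_{ij}^{(r)}$, $\operatorname{gr} G^{(r)}$ to $\operatorname{gr} s_{00}^{(r)}$, and $\operatorname{gr} E_i^{(r)}, \operatorname{gr} F_i^{(r)}$ to the symbols of the corresponding off-diagonal generators --- because the quasideterminant and off-diagonal expansions of the Gauss factors differ from the bare matrix entries of $S(u)$ only by sums of products of strictly lower filtered degree.

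Granting this, the transition between $\{s_{ij}^{(r)}\}$ and the Drinfeld generators is unitriangular for the filtration, so the symmetry relations \eqref{EQ:oDs} and \eqref{EQ:FEs} (the latter expressing all the $F$'s through the $E$'s), together with \eqref{EQ:GG} and the $(0,0)$ component of the symmetry relation, reduce the full set of Drinfeld generators to exactly the admissible subset named in the theorem; and the symbols of the ordered monomials in that subset are precisely the ordered monomials in the independent $\operatorname{gr} s_{ij}^{(r)}$, which form a basis of $\operatorname{gr} Y_3^+$. Linear independence of the Drinfeld monomials in $Y_3^+$ follows at once. For the spanning statement I would first use \eqref{EQ:oD1} and the symmetry relations to eliminate the $\tilde D$'s, the non-admissible $D$'s, the $F$'s, and the odd $G$'s, and then straighten an arbitrary ordered product of the remaining generators into the prescribed order by induction on length, using the commutation relations \eqref{EQ:oDD} and \eqref{EQ:DEreln}--\eqref{EQ:FFreln}, each of which introduces only monomials of length at most that of the input. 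The main obstacle I anticipate is the bookkeeping in the central lemma --- pinning down the leading term and the exact nonzero scalar of each quasideterminant entry --- together with checking that the combinatorial admissibility condition matches, degree by degree, precisely the independent generators cut out by the twisted-Yangian symmetry relation, with no further dependence hidden among the $E$'s, $F$'s, $D$'s, and $G$'s.
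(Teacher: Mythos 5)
Your argument is correct and is essentially the paper's own proof: the paper likewise takes the MNO PBW basis of $Y_3^+$ in the admissible $S_{i,j}^{(r)}$, puts $S_{i,j}^{(r)}$ in filtration degree $r-1$, and proves exactly your ``central lemma'' (that $E_{i,0}^{(r)}$, $F_{0,i}^{(r)}$, $G^{(r)}$ agree with $S_{i,0}^{(r)}$, $S_{0,i}^{(r)}$, $S_{0,0}^{(r)}$, and $E_{i,0}^{(r)}$ with $F_{0,-i}^{(r)}$, modulo $L_{r-1}Y_3^+$, via \eqref{EQ:ES}, \eqref{EQ:FS}, \eqref{EQ:GS}, and \eqref{tyreln2}), from which the theorem follows by the same leading-term comparison. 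Your extra straightening step for spanning is not needed but does no harm.
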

The theorem also holds if the 
$E_i^{(m)}$'s 
are replaced with
$F_i^{(m)}$'s.

The key motivation in finding this presentation is to define the {\em shifted twisted Yangians}, which we hope to relate to 
certain finite $W$-algebras.
   
Let $k > 0$.
\begin{Definition}
The  {\em $k$-shifted twisted Yangian for $\so_3(\C)$}, denoted $Y_3^+(k)$,
is the subalgebra of $Y^+_3$ generated by $\{D_{i,j}^{(m)}, G^{(n)}, E^{(r)}_l \mid
    i,j,l \in \{\pm 1\}, m,n \in \Z_{\geq 0}, r > k\}$.
\end{Definition}

It is easy to see from Theorems \ref{T:main1} and \ref{T:PBWI}  that these are in fact proper subalgebras of $Y_3^+$.
In particular note that all the elements 
$E_i^{(r)}$ and$ E_j^{(r)}$
where $r \leq k$
in \eqref{EQ:EEreln} cancel, provided that $m,n > k$.

One remarkable thing about these shifted twisted Yangians are
the partial-evaluation homomorphisms defined in $\S$\ref{S:partialeval}.
We summarize these in the following theorem, see $\S$\ref{S:partialeval}
for the full definitions.

\begin{Theorem}
	For each $k  > 0$
	there exists an injective homomorphism
	$\phi_k : Y_3^+(k) \to Y_3^+(k-1)\otimes U(\gl_1(\C))$.
\end{Theorem}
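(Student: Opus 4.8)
The plan is to write down an explicit formula for $\phi_k$ on the Drinfeld generators of $Y_3^+(k)$ and then verify that it respects the defining relations, mimicking the shift homomorphism of Brundan--Kleshchev. Since $Y_3^+(k)$ is generated by the $D_{i,j}^{(m)}$, $G^{(n)}$ (all $m,n \ge 0$) together with $E_l^{(r)}$ for $r > k$, it suffices to specify the images of these. I would send each $D_{i,j}^{(m)}$, $\tilde D_{f,h}^{(s)}$, and $G^{(n)}$ to the same-named generator of $Y_3^+(k-1)$ tensored with $1 \in U(\gl_1(\C))$, and send the ``new'' top generator $E_l^{(k+1)}$ to a combination involving the generator $x$ of $U(\gl_1(\C))$ --- concretely something of the shape $E_l^{(k+1)} \mapsto E_l^{(k)} \otimes 1 + (\text{a correction built from }D_{l,a}^{(\cdot)}) \otimes x$, with the higher $E_l^{(r)}$, $r > k+1$, forced by applying the relations \eqref{EQ:DEreln} repeatedly (equivalently, using that $[D_{i,j}^{(1)}, E_k^{(r)}]$ expresses $E_k^{(r+1)}$ in terms of lower data). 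One then checks $F$ is determined through \eqref{EQ:FEs}. The first task is to pin down the exact correction term so that \eqref{EQ:oD2}, \eqref{EQ:oDs}, \eqref{EQ:FEs}, \eqref{EQ:EFs} are automatic (these only involve the $D$'s and $G$, or are internal to the $E$/$F$ pair) and so that the algebra map is well-defined.

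Next I would verify the relations in a convenient order: first the ones internal to the $D,\tilde D, G$ block --- \eqref{EQ:oD1}, \eqref{EQ:oD2}, \eqref{EQ:GG}, \eqref{EQ:oDD}, \eqref{EQ:oDs} --- which hold trivially since those generators map diagonally into the first tensor factor and $x$ is central in $U(\gl_1(\C))$. Then \eqref{EQ:DEreln}, \eqref{EQ:GEreln}: expanding both sides under $\phi_k$, the $\otimes 1$ part is the corresponding relation in $Y_3^+(k-1)$ and the $\otimes x$ part reduces to a polynomial identity in binomial coefficients coming from the $(-2)^s(-1)^{r-s}\binom{r}{s}$ weights in \eqref{EQ:DEreln}; this is where the specific normalization of the correction term gets tested. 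The relations \eqref{EQ:DFreln}, \eqref{EQ:GFreln} then follow formally from \eqref{EQ:FEs}. The genuinely delicate relations are \eqref{EQ:EFreln}, \eqref{EQ:EEreln}, \eqref{EQ:FFreln}: here the right-hand sides are quadratic in $E,F$, so under $\phi_k$ one gets terms with $x^2$, and one must check these cancel (using $[x,x]=0$ in $U(\gl_1(\C))$ is not enough --- the $x^2$ coefficients must vanish identically as elements of $Y_3^+(k-1)$, which forces another binomial/telescoping identity). The observation already flagged in the excerpt --- that in \eqref{EQ:EEreln} with $m,n > k$ all the $E_i^{(r)}$, $E_j^{(r)}$ with $r \le k$ drop out --- is exactly what makes the $x^2$-terms land in a range where they can be matched, so I would use that structural feature as the organizing principle.

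Finally, injectivity. Here I would use the PBW theorem, Theorem~\ref{T:PBWI}: $Y_3^+(k-1)\otimes U(\gl_1(\C))$ has a PBW basis given by the $Y_3^+(k-1)$ monomials (in admissible $D$'s, even $G$'s, and $E_l^{(m)}$ with $m > k-1$) times powers of $x$. Tracking the leading term of $\phi_k$ on a PBW monomial of $Y_3^+(k)$, the generator $E_l^{(k+1)}$ contributes its $\otimes 1 = E_l^{(k)}\otimes 1$ piece, and one checks that $\phi_k$ sends the PBW basis of $Y_3^+(k)$ to a ``triangular'' set with respect to a suitable filtration (degree in $x$, or loop-grading), hence to a linearly independent set. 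The main obstacle I anticipate is the very last verification in step two: getting the $x^2$-coefficients in \eqref{EQ:EFreln}, \eqref{EQ:EEreln}, \eqref{EQ:FFreln} to cancel will require a nonobvious identity among the coefficients $(-2)^s(-1)^{r-s}\binom{r}{s}$ and the $(-2)^r(-1)^{m-1-r}\binom{m-1}{r}$ appearing there, and pinning down the correction term in the definition of $\phi_k(E_l^{(k+1)})$ so that this works is where the real content lies --- everything else is bookkeeping once that normalization is correct.
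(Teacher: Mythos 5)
The proposal is an outline, not a proof, and the part you defer---``pinning down the correction term''---is precisely where your guessed normalization goes wrong. The paper's map is
\begin{equation*}
\phi_k(D_{i,j}^{(m)}) = D_{i,j}^{(m)}\otimes 1,\qquad
\phi_k(E_{i}^{(m)}) = E_{i}^{(m)}\otimes 1 + E_{i}^{(m-1)}\otimes e_{0,0},
\end{equation*}
\begin{equation*}
\phi_k(G^{(m)}) = G^{(m)}\otimes 1
-\Bigl(\sum_{l=1}^{m-1}(-2)^l G^{(m-l-1)}\Bigr)\otimes e_{0,0}
-\Bigl(\sum_{l=0}^{m-2}(-2)^l G^{(m-l-2)}\Bigr)\otimes e_{0,0}^2 .
\end{equation*}
So the correction to $E_i^{(m)}$ is not ``built from $D$'s,'' the leading term is $E_i^{(m)}\otimes 1$ rather than a degree-shifted $E_i^{(m-1)}\otimes 1$, and---most importantly---$G^{(m)}$ does \emph{not} map diagonally into the first factor. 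Your assumption that the $D,\tilde D,G$ block goes to itself $\otimes\,1$ cannot be repaired: with $G^{(m)}\mapsto G^{(m)}\otimes 1$ and any correction on the $E$'s alone, the $e_{0,0}^2$-coefficients in \eqref{EQ:EEreln} cannot balance. Indeed $[\phi_k(E_i^{(m)}),\phi_k(E_j^{(n)})]$ has, at order $e_{0,0}^2$, the commutator of the two corrections, which (applying \eqref{EQ:EEreln} once more, or whatever relation your $D$-built correction satisfies) produces $\tilde D_{j,-i}^{(s)}G^{(t)}$-type terms; but if $\tilde D$ and $G$ carry no $e_{0,0}$-corrections, the image of the right-hand side of \eqref{EQ:EEreln} has all of its $\tilde D G$ terms at order $e_{0,0}^0$, so there is nothing at order $e_{0,0}^2$ to match. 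The quadratic-in-$e_{0,0}$ correction to $G^{(m)}$ with the $(-2)^l$ coefficients is exactly what supplies those terms, and the substance of the paper's proof is the coefficient-by-coefficient verification of \eqref{EQ:DEreln}, \eqref{EQ:GEreln} and \eqref{EQ:EEreln} against these corrections (matching the $\otimes 1$, $\otimes e_{0,0}$, $\otimes e_{0,0}^2$, $\otimes e_{0,0}^3$ parts via the binomial resummations). Calling everything after the normalization ``bookkeeping'' understates this: the normalization and the verification are the theorem.

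Two smaller points. Defining the map only on $E_l^{(k+1)}$ and ``forcing'' the higher $E_l^{(r)}$ through \eqref{EQ:DEreln} presupposes a presentation of the subalgebra $Y_3^+(k)$ in terms of that smaller generating set and adds a consistency check you never address; the paper avoids this by giving the image of every generator. As for injectivity, your PBW/triangularity sketch rests on your (incorrect) leading term $E_l^{(k)}\otimes 1$; with the correct formula injectivity is immediate, since composing $\phi_k$ with the homomorphism $U(\gl_1(\C))\to\C$, $e_{0,0}\mapsto 0$, recovers the natural inclusion $Y_3^+(k)\subseteq Y_3^+(k-1)$ (or one argues with the PBW basis of Theorem \ref{T:PBWI}), so that part of your plan is fine in spirit but is built on the wrong map.
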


Fix a positive integers $k,n$,
and
let $\g = \sp_{3n+2k} (\C)$ if $n$ is even, and let $\g = \so_{3n+2k}(\C)$ if $n$ is odd.  
 Let $e \in \g$ be a nilpotent element with 
 Jordan Type $(n+2k,n,n)$, and let $U(\g,e)$ 
 denote the finite $W$-algebra associated to $e$ and $\g$ (see \cite{P1} or \cite{BGK}for the definition of finite $W$-algebras).

 The partial-evaluations homomorphisms, along with the homomorphisms $\kappa_l$
 from \cite[(1.12)]{Br}
 can be used to construct an algebra homomorphism $\phi : Y^+_3(k) \to U(\g)$ (see $\S$\ref{S:partialeval} for details).

 \begin{Conjecture}
 The algebras $\phi(Y_3^+(k))$ and $U(\g,e)$ are isomorphic.
 \end{Conjecture}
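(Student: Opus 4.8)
The plan is to prove the conjectured isomorphism $\phi(Y_3^+(k)) \iso U(\g,e)$ by establishing that $\phi$ is surjective onto $U(\g,e)$ and then comparing graded dimensions (or PBW-type bases) on both sides. First I would make explicit the composite homomorphism $\phi : Y_3^+(k) \to U(\g)$: following the discussion in $\S\ref{S:partialeval}$, one iterates the partial-evaluation maps $\phi_k, \phi_{k-1}, \dots, \phi_1$ to land in $Y_3^+(0) \otimes U(\gl_1(\C))^{\otimes k}$, then applies the map $\kappa_l$ from \cite[(1.12)]{Br} (which for the rectangular Jordan type $(n,n,n)$ sends the twisted Yangian $Y_3^+ = Y_3^+(0)$ into $U(\g')$ for the appropriate smaller classical Lie algebra $\g'$), and finally reassembles these pieces inside $U(\g)$ via the block decomposition of $\g = \sp_{3n+2k}$ or $\so_{3n+2k}$ attached to the "pyramid" for $e$ of Jordan type $(n+2k,n,n)$. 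The key structural input is that the partial-evaluation homomorphisms are compatible with the Miura-type / Kazhdan filtrations, so that $\phi$ becomes a filtered map and $\operatorname{gr}\phi$ can be analyzed.

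Next I would check that $\phi$ actually lands in the subalgebra $U(\g,e) \sub U(\g, \p)$ (the relevant quotient/subalgebra realizing the finite $W$-algebra). This amounts to verifying that the images $\phi(D_{i,j}^{(m)})$, $\phi(G^{(n)})$, $\phi(E_i^{(r)})$ satisfy the defining invariance condition of the $W$-algebra — concretely, that they lie in the $\operatorname{ad}\,\mathfrak{m}$-invariants for the nilpotent $e$ of type $(n+2k,n,n)$. For the rectangular pieces this is exactly the content of \cite{Br}; the new ingredient is that adjoining the $\gl_1$-factors via $\phi_k$ does not destroy invariance, which should follow formally because each $\phi_j$ is an algebra homomorphism and the $\gl_1$-generators map to Whittaker-invariant elements coming from the "extra" $2k$ rows/columns of the pyramid. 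Then I would show surjectivity: $U(\g,e)$ is generated by the images of certain lowest-weight-space elements under the generalized Miura map, and I would match these against $\phi$ of the Drinfeld generators $D_{i,j}^{(m)}, G^{(n)}, E_i^{(r)}$ using the explicit formulas from \cite{Br} together with the inductive structure of $\phi_k$.

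Finally, injectivity of $\phi|_{Y_3^+(k)}$ composed with the surjection gives the isomorphism once dimensions match. Here I would invoke the PBW basis of $Y_3^+(k)$ coming from Theorem \ref{T:PBWI} — restricting the admissible monomials to those involving $E_i^{(r)}$ with $r > k$ — and compare its Poincaré series in the Kazhdan grading with the known PBW basis of $U(\g,e)$ (from \cite{P1}, with $\dim U(\g,e)$ in each graded degree governed by $\dim \g^e$ and the block sizes $n+2k, n, n$). A dimension count on the associated graded level, using that $\operatorname{gr} U(\g,e) \iso S(\g^e)$, should force $\operatorname{gr}\phi$ to be an isomorphism, hence $\phi$ itself. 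The main obstacle I anticipate is the surjectivity/dimension-matching step: one must show the Drinfeld generators of $Y_3^+(k)$ that survive the shift — in particular the higher $E_i^{(r)}$, $r > k$, and all the $G^{(n)}$ — generate \emph{all} of $U(\g,e)$ and not a proper subalgebra, and that no extra relations are imposed beyond those in Theorem \ref{T:main1}; controlling this likely requires a careful filtered-degree analysis of $\operatorname{gr}\phi$ on $S(\g^e)$ and identifying the images of a $\g^e$-basis adapted to the three Jordan blocks of unequal sizes, which is genuinely more delicate than the rectangular case handled in \cite{Br}.
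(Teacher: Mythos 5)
This statement is stated in the paper as a \emph{Conjecture}: the paper offers no proof of it, so there is no argument to compare yours against, and your proposal has to stand entirely on its own. As written it is a strategy outline rather than a proof: the three decisive steps --- that the image of $\phi$ lies in (a realization of) $U(\g,e)$, that the surviving Drinfeld generators generate all of it, and that the graded comparison closes the argument --- are each deferred with ``should follow'', ``I would show'', ``likely requires''. Note also a misstatement of the setting: $U(\g,e)$ is not a subalgebra of $U(\g)$, and the paper's map $\kappa_n\circ\phi_1\circ\cdots\circ\phi_k$ lands in $U(\mathfrak{l})$ for a Levi subalgebra $\mathfrak{l}$; so your step ``check that $\phi$ lands in $U(\g,e)$ inside $U(\g)$ by verifying $\operatorname{ad}\mathfrak{m}$-invariance'' has to be rephrased as comparing $\phi(Y_3^+(k))$ with the image of $U(\g,e)$ under a Miura-type embedding into $U(\mathfrak{l})$.

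The concrete gap is in your endgame. You propose to finish by proving injectivity of $\phi$ on $Y_3^+(k)$ and matching Poincar\'e series, so that $\operatorname{gr}\phi$ is an isomorphism. This cannot work: by Theorem \ref{T:PBWI} (restricted to the shifted subalgebra), the associated graded of $Y_3^+(k)$ is a polynomial algebra on \emph{infinitely many} generators $E_i^{(r)}$ ($r>k$), $D_{i,j}^{(r)}$, $G^{(2t)}$, while $\operatorname{gr}U(\g,e)\iso S(\g^e)$ is a polynomial algebra on $\dim\g^e<\infty$ generators, so the two graded dimensions can never agree; moreover $\phi$ is certainly not injective, since it factors through the evaluation-type map $\kappa_n$ of \cite{Br}, which kills generators of large degree (only the partial-evaluation maps $\phi_j$ are injective). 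By analogy with \cite{BKshifted}, the expected statement is that $U(\g,e)$ is a \emph{truncation}, i.e.\ a quotient of $Y_3^+(k)$ by a nontrivial ideal, and the whole content of the conjecture is the identification of the image of $\phi$ (equivalently of this kernel) with the Miura image of $U(\g,e)$ --- a step your proposal acknowledges as the ``main obstacle'' but does not carry out. So the conjecture remains open; your outline points in the natural direction (filtered analysis, comparison with $S(\g^e)$, the rectangular case from \cite{Br} as input), but it is not a proof.
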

 Once proven, this conjecture will lead to a presentation of $U(\g,e)$.
 Finite $W$-algebras associated to nilpotent elements with Jordan Type $(m,n,n)$ are called three-row finite $W$-algebras.  We expect these
 to play an important role in the representation theory of large classes of finite $W$-algebras associated to more complicated nilpotent orbits.
 For example, in \cite{B2} the finite dimensional irreducible representations of rectangular finite $W$-algebras are classified.
 This classification led to the results of \cite{BG1}, which are used with the results from \cite{BG2} to classify  
 all of the finite dimensional irreducible representations
 of all type B and C finite $W$-algebras associated to standard Levi nilpotent orbits
 in \cite{BG3} and \cite{BG4}.
 We expect the classification of the irreducible representations of three row finite $W$-algebras
 will play a similar role to that of the rectangular finite $W$-algebras in 
 the study of the representation theory of more complicated finite $W$-algebras.

 Finally in $\S$\ref{S:center} we express generators of the center of $Y_3^+$ in terms of the Drinfeld generators.

\section{ Yangians and twisted Yangians}
Throughout this section
we work over the index set $\mathcal{I}_n$.
If $n$ is even, say $n=2k$, then
\[
   \mathcal{I}_n = \{-k, -k+1, \dots, -1, 1, \dots, k \}.
\]

If $n$ is odd , say $n=2k+1$, then
\[
   \mathcal{I}_n = \{-k, -k+1, \dots, -1, 0, 1, \dots, k \}.
\]

The Yangian $Y_n$ is an algebra with countably many generators
$\{T_{i,j}^{(r)}\ | i,j \in \mathcal{I}_n, r \in \Z_{>0}\}$.
Let $u,v$ be an indeterminants.
By letting
\[
  T_{i,j}(u) = \sum_{r \ge 0} T_{i,j}^{(r)}u^{-r} \in Y_n[[u^{-1}]],
\]
the relations are given by
\[
[T_{i,j}(u), T_{k,l}(v)] = \frac{1}{u-v} (T_{k,j}(u) T_{i,l}(v) - T_{k,j}(v) T_{i,l}(u)).
\]
This (and subsequent formulas) involving generating functions
should be interpreted by using that
\[
   \frac{1}{u-v} = u^{-1} \sum_{r=0}^\infty u^{-r} v^r
\]
and
equating coefficients of the indeterminants $u,v$ on both sides of equations,
where the both sides of the above equation are elements in  $Y_n[[u^{-1}, v^{-1}]]$ localized at the non-zero elements of $\C[[u^{-1}, v^{-1}]]$.

There exists a homomorphism $\operatorname{ev} : Y_n \to U(\gl_n(\C))$ defined via $\operatorname{ev}(T_{i,j}^{(r)}) = \delta_{r,1} e_{i,j}$.


By  \cite[$\S$3.4]{MNO}, there exists an 
involution $\tau:Y_n \rightarrow Y_n$
defined by 
\[
\eta(T_{i,j}(u)) = T_{-j,-i}(-u).
\]
Now $Y_n^+$, the twisted Yangian associated to $\so_n(\C)$, is defined to be the subalgebra of $Y_n$ 
generated by the elements $\{S_{i,j}^{(r)} \:|\: i,j \in \mathcal{I}_n, r \in \Z_{>0}\}$
coming from the expansion
\begin{equation}\label{siju}
  S_{i,j} (u) = \sum_{r \geq 0} S_{i,j}^{(r)} u^{-r}
= 
\sum_{k \in \mathcal{I}_n} 
    \eta(T_{i,k}(u)) T_{k,j}(u) \in Y_n[[u^{-1}]].
\end{equation}

This in turn allows us to define the $S$-matrix $S(u) \in M_{n}(Y_n^+[[u^{-1}]])$
by defined $S(u)_{i,j} = S_{i,j}(u)$, where $M_{n}(R)$ denotes the ring of $n \times n$ matrices with entries in a ring $R$
and with entries indexed by $\mathcal{I}_n$.

Let 
\[
  R(u) = u - \sum_{i,j \in \mathcal{I}_n} e_{i,j} \otimes e_{j,i} \in M_{n}(\C) \otimes M_{n}(\C)(u),
\]
and let 
\[
 R'(u) = u  - \sum_{i,j \in \mathcal{I}_n} e_{-i,-j} \otimes e_{i,j}.
\]

Let $R^{[1,2]}(u) \in M_{n}(\C) \otimes M_{n}(\C) \otimes Y_n^+[[u^{-1}]][u]$ denote the operator $R(u)$ acting in the first 2 tensor powers,
and define $R'^{[1,2]}$ similarly.
Also let $S^{[1,3]}(u)$ denote the $S$-matrix ``acting'' in the first and third tensor powers, specifically
$S^{[1,3]}(u) = \sum_{i \in \mathcal{I}_n} e_{i,j} \otimes 1 \otimes S_{i,j}(u)$.
We define $S^{[2,3]}(u)$ similarly.
Also let $S^T(u)$ denote the transpose of $S(u)$ defined by
\[
S^T(u) = \sum_{i,j \in \mathcal{I}_n} e_{-j,-i} \otimes S_{i,j}(u).
\]

Now the relations for $Y_n^+$ are induced by the equations
\begin{equation} \label{tyreln1}
   R^{[1,2]}(u-v) S^{[1,3]}(u) R'^{[1,2]}(-u-v) S^{[2,3]}(v) = S^{[2,3]}(v) R'^{[1,2]}(-u-v) S^{[1,3]}(u) R^{[1,2]}(u-v)
\end{equation}
and
\begin{equation} \label{tyreln2}
   S^T(-u) = S(u) + \frac{S(u) -S(-u)}{2 u}.
\end{equation}
with the equality from \eqref{tyreln1} occurring in 
$M_{n}(\C) \otimes M_{n}(\C) \otimes Y_n^+((u^{-1}, v^{-1}))$, 
the localization of 
$M_{n}(\C) \otimes M_{n}(\C) \otimes Y_n^+[[u^{-1}, v^{-1}]]$ at the multiplicative set consisting of the non-zero elements of
$\C[u^{-1},v^{-1}]$.

By computing the coefficient of $e_{i,j} \otimes e_{k,l}$ in \eqref{tyreln1} we get the more explicit relations
\begin{align} \label{EQ:SS}
  (u^2-v^2)[S_{i,j}(u),S_{k,l}(v)] &= (u+v)(S_{k,j}(u) S_{i,l}(v) - S_{k,j}(v) S_{i,l}(u)) \\ \notag
  & \quad - (u-v) (S_{i,-k}(u) S_{-j,l}(v) - S_{k,-i}(v) S_{-l,j}(u)) \\ \notag
  & \quad + S_{k,-i}(u) S_{-j,l}(v) - S_{k,-i}(v) S_{-j,l}(u).
\end{align}
This in turn can be used to give an explicit formula for $[S_{i,j}^{(r)},S_{k,l}^{(s)}]$ by using the expansions
\begin{align*}
\frac{1}{u-v} &= \sum_{r=0}^\infty u^{-1} (u^{-1} v^{-1})^r, \\
\frac{1}{u+v} &= \sum_{r=0}^\infty (-1)^r u^{-1} (u^{-1} v^{-1})^r, \\
\frac{1}{u^2-v^2} &= \sum_{r=0}^\infty u^{-2} (u^{-2} v^{-2})^r, \\
\end{align*}.

Let $\tilde S^{[2,3]}(v)$ denote the inverse of $S^{[2,3]}(v)$.
We obtain an alternate form of the relations by multiplying both sides of \eqref{tyreln1}  on the left and the right by $\tilde S^{[2,3]}(v)$ to obtain
\begin{equation} \label{tyrelnalt}
   \tilde S^{[2,3]}(v) R^{[1,2]}(u-v) S^{[1,3]}(u) R'^{[1,2]}(-u-v) = R'^{[1,2]}(-u-v) S^{[1,3]}(u) R^{[1,2]}(u-v)\tilde S^{[2,3]}(v).
\end{equation}

There is an important antiautomorphism of the twisted Yangian, $\tau$, defined via
\begin{equation} \label{EQ:tau}
    \tau(S_{i,j}^{(r)}) = S_{-j,-i}^{(r)}.
\end{equation}
Note that $\tau$ is simply the map induced by $S(u) \mapsto S^T(u)$.

The Yangian $Y_n^+$ is closely related to $U(\so_{n}(\C))$.
To see this
we realize $\so_{n}(\C)$ as a subalgebra of $\gl_{n}(\C)$ in the following manner.  We consider $\gl_{n}(\C)$ to have basis the matrix
units $\{e_{i,j} \mid i,j \in \mathcal{I}_n\}$. Now we realize $\so_{n}(\C)$ as the Lie subalgebra of 
$\gl_{n}(\C)$ generated by $\{f_{i,j} = e_{i,j} - e_{-j,-i} \mid i,j \in \mathcal{I}_n\}$.
\begin{Theorem}[Molev]
The map induced by $f_{i,j} \mapsto S_{i,j}^{(1)}$ defines an injective homomorphism
\[
  \iota : U(\so_{n}(\C)) \hookrightarrow Y_n^+.
\]
The map induced by $S_{i,j}^{(r)} \mapsto \delta_{r,1} f_{i,j}$ defines a surjective homomorphism
\[
  \rho : Y_n^+ \twoheadrightarrow U(\so_{n}(\C)).
\]
\end{Theorem}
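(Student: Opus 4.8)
The plan is to construct $\rho$ and $\iota$ separately as algebra homomorphisms and then to observe that the composite $\rho\circ\iota$ fixes every generator $f_{i,j}$; this single fact yields simultaneously the injectivity of $\iota$ and the surjectivity of $\rho$. Throughout I would use that $Y_n^+$ is presented by the generators $S_{i,j}^{(r)}$ subject to \eqref{tyreln1} and \eqref{tyreln2} (equivalently, the quadratic relations \eqref{EQ:SS} together with the symmetry relation), as well as the PBW theorem for $Y_n^+$.

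First I would check that $\iota$ is well defined by reading off the lowest-order consequences of the presentation. The coefficient of $u^{-1}$ in \eqref{tyreln2} gives $S_{i,j}^{(1)}=-S_{-j,-i}^{(1)}$, and the coefficient of $uv^{-1}$ in \eqref{EQ:SS}, using $S_{i,j}^{(0)}=\delta_{i,j}$, gives
\[
[S_{i,j}^{(1)},S_{k,l}^{(1)}]=\delta_{k,j}S_{i,l}^{(1)}-\delta_{i,l}S_{k,j}^{(1)}-\delta_{i,-k}S_{-j,l}^{(1)}+\delta_{j,-l}S_{k,-i}^{(1)}.
\]
These coincide with the defining relations of $U(\so_n(\C))$ presented on the generating set $\{f_{i,j}\}$ (with $f_{i,j}=-f_{-j,-i}$ and the corresponding Lie bracket), so the assignment $f_{i,j}\mapsto S_{i,j}^{(1)}$ extends to an algebra homomorphism $\iota\colon U(\so_n(\C))\to Y_n^+$.

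The substantive step is the construction of $\rho$. I would take $\rho$ to be the multiplicative extension of the assignment $S_{i,j}(u)\mapsto\delta_{i,j}+f_{i,j}u^{-1}$ on generating functions (so $\rho(S_{i,j}^{(r)})=\delta_{r,1}f_{i,j}$ as in the statement), and verify that the corresponding matrix $1+Fu^{-1}$, with $F=\sum_{i,j}e_{i,j}\otimes f_{i,j}\in M_n(\C)\otimes U(\so_n(\C))$, satisfies \eqref{tyreln1} and \eqref{tyreln2}. Because this matrix is of first order in $u^{-1}$, each relation reduces to a finite identity in $U(\so_n(\C))$: for \eqref{tyreln1} one expands \eqref{EQ:SS} and checks each coefficient using the bracket relation displayed above together with the antisymmetry of the $f_{i,j}$ (the $uv^{-1}$-coefficient reproduces the bracket relation itself, and the remaining coefficients are handled in the same spirit). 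A cleaner, less computational route to \eqref{tyreln1} is to invoke the reflection-equation presentation of $U(\so_n(\C))$, under which $1+Fu^{-1}$ is essentially the defining matrix and the relation holds by construction. Once $\rho$ is a homomorphism it is surjective, since $\{f_{i,j}\}$ generates $U(\so_n(\C))$.

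Finally, injectivity of $\iota$ becomes formal: $\rho\circ\iota$ sends each $f_{i,j}$ to $\rho(S_{i,j}^{(1)})=f_{i,j}$, so $\rho\circ\iota=\operatorname{id}_{U(\so_n(\C))}$ and $\iota$ is injective. (Independently of $\rho$ one can also argue from the PBW theorem: the ordered monomials in the $S_{i,j}^{(1)}$ form part of a PBW basis of $Y_n^+$, hence are linearly independent, and they are precisely the $\iota$-images of a PBW basis of $U(\so_n(\C))$.) I expect the main obstacle to be exactly the well definedness of $\rho$, namely the verification that $1+Fu^{-1}$ solves the reflection-type equation \eqref{tyreln1} inside $U(\so_n(\C))$ (equivalently, the quadratic relations \eqref{EQ:SS}); once this is in place, everything else follows formally from the presentation and the PBW theorem.
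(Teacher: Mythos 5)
The paper itself gives no proof of this statement (it is quoted as Molev's theorem; the relevant source is \cite{MNO}, $\S$3), so your proposal can only be measured against the standard argument, whose overall architecture you have reproduced: build an evaluation homomorphism $\rho$, observe $\rho\circ\iota=\operatorname{id}$, and/or invoke the PBW theorem. Your treatment of $\iota$ is correct: the $u^{-1}$ coefficient of \eqref{tyreln2} gives $S^{(1)}_{i,j}=-S^{(1)}_{-j,-i}$, the $uv^{-1}$ coefficient of \eqref{EQ:SS} gives the $\so_n$ bracket, and your parenthetical PBW argument does prove injectivity of $\iota$ without any reference to $\rho$.

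The gap is exactly at the step you flagged as the main obstacle: the matrix $1+Fu^{-1}$ does \emph{not} satisfy the defining relations, so your $\rho$ is not well defined as proposed, and the claim that the remaining coefficients of \eqref{EQ:SS} ``are handled in the same spirit'' is false. Substituting $S_{a,b}(w)=\delta_{a,b}+f_{a,b}w^{-1}$ into \eqref{EQ:SS}, the two terms weighted by $(u+v)$ and $-(u-v)$ reproduce precisely $(uv^{-1}-u^{-1}v)[f_{i,j},f_{k,l}]$, but the unweighted term $S_{k,-i}(u)S_{-j,l}(v)-S_{k,-i}(v)S_{-j,l}(u)$ contributes the leftover
\[
(v^{-1}-u^{-1})\bigl(\delta_{k,-i}\,f_{-j,l}-\delta_{j,-l}\,f_{k,-i}\bigr),
\]
which is nonzero (e.g.\ $i=1$, $k=-1$, $j=1$, $l=0$ forces $f_{-1,0}=0$); the symmetry relation \eqref{tyreln2} fails at order $u^{-2}$ for the same reason. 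These lower-order terms are exactly what distinguishes the reflection relation of $Y_n^+$ from the RTT relation of $Y_n$, so the appeal to a ``reflection-equation presentation of $U(\so_n(\C))$'' with $1+Fu^{-1}$ as defining matrix cannot rescue the computation. Molev's evaluation homomorphism carries a shift: $S(u)\mapsto 1+F\,(u+\tfrac12)^{-1}$, equivalently $S^{(r)}_{i,j}\mapsto(-\tfrac12)^{r-1}f_{i,j}$, and one checks that the $\tfrac12$ exactly cancels the leftover term above (and makes \eqref{tyreln2} hold). With this corrected $\rho$ your formal conclusions survive: $\rho$ is surjective and $\rho(\iota(f_{i,j}))=\rho(S^{(1)}_{i,j})=f_{i,j}$, so $\iota$ is injective. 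Note, however, that the resulting map is not the literal assignment $S^{(r)}_{i,j}\mapsto\delta_{r,1}f_{i,j}$ of the statement (that formula only records its effect on $S^{(1)}$); with the relations as written, no homomorphism realizes that literal formula, so your proof of the statement as transcribed cannot be completed without this adjustment.
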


There is also a twisted Yangian associated to $\sp_n(\C)$, however since we do not use it in this paper we are not including its definition.

\section{A Drinfeld presentation of $Y_3^+$} \label{S:IDP}
In this section we define Drinfeld generators for $Y_3^+$ and prove Theorem \ref{T:main1}.
We follow the approach of Brundan and Kleshchev in \cite[$\S$5]{BKparabpres}.

To define the Drinfeld generators, we
define the matrix $S(u)$ to have rows and columns indexed by $\{-1,1,0\}$, and $ij$ entry equal
to $S_{i,j}(u)$.  Note the unusual order of the indices.
Now $S(u)$ has a Guass factorization
\begin{equation} \label{EQ:Smatrix}
S(u) = 
\begin{pmatrix}
 1 & 0 \\ F(u) & 1
 \end{pmatrix}
\begin{pmatrix}
 D(u) & 0 \\ 0 & G(u) 
 \end{pmatrix}
\begin{pmatrix}
 1 & E(u) \\ 0 & 1
 \end{pmatrix}
\end{equation}
for some matrices $D(u), E(u), F(u)$, and $G(u)$ where
$D(u)$ is a $2 \times 2$ matrix,
$E(u)$ is an $2 \times 1$ matrix,
$F(u)$ is a $1 \times 2$ matrix,
and $G(u)$ is a $1 \times 1$ matrix.
We consider all of these matrices to inherit their indices from $S(u)$, so eg
\[
  E(u) = \begin{pmatrix} E_{-1}(u) \\ E_{1}(u) \end{pmatrix}.
\]
We let $\tilde D(u)$ and $\tilde G(u)$ denote the inverses of 
$D(u)$ and $G(u)$ respectively.

Explicit formulas for $D(u), E(u), F(u)$, and $G(u)$ are now given by
\begin{equation} \label{EQ:DS}
D(u) = 
\begin{pmatrix}
S_{-1,-1}(u) & S_{-1,1}(u) \\
S_{1,-1}(u) & S_{1,1}(u)
\end{pmatrix},
\end{equation}
\begin{equation} \label{EQ:ES}
E(u) = \tilde D (u) 
\begin{pmatrix} S_{-1,0}(u)  \\ S_{1,0}(u) \end{pmatrix},
\end{equation}
\begin{equation} \label{EQ:FS}
F(u) =  
\begin{pmatrix} S_{0,-1}(u)  & S_{0,1}(u) \end{pmatrix}
\tilde D(u),
\end{equation}
and
\begin{equation} \label{EQ:GS}
 G(u) = S_{0,0}(u) - 
\begin{pmatrix} S_{0,-1}(u)  & S_{0,1}(u) \end{pmatrix}
 \tilde D(u)
\begin{pmatrix} S_{-1,0}(u)  \\ S_{1,0}(u) \end{pmatrix}.
\end{equation}

\begin{Theorem} \label{T:1}
   The following hold for all $i,j,k,l \in \{\pm 1\}$.
   \begin{enumerate}
   \item
	\label{EQ:D}
\begin{align*} 
	[D_{i,j}(u),D_{k,l}(v)] &= \frac{1}{u-v}(D_{k,j}(u) D_{i,l}(v) - D_{k,j}(v) D_{i,l}(u)) \\
	\notag & \quad - \frac{1}{u+v }(D_{i,-k}(u) D_{-j,l}(v) - D_{k,-i}(v) D_{-l,j}(u))   \\
	\notag  & \quad + \frac{1}{u^2-v^2} D_{k,-i}(u) D_{-j,l}(v) - D_{k,-i}(v) D_{-j,l}(u),
\end{align*}

    \item
	\label{EQ:DE}
\begin{align*} 
   [D_{i,j}(u), E_{k,0}(v)] &= 
      \frac{\delta_{j,k}}{u-v} D_{i,a}(u)(E_{a,0}(v)- E_{a,0}(u)) \\
\notag       & \quad + \frac{\delta_{i,-k}}{u+v+2} ( F_{0,a}(u) - E_{-a,0}(v))D_{a,j}(u),
\end{align*}

\item
	\label{EQ:FDI}
\begin{align*} 
   [D_{i,j}(u),
   F_{0,k}(v)]
   & = 
       -\frac{\delta_{i,k}}{u-v} (F_{a,0}(v)- F_{a,0}(u))D_{a,j}(u) \\ \notag
       & \quad - \frac{\delta_{j,-k}}{u+v+2} D_{i,a}(u) ( E_{a,0}(u) - F_{0,-a}(v)) 
\end{align*}
\item
	\label{EQ:GEI}
\begin{align*} 
     [G(u), E_{i,0}(v)] &=  \frac{1}{u-v}G(u) (E_{i,0}(u) - E_{i,0}(v)) \\ \notag
     &\quad + \frac{1}{u+v+2} (E_{i,0}(v) -F_{0,-i}(u))G(u), 
     \end{align*}

\item
	\label{EQ:GFactual} \label{EQ:GFI}
\begin{align*} 
     [F_{0,i}(u), G(v)] &=  \frac{1}{u-v} (F_{0,i}(u) - F_{0,i}(v))G(v)  \\
     &\quad + \frac{1}{u+v+2} G(v) (F_{0,i}(u) -E_{-i,0}(v)),
     \end{align*}

\item
	\label{EQ:EFI}
\begin{align*} 
      [E_{i,0}(u), F_{0,j}(v)] &= 
          \frac{1}{u-v} ( \tilde D_{i,j}(u)G(u) - G(v) \tilde D_{i,j}(v) ) \\ \notag
          &+\frac{1}{u+v+2} (E_{i,0}(u) - F_{0,-i}(v))(E_{-j,0}(u) - F_{0,j}(v)),
\end{align*}

\item
	\label{EQ:EEI}
\begin{align*} 
 [E_{i,0}(u),E_{j,0}(v)] & =
      \frac{1}{u-v} (E_{i,0}(u) - E_{i,0}(v))(E_{j,0}(u) - E_{j,0}(v)) \\ \notag
      &\qquad
         + \frac{1}{u+v+2} \tilde D_{j,-i}(u) G(u) \\ \notag
      &\qquad - \frac{u-v}{(u-v-1)(u+v+2)} \tilde D_{j,-i}(v) G(v) \\ \notag
      &\qquad + \frac{1}{(u-v-1)(u+v+2)} \tilde D_{i,-j}(v) G(v) \\ \notag
      &\qquad
         - \frac{1}{(u-v-1)(2v+3)} (\tilde D_{i,-j}(v) G(v) - \tilde D_{j,-i}(v) G(v)),
\end{align*}
\item
	\label{EQ:FFI}
\begin{align*} 
 [F_{0,i}(u),F_{0,j}(v)] & =
      -\frac{1}{u-v} (F_{0,j}(u) - F_{0,j}(v))(F_{0,i}(u) - F_{0,i}(v)) \\ \notag
      &\qquad 
         - \frac{1}{u+v+2} \tilde D_{-i,j}(u) G(u) \\ \notag
      &\qquad + \frac{u-v}{(u-v-1)(u+v+2)} \tilde D_{-i,j}(v) G(v) \\ \notag
      &\qquad - \frac{1}{(u-v-1)(u+v+2)} \tilde D_{-j,i}(v) G(v) \\ \notag
      &\qquad
         + \frac{1}{(u-v-1)(2v+3)} (\tilde D_{-j,i}(v) G(v) - \tilde D_{-i,j}(v) G(v)),
\end{align*}
\item
	\label{EQ:DsymI}
\begin{equation*} 
D_{i,j}(u) = D_{-j,-i}(-u) + \frac{1}{2u} \left(D_{-j,-i}(u) - D_{-j,-i}(-u)\right),
\end{equation*}

\item
	\label{EQ:EFsymm}
\begin{equation*} 
  E_i(-u) = F_{-i}(u-2) 
\end{equation*}

\item
	\label{EQ:FEsymm}
	\[
  F_i(-u) = E_{-i}(u-2)
  \]

\end{enumerate}

\end{Theorem}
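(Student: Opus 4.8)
The plan is to follow the Gauss-decomposition strategy of Brundan and Kleshchev \cite[\S5]{BKparabpres}, specialised to the single block pattern $\{-1,1\}\sqcup\{0\}$ of $\mathcal I_3$. All eleven identities are to be deduced from exactly three inputs: the explicit relations \eqref{EQ:SS} among the series $S_{i,j}(u)$, the symmetry relation \eqref{tyreln2}, and the quasi-determinant formulas \eqref{EQ:DS}--\eqref{EQ:GS} expressing $D(u),E(u),F(u),G(u)$ in terms of the $S_{i,j}(u)$; all manipulations take place in the localisation of $Y_3^+[[u^{-1},v^{-1}]]$ introduced after \eqref{tyreln1}. Granting these, the coefficient forms \eqref{EQ:oDD}--\eqref{EQ:FFreln} needed for Theorem \ref{T:main1} follow routinely by the geometric-series expansions listed after \eqref{EQ:SS}.

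First I would handle the ``corner'' and symmetry identities. Since $D_{i,j}(u)=S_{i,j}(u)$ for $i,j\in\{\pm1\}$ and the right-hand side of \eqref{EQ:SS} stays within the index set $\{\pm1\}$ whenever $i,j,k,l\in\{\pm1\}$, the $DD$ relation \eqref{EQ:D} is precisely \eqref{EQ:SS} divided by $u^2-v^2$; the $D$-symmetry \eqref{EQ:DsymI} comes from restricting \eqref{tyreln2} to the $\{\pm1\}$-block and solving the resulting two-by-two linear system in the series $S_{\bullet,\bullet}(u)$ and $S_{\bullet,\bullet}(-u)$. From $D(u)\tilde D(u)=1$ one then extracts, by the standard device $[\tilde D,X]=-\tilde D[D,X]\tilde D$, the commutation relations among the $\tilde D_{i,j}(u)$ and a symmetry relation for $\tilde D(u)$. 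The symmetry identities \eqref{EQ:EFsymm} and \eqref{EQ:FEsymm} I would obtain by applying the matrix transpose $S\mapsto S^T$ to the Gauss factorisation \eqref{EQ:Smatrix}: this interchanges the lower- and upper-unitriangular factors (hence the roles of $E$ and $F$) and negates indices, so by uniqueness of the factorisation the Gauss factors of $S^T(-u)$ are read off from those of $S(-u)$; comparing this with what \eqref{tyreln2} gives for $S^T(-u)$, and using the $\tilde D$-symmetry, forces \eqref{EQ:EFsymm}, \eqref{EQ:FEsymm} together with a companion (unstated) symmetry of $G(u)$. It is here, in the interaction of the $\tilde D$-symmetry with the $0$-row and $0$-column parts of \eqref{tyreln2}, that the shift $u\mapsto u-2$ — equivalently the denominators $u+v+2=u+v+(n-1)$ — first appears.

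Next come the ``first-order'' identities. Writing $E_i(u)=\sum_a\tilde D_{i,a}(u)S_{a,0}(u)$ and $F_i(u)=\sum_a S_{0,a}(u)\tilde D_{a,i}(u)$, and using the Schur-complement formula \eqref{EQ:GS} for $G(u)$, I would compute the brackets $[D_{i,j}(u),E_{k,0}(v)]$, $[D_{i,j}(u),F_{0,k}(v)]$, $[G(u),E_{i,0}(v)]$, $[F_{0,i}(u),G(v)]$ directly from \eqref{EQ:SS} (now with one index equal to $0$) together with the $\tilde D$-relations just obtained; since these are linear in $E/F/G$, the computations stay bounded. The $EF$ identity \eqref{EQ:EFI} is handled the same way: the $\tilde D_{i,j}G$ terms come from the Schur complement \eqref{EQ:GS}, while the ``$EF$'' and ``$FF$'' correction terms come from the twisted ($R'$-)part of \eqref{EQ:SS}.

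The main obstacle is the pair \eqref{EQ:EEI}, \eqref{EQ:FFI}. One route is to compute $[E_{i,0}(u),E_{j,0}(v)]$ head-on from $E_i(u)=\sum_a\tilde D_{i,a}(u)S_{a,0}(u)$: the twist makes the two components $E_{\pm1}$ mix, and \eqref{tyreln2} must be invoked iteratively to close the computation, which is exactly what produces the composite denominators $(u-v-1)$ and $(2v+3)$. A shorter route is to substitute $v\mapsto-v-2$ in the already-proved $EF$ identity \eqref{EQ:EFI}, rewrite each $F_{0,j}(-v-2)$ as an $E$ via \eqref{EQ:EFsymm} and \eqref{EQ:FEsymm}, and convert the resulting $\tilde D(-v-2)G(-v-2)$ back to argument $v$ using the symmetry relations from the first step; collecting terms yields \eqref{EQ:EEI}, and then applying the antiautomorphism $\tau$ of \eqref{EQ:tau}, which interchanges $E$ and $F$, yields \eqref{EQ:FFI}. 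Whichever route is taken, the part I expect to be hard is nailing down the exact rational-function coefficients that emerge from the interplay between the quasi-determinant (Schur complement) and the symmetry relation \eqref{tyreln2} — this is the real bookkeeping burden of the proof.
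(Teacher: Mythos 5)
Your overall architecture (Gauss factorisation, the easy $DD$/$DE$/$DF$/$GE$/$GF$/$EF$ relations first, then the hard $EE$/$FF$ pair, with $\tau$ transferring $E$-identities to $F$-identities) is the same as the paper's, but the step you use to obtain the symmetry relations \eqref{EQ:EFsymm} and \eqref{EQ:FEsymm} does not work, and those relations are load-bearing for your treatment of \eqref{EQ:EEI}. The right-hand side of \eqref{tyreln2} is $S(u)+\tfrac{1}{2u}(S(u)-S(-u))$, a \emph{sum} of matrices, so uniqueness of the Gauss factorisation cannot be applied to it; and since \eqref{tyreln2} only involves the arguments $u$ and $-u$ linearly, no amount of comparing $0$-row and $0$-column entries of it (with or without the $\tilde D$-symmetry) can manufacture the shifted argument $u-2$. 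In the paper the shift does not come from \eqref{tyreln2} at all: one first proves the $[F,G]$ relation (Theorem \ref{T:1}\eqref{EQ:GFI}), multiplies it by $u+v+2$ and by $\tilde G(v)$, and then specialises to the locus $u+v+2=0$ (substitute $v\mapsto u-2$, $u\mapsto -u$); on that locus the commutator term and the $\tfrac{u+v+2}{u-v}$ term vanish, leaving exactly $F_i(-u)=E_{-i}(u-2)$, and $\tau$ gives \eqref{EQ:EFsymm}. So the argument shift is a degeneration of the quaternary relation \eqref{tyreln1} (the source of the $u+v+2$ denominators), not a consequence of the linear symmetry.

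Because of this, your ``shorter route'' to \eqref{EQ:EEI} --- substitute $v\mapsto -v-2$ into \eqref{EQ:EFI} and turn the $F$'s into $E$'s --- is not available as described: besides the unproved symmetry relations, matching the result against \eqref{EQ:EEI} requires shifted identities expressing $G(-v-2)\tilde D_{i,j}(-v-2)$, and comparing $\tilde D_{i,j}(u)G(u)$ with $\tilde D_{-j,-i}(u)G(u)$, in terms of the $\tilde D(v)G(v)$ terms carrying the $(u-v-1)$ and $(2v+3)$ denominators; these do not follow from \eqref{EQ:DsymI} or \eqref{tyreln2} and are essentially equivalent to the hard content of \eqref{EQ:EEI} itself. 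The paper instead proves \eqref{EQ:EEI} head-on, and it does so not from \eqref{EQ:SS} alone but from the mixed relations \eqref{EQ:relhelp} between $S(u)$ and the inverse matrix $\tilde S(v)$ (obtained from \eqref{tyrelnalt}), combined with the explicit block formula \eqref{EQ:Smatrix3} for $\tilde S$; this is what makes $\tilde G$, $E\tilde G$ and $\tilde G F$ directly commutable, whereas your plan of handling the quasideterminant entries with only \eqref{EQ:SS} and $[\tilde D,X]=-\tilde D[D,X]\tilde D$ leaves $G$ and $\tilde G$ essentially intractable. Finally, the denominators $u-v-1$ and $2v+3$ are not produced by ``iterating \eqref{tyreln2}'': they arise when one commutes $E_{j,0}(v)$ past the other $E$-series, solves the resulting linear equation for the commutator (producing $u-v-1$), and then sets $u=v+1$ to evaluate $[E_{i,0}(v),E_{j,0}(v)]$ (producing $2v+3$). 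Your first-step and first-order items (\eqref{EQ:D}, \eqref{EQ:DsymI}, and, with the caveat about using the $\tilde S$-relations, \eqref{EQ:DE}--\eqref{EQ:EFI}) are fine in outline, but items \eqref{EQ:EEI}, \eqref{EQ:FFI}, \eqref{EQ:EFsymm} and \eqref{EQ:FEsymm} are not established by the proposal.
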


The proof of this theorem is rather long and technical, so we delay it until $\S$\ref{S:proof}

From Theorem \ref{T:1} we can deduce the relations in Theorem \ref{T:main1}
by computing the coefficients of $u^{-r} v^{-s}$ on both sides of the equations in Theorem \ref{T:main1}.
The following power series expansions will help:
\[
   \frac{1}{u-v} = \sum_{k \geq 0} u^{-(k+1)} v^k,
\]
\[
   \frac{1}{u+v} = \sum_{k \geq 0} (-1)^k u^{-(k+1)} v^k,
\]
and
\begin{equation} \label{EQ:uv2}
   \frac{1}{u+v+2} 
   = \sum_{k \geq 0} (-2)^k(u+v)^{-(k+1)}
   = \sum_{k \geq 0} (-2)^k \left(\sum_{l \geq 0} (-1)^l u^{-(l+1)} v^l \right)^{k+1}.
\end{equation}
It will also be useful to recall 
the classical formula
\begin{equation} \label{EQ:xcoeff}
\left(\sum_{r=0}^\infty x^r \right)^k
=
\sum_{n=0}^\infty \binom{n+k-1}{k-1} x^n.
\end{equation}

We demonstrate how two of the relations in Theorem \ref{T:main1}
follow from Theorem \ref{T:1}, and leave the rest to the reader.
\subsection{Proof of relation \eqref{EQ:FEs}}

\begin{proof}
Substituting $-u$ for $u$ in Theorem \ref{T:1} \eqref{EQ:FEsymm} gives
\[
 F_i(u) = E_{-i} (-(u+2)) = 
 \sum_{r=1}^\infty (-1)^r E_{-i}^r (u+2)^{-r} =
 \sum_{r=1}^\infty (-1)^r E_{-i}^r \left(\sum_{s=0}^\infty (-2)^s u^{-(s+1)} \right)^r.
\]
Now by examining \eqref{EQ:xcoeff}, and looking at the $u^{-m}$ coefficient on both sides of this we get
\[
   F_i^{(m)} = \sum_{r=1}^m (-1)^r (-2)^{m-r} \binom{ m-1}{r-1}  E_{-i}^{(r)}.
\]

\end{proof}

\subsection{Proof of relation \eqref{EQ:DEreln}}
\begin{proof}
	We calculate the coefficients of $u^{-m} v^{-n}$ on both sides
	of Theorem \ref{T:1} \eqref{EQ:DE}.
	First we will calculate the coefficient of $u^{-m} v^{-n}$ in
	\[
      \frac{\delta_{j,k}}{u-v} D_{i,a}(u)(E_{a,0}(v)- E_{a,0}(u)) 
   \]
   Since
   \[ \frac{1}{u-v} = \sum_{k \geq 0}  u^{-(k+1)} v^k
   \]
   had only positive powers of $v$, the 
   \[
	   -\frac{\delta_{j,k}}{u-v}D_{i,a}(u) E_{a,0}(u)  
   \]
   term of this contributes nothing to the $u^{-m} v^{-n}$ coefficient.  
   So we need to only calculate the $u^{-m} v^{-n}$ coefficient of
   \[
      \frac{\delta_{j,k}}{u-v} D_{i,a}(u)E_{a,0}(v).
      \]
      Note that the $u^{-m}$ coefficient of this is
     \[
	     \delta_{j,k} \left(\sum_{r=0}^{m-1} D_{i,a}^{(m-r-1)} v^r\right) E_{a,0}(v),
     \]
     so the $u^{-m} v^{-n}$ coefficient is
     \[
	     \delta_{j,k} \sum_{r=0}^{m-1} D_{i,a}^{(m-r-1)} E_{a,0}^{(n+r)} 
	     = 
	     \delta_{j,k} \sum_{r=0}^{m-1} D_{i,a}^{(r)} E_{a,0}^{(m+n-r-1)}.
     \]

   Now we need to calculate the $u^{-m} v^{-n}$ coefficient of
   \[
 \frac{\delta_{i,-k}}{u+v+2} ( F_{0,a}(u) - E_{-a,0}(v))D_{a,j}(u).
 \]
 Again our expansion of $\tfrac{1}{u+v+2}$ from \eqref{EQ:uv2} has only
 positive powers of $v$, so the 
   \[
 \frac{\delta_{i,-k}}{u+v+2}  F_{0,a}(u)D_{a,j}(u).
 \]
 term contributes nothing to the coefficient of $u^{-m} v^{-n}$,
 so we only need to calculate the $u^{-m} v^{-n}$ coefficient of
   \[
  -\frac{\delta_{i,-k}}{u+v+2}  E_{-a,0}(v)D_{a,j}(u).
 \]
 Using \eqref{EQ:uv2} and \eqref{EQ:xcoeff} we calculate the the $u^{-r}$ coefficient of
 $\tfrac{1}{u+v+2}$ is 
 \[  
	 \sum_{s=0}^{r-1} (-2)^s (-1)^{r-s-1} \begin{pmatrix} r-1 \\s \end{pmatrix} v^{r-s-1},
 \]
 thus the $u^{-m} v^{-n}$ coefficient of 
   \[
  -\frac{\delta_{i,-k}}{u+v+2}  E_{-a,0}(v)D_{a,j}(u).
 \]
 is
 \begin{align*}
	 -\delta_{i,-k} & \sum_{r=1}^m \sum_{s=0}^{r-1} (-2)^s (-1)^{r-s-1} \begin{pmatrix} r-1 \\s \end{pmatrix} E_{-a,0}^{(n+r-s-1)} D_{a,j}^{(m-r)}
		 = \\
 & \qquad \qquad
		 -\delta_{i,-k} \sum_{r=0}^{m-1} \sum_{s=0}^{r} (-2)^s (-1)^{r-s} \begin{pmatrix} r \\s \end{pmatrix} E_{-a,0}^{(n+r-s)} D_{a,j}^{(m-r-1)}.
		 \end{align*}

\end{proof}

\subsection{A PBW basis in terms of Drinfeld generators}
In order to prove Theorem \ref{T:PBWI} we need to recall the PBW basis for $Y_3^+$ from \cite[Remark 3.14]{MNO}:
Monomials in
$
   \{ S_{i,j}^{(r)} \mid (i,j,r) \text{ is admissible} \}
$
form a basis of $Y_3^+$, where 
$(i,j,r)$ is admissible if $i+j < 0$ if $r$ is odd and $i+j \leq 0$ if $r$ is even.

There is a filtration on $Y_3^+$ formed by declaring that $S_{i,j}^{(r)}$ is in degree $r-1$.
Let 
\[
L_r Y_3^+ = \{ x \in Y_3^+ \mid \text{ the degree of $x$ is at most $r$}\}
\]
\begin{Lemma}
For all $i \in \{\pm 1\}$ and $r >0$
\[
      E_{i,0}^{(r)} \equiv S_{i,0}^{(r)} \mod L_{r-1} Y_3^+,
\]
\[
      F_{0,i}^{(r)} \equiv S_{0,i}^{(r)} \mod L_{r-1} Y_3^+,
\]
\[
      G^{(r)} \equiv S_{0,0}^{(r)} \mod L_{r-1} Y_3^+,
\]
and
\[
 E_{i,0}^{(r)} \equiv F_{0,-i}^{(r)} \mod L_{r-1} Y_3^+.
 \]
\end{Lemma}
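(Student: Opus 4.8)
The plan is to read the congruences directly off the Gauss factorization \eqref{EQ:Smatrix} together with the explicit formulas \eqref{EQ:DS}--\eqref{EQ:GS}, by tracking the filtration degree of each term. First I would record the base case: from \eqref{EQ:DS} and $S_{i,j}^{(0)} = \delta_{i,j}$ we get $D_{i,j}^{(0)} = \delta_{i,j}$, hence $D(u) = 1 + (\text{terms of degree} \ge 0)$ and likewise $\tilde D(u) = 1 + (\text{terms of degree} \ge 0)$; here ``degree $\ge 0$'' means the coefficient of $u^{-r}$ lies in $L_{r-1}Y_3^+$ for $r \ge 1$. The key bookkeeping fact is that the filtration is multiplicative ($L_p Y_3^+ \cdot L_q Y_3^+ \subseteq L_{p+q} Y_3^+$) and that $S_{i,j}^{(r)} \in L_{r-1} Y_3^+$ by definition, with $S_{i,j}^{(r)}$ actually spanning $L_{r-1}/L_{r-2}$ in the appropriate sense.

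Next I would handle each of the four congruences in turn. For $G^{(r)}$: in \eqref{EQ:GS} the correction term $\begin{pmatrix} S_{0,-1}(u) & S_{0,1}(u)\end{pmatrix}\tilde D(u)\begin{pmatrix} S_{-1,0}(u) \\ S_{1,0}(u)\end{pmatrix}$ is a product of a series starting in degree $0$ (each $S_{0,\pm 1}(u)$ has $u^{-1}$-coefficient $S_{0,\pm1}^{(1)} \in L_0$, and higher coefficients in higher filtration pieces — wait, more precisely each factor is a power series whose $u^{-1}$ coefficient sits in $L_0$), so the product's $u^{-r}$ coefficient lands in $L_{r-1}Y_3^+$ with the ``extra'' degree making it actually fall into $L_{r-2}$; hence $G^{(r)} \equiv S_{0,0}^{(r)} \bmod L_{r-1}Y_3^+$. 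For $E_{i,0}^{(r)}$: from \eqref{EQ:ES}, $E(u) = \tilde D(u)\begin{pmatrix} S_{-1,0}(u) \\ S_{1,0}(u)\end{pmatrix}$, and since $\tilde D(u) = 1 + (\text{degree} \ge 0)$, the difference $E(u) - \begin{pmatrix} S_{-1,0}(u) \\ S_{1,0}(u)\end{pmatrix}$ has $u^{-r}$ coefficient in $L_{r-2}Y_3^+$, giving $E_{i,0}^{(r)} \equiv S_{i,0}^{(r)} \bmod L_{r-1}Y_3^+$; the $F_{0,i}^{(r)}$ case is identical using \eqref{EQ:FS}.

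The fourth congruence $E_{i,0}^{(r)} \equiv F_{0,-i}^{(r)} \bmod L_{r-1}Y_3^+$ is the one I expect to require a genuine (small) input rather than pure degree-counting: by the first three parts it suffices to show $S_{i,0}^{(r)} \equiv S_{0,-i}^{(r)} \bmod L_{r-1}Y_3^+$. This should follow from the symmetry relation \eqref{tyreln2}, $S^T(-u) = S(u) + \tfrac{S(u)-S(-u)}{2u}$: comparing the $u^{-r}$ coefficients of the $(i,0)$ entry gives $(-1)^r S_{0,-i}^{(r)} = S_{i,0}^{(r)} + \tfrac12(S_{i,0}^{(r-1)} - (-1)^{r-1}S_{i,0}^{(r-1)})$, and the correction term $S_{i,0}^{(r-1)} \in L_{r-2}Y_3^+ \subseteq L_{r-1}Y_3^+$. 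This leaves $(-1)^r S_{0,-i}^{(r)} \equiv S_{i,0}^{(r)}$, so to finish I would need to absorb the sign $(-1)^r$ — which I would do by noting that at the level of the associated graded the relevant relation (or a direct check against the commutation relation \eqref{EQ:SS}, or Molev's description of $\operatorname{gr} Y_3^+$) forces $S_{i,0}^{(r)} \equiv (-1)^r S_{0,-i}^{(r)}$ anyway, or alternatively by re-examining \eqref{EQ:EFsymm}--\eqref{EQ:FEsymm}, which at top filtration degree say precisely $E_i^{(r)} \equiv (-1)^r F_{-i}^{(r)}$ while the Lemma's statement is mod $L_{r-1}$; so the sign discrepancy is exactly the point to verify carefully. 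The main obstacle, then, is not any long computation but pinning down this sign and confirming it is consistent with the admissibility convention $(i,j,r)$ used in the PBW basis — i.e. making sure the Lemma as stated (without a sign) is what is actually needed downstream in the proof of Theorem \ref{T:PBWI}, and reconciling it with the $(-1)^r$ appearing in \eqref{EQ:EFsymm}.
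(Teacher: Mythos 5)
Your approach is the same as the paper's: the paper's entire proof is the one-line observation that the lemma follows from \eqref{EQ:ES}, \eqref{EQ:FS}, \eqref{EQ:GS} and \eqref{tyreln2}, and your degree-counting via $\tilde D(u)=1+(\text{lower order})$ is exactly the content of that observation, so the first three congruences are handled correctly. One caveat on the bookkeeping: with the filtration as literally defined ($S_{i,j}^{(r)}$ placed in degree $r-1$), both sides of each congruence already lie in $L_{r-1}Y_3^+$, so ``mod $L_{r-1}$'' has to be read as ``modulo strictly lower filtration degree'' (equivalently, put $S_{i,j}^{(r)}$ in degree $r$); your computation shows each difference sits one filtration step below the leading term, which is what the proof of Theorem \ref{T:PBWI} actually needs.

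On the fourth congruence your suspicion is correct, and you should not try to ``absorb'' the sign. Comparing $u^{-r}$-coefficients of the $(i,0)$ entry of \eqref{tyreln2} gives $(-1)^r S_{0,-i}^{(r)} = S_{i,0}^{(r)} + \tfrac12\bigl(1-(-1)^{r-1}\bigr)S_{i,0}^{(r-1)}$, and combining this with the first two parts yields $E_{i,0}^{(r)}\equiv(-1)^r F_{0,-i}^{(r)}$ modulo lower order --- the same sign that \eqref{EQ:FEs} and \eqref{EQ:FEsymm} produce at top degree. For odd $r$ the unsigned statement fails at leading order (the two sides differ by $2S_{i,0}^{(r)}$ up to lower terms), so the lemma as printed is accurate only under the vacuous literal reading; the correct nontrivial statement carries the factor $(-1)^r$. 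Your proposed repair (``the associated graded relation forces $S_{i,0}^{(r)}\equiv(-1)^r S_{0,-i}^{(r)}$ anyway'') is circular: it restates the signed identity rather than eliminating the sign. The reassuring point, which you raise yourself, is that the sign is immaterial downstream: Theorem \ref{T:PBWI} and the remark that the $E_i^{(m)}$'s may be replaced by $F_i^{(m)}$'s only require the two families to agree up to a nonzero scalar modulo lower filtration degree, so the signed version of the fourth congruence is all that is ever used.
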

\begin{proof}
 This follows immediately from \eqref{EQ:ES}, \eqref{EQ:FS}, \eqref{EQ:GS}, and \eqref{tyreln2}.
\end{proof}
Now Theorem \ref{T:PBWI} follows immediately from this lemma and the definition of the usual PBW basis for $Y_3^+$ given above.

\section{Proof of Theorem \ref{T:1}} \label{S:proof}
Recall the alternate form of the twisted Yangian relations from \eqref{tyrelnalt}:
\begin{equation} \label{tyrelnalt2}
   \tilde S^{[2,3]}(v) R^{[1,2]}(u-v) S^{[1,3]}(u) R'^{[1,2]}(-u-v) = R'^{[1,2]}(-u-v) S^{[1,3]}(u) R^{[1,2]}(u-v)\tilde S^{[2,3]}(v),
\end{equation}
where $\tilde S(u)$ denotes the inverse of $S(u)$.

By calculating the coefficient  of $e_{i,j} \otimes e_{k,l}$ on both sides of this equation, we get
that for all $i,j,k,l \in \mathcal{I}$
\begin{align} \label{EQ:relhelp}
    &(v^2-u^2) \tilde S_{k,l}(v) S_{i,j}(u) -  \delta_{j,-l} (u-v)\tilde S_{k,a}(v) S_{i,-a}(u)  \\
\notag &\qquad    + \delta_{i,l} (u+v) \tilde S_{k,a}(v) S_{a,j}(u) + \delta_{j,-l} \tilde S_{k,a}(v) S_{a,-i}(u) \\
\notag     & = (v^2-u^2) S_{i,j}(u) \tilde S_{k,l}(v) -  \delta_{i,-k} (u-v) S_{-a,j}(u) \tilde S_{a,l}(v)  \\
\notag &\qquad    + \delta_{j,k} (u+v) S_{i,a}(u) \tilde S_{a,l}(v) + \delta_{i,-k} S_{-j,a}(u) \tilde S_{a,l}(v)
\end{align}
where we interpret each occurrence of $a$ to be summing over $\{-1,0,1\}$.

Next by multiplying out the righthand side of \eqref{EQ:Smatrix}, we get
\begin{equation} \label{EQ:Smatrix2}
   S(u) = \begin{pmatrix}
             D(u) & D(u) E(u) \\ F(u) D(u) & F(u) D(u) E(u) + G(u)
          \end{pmatrix}
\end{equation}
This immediately gives that $D_{i,j}(u) = S_{i,j}(u)$ for all $i,j \in \{\pm 1\}$, which implies Theorem \ref{T:1} \eqref{EQ:D}.

One can now verify that
\begin{equation} \label{EQ:Smatrix3}
   \tilde S(u) = \begin{pmatrix}
             \tilde D(u) + E(u) \tilde G(u) F(u) & -E(u) \tilde G (u) \\ -\tilde G(u) F(u)  & \tilde G(u)
          \end{pmatrix}
\end{equation}
gives an explicit formula of $\tilde S(u)$.

\subsection{Proof of Theorem \ref{T:1} \eqref{EQ:DE} and \eqref{EQ:FDI}}
\begin{proof}
Assuming that $i,j,k \in \{\pm 1\}$ and that $l=0$ in \eqref{EQ:relhelp}, we get
\begin{align*} 
    &(v^2-u^2) \tilde S_{k,0}(v) S_{i,j}(u)  \\
\notag     & = (v^2-u^2) S_{i,j}(u) \tilde S_{k,0}(v) -  \delta_{i,-k} (u-v) S_{-a,j}(u) \tilde S_{a,0}(v)  \\
\notag &\qquad    + \delta_{j,k} (u+v) S_{i,a}(u) \tilde S_{a,0}(v) + \delta_{i,-k} S_{-j,a}(u) \tilde S_{a,0}(v),
\end{align*}
which is equivalent to
\begin{align*}
    &(u^2-v^2)
[S_{i,j}(u), \tilde S_{k,0}(v)]  =  \\
&\qquad -\delta_{i,-k} (u-v) S_{-a,j}(u) \tilde S_{a,0}(v)  
    + \delta_{j,k} (u+v) S_{i,a}(u) \tilde S_{a,0}(v) + \delta_{i,-k} S_{-j,a}(u) \tilde S_{a,0}(v).
\end{align*}
Now if we only sum over $a=\pm 1$ we get
\begin{align*}
    &(u^2-v^2)
[S_{i,j}(u), \tilde S_{k,0}(v)]  =  \\
&\qquad -\delta_{i,-k} (u-v) S_{0,j}(u) \tilde S_{0,0}(v)  
    + \delta_{j,k} (u+v) S_{i,0}(u) \tilde S_{0,0}(v) + \delta_{i,-k} S_{-j,0}(u) \tilde S_{0,0}(v) \\
&\qquad -\delta_{i,-k} (u-v) S_{-a,j}(u) \tilde S_{a,0}(v)  
    + \delta_{j,k} (u+v) S_{i,a}(u) \tilde S_{a,0}(v) + \delta_{i,-k} S_{-j,a}(u) \tilde S_{a,0}(v).
\end{align*}
Using \eqref{EQ:Smatrix2} and \eqref{EQ:Smatrix3} this is equivalent to
\begin{align*}
    &(u^2-v^2)
[D_{i,j}(u), - E_{k,0}(v) \tilde G(v)]  =  \\
& \big(-\delta_{i,-k} (u-v) F_{0,a}(u)D_{a,j}(u) + \delta_{j,k} (u+v) D_{i,a}(u) E_{a,0}(u) +
\delta_{i,-k} D_{-j,a}(u) E_{a,0}(u)\big) \tilde G(v) \\
& + \big( -\delta_{i,-k} (u-v) D_{-a,j}(u) + \delta_{j,k}(u+v) D_{i,a}(u) + \delta_{i,-k} D_{-j,a}(u)\big) (-E_{a,0}(v) \tilde G(v)).
\end{align*}
Now multiplying on the right by $-G(v)$, and using the fact that $D_{i,j}(u)$ commutes with $\tilde G(v)$, gives that
\begin{align*}
    &(u^2-v^2)
[D_{i,j}(u), E_{k,0}(v) ]  =  \\
& \delta_{i,-k} (u-v) F_{0,a}(u)D_{a,j}(u) - \delta_{j,k} (u+v) D_{i,a}(u) E_{a,0}(u) -
\delta_{i,-k} D_{-j,a}(u) E_{a,0}(u) \\
& + \big( -\delta_{i,-k} (u-v) D_{-a,j}(u) + \delta_{j,k}(u+v) D_{i,a}(u) + \delta_{i,-k} D_{-j,a}(u)\big) E_{a,0}(v) .
\end{align*}

Now rearranging terms gives
\begin{align}  \label{EQ:almost}
   [D_{i,j}(u), E_{k,0}(v)] & = 
       \frac{\delta_{j,k}}{u-v} D_{i,a}(u)(E_{a,0}(v)- E_{a,0}(u)) \\
\notag       &+ \frac{\delta_{i,-k}}{u+v} ( F_{0,a}(u) D_{a,j}(u) -D_{-a,j}(u) E_{a,0}(v)) \\
\notag       &+ \frac{\delta_{i,-k}}{u^2-v^2} D_{-j,a}(u)(E_{a,0}(v)-E_{a,0}(u))
\end{align}
Now we consider the $D_{-a,j}(u) E_{a,0}(v)$ term in this equation.
By using the above commutator relation, and recalling that we are summing over $a=\pm 1$, we have that
\begin{align*}
   [D_{-a,j}(u), E_{a,0}(v)] & = 
       \frac{1}{u-v} D_{-j,a}(u)(E_{a,0}(v)- E_{a,0}(u)) \\
       &\qquad + \frac{2}{u+v} ( F_{0,a}(u) D_{a,j}(u) -D_{-a,j}(u) E_{a,0}(v)) \\
       &\qquad + \frac{2}{u^2-v^2} D_{-j,a}(u)(E_{a,0}(v)-E_{a,0}(u)) \\
       &=
       \frac{1}{u-v} D_{-j,a}(u)(E_{a,0}(v)- E_{a,0}(u)) \\
       &\qquad + \frac{2}{u+v} ( F_{0,a}(u) D_{a,j}(u) - E_{a,0}(v) D_{-a,j}(u) -[D_{-a,j}(u),E_{a,0}(v)]) \\
       &\qquad + \frac{2}{u^2-v^2} D_{-j,a}(u)(E_{a,0}(v)-E_{a,0}(u)).
\end{align*} 
Now (ignoring the middle term) we can solve this equation for $[D_{-a,j}(u),E_{a,0}(v)]$ to get
\begin{align*}
   [D_{-a,j}(u), E_{a,0}(v)] & = 
       \frac{(u+v)}{(u-v)(u+v+2)} D_{-j,a}(u)(E_{a,0}(v)- E_{a,0}(u)) \\
       &\qquad + \frac{2}{u+v+2} ( F_{0,a}(u) D_{a,j}(u) -E_{a,0}(v) D_{-a,j}(u)) \\
       &\qquad + \frac{2}{(u-v)(u+v+2)} D_{-j,a}(u)(E_{a,0}(v)-E_{a,0}(u)).
\end{align*}
Now using this 
to commute $D_{-a,j}(u)$ past $E_{a,0}(v)$ in \eqref{EQ:almost}, then simplifying gives the result.

Finally applying the anti-automorphism $\tau$ to this gives Theorem \ref{T:1} \eqref{EQ:FDI}.
\end{proof}

\subsection{Proof of Theorem \ref{T:1} \eqref{EQ:GEI} and \eqref{EQ:GFI}}
\begin{proof}
Assuming that $i \in \{\pm 1\}$ and that $j,k,l=0$ in \eqref{EQ:relhelp}, we get
\begin{align*} 
    &(v^2-u^2) \tilde S_{0,0}(v) S_{i,0}(u)   - (u-v) \tilde S_{0,a}(v) S_{i,-a}(u) + \tilde S_{0,a}(v) S_{a,-i}(u) \\
\notag     & = (v^2-u^2) S_{i,0}(u) \tilde S_{0,0}(v) +  (u+v) S_{i,a}(u) \tilde S_{a,0}(v) 
\end{align*}
which is equivalent to
\begin{align*}
    &(u^2-v^2)
[S_{i,0}(u), \tilde S_{0,0}(v)]  =  \\
&\qquad (u+v) S_{i,a}(u) \tilde S_{a,0}(v)  
    +  (u-v) \tilde S_{0,a}(v) S_{i,-a}(u) - \tilde S_{0,a}(v) S_{a,-i}(u).
\end{align*}
Now if we only sum over $a=\pm 1$ we get
\begin{align*}
    (u^2-v^2)
[S_{i,0}(u), \tilde S_{0,0}(v)]  &=  
 (u+v) (S_{i,0}(u) \tilde S_{0,0}(v) + S_{i,a}(u) \tilde S_{a,0}(v)) \\
&\quad    +  (u-v) (\tilde S_{0,0}(v) S_{i,0}(u) +  \tilde S_{0,a}(v) S_{i,-a}(u) ) \\
&\quad    - (\tilde S_{0,0}(v) S_{0,-i}(u) + \tilde S_{0,a}(v) S_{a,-i}(u)).
\end{align*}
Using \eqref{EQ:Smatrix2} and \eqref{EQ:Smatrix3} this is equivalent to
\begin{align*}
    (u^2-v^2)
[D_{i,a}(u) E_{a,0}(u), \tilde G (v)]  &=  
 (u+v) (D_{i,a}(u)E_{a,0}(u) \tilde G(v) - D_{i,a}(u) E_{a,0}(v) \tilde G(v)) \\
&\quad    +  (u-v) (\tilde G(v) D_{i,a}(u)E_{a,0}(u)  -  \tilde G(v) F_{0,-a}(v) D_{i,a}(u) ) \\
&\quad    - (\tilde G(v) F_{0,a}(u)D_{a,-i}(u) - \tilde G(v) F_{0,a}(v) D_{a,-i}(u)).
\end{align*}

Now by Theorem \ref{T:1} \eqref{EQ:FDI} (and the fact that we are summing over $a \in \{\pm 1\}$) we have that
\begin{align*}
[F_{0,-a}(v),D_{i,a}(u)] &= \frac{1}{u-v} ( F_{0,a}(v) - F_{0,a}(u)) D_{a,-i}(u) \\
&\quad + \frac{2}{u+v+2} D_{i,a}(u)(E_{a,0}(u) - F_{0,-a}(v)).
\end{align*}
Using this to commute $F_{0,-a}(v)$ past $D_{i,a}(u)$ in the previous equation gives that 
\begin{align*}
    (u^2-v^2)
[D_{i,a}(u) E_{a,0}(u), \tilde G (v)]  &=  
 (u+v) (D_{i,a}(u)E_{a,0}(u) \tilde G(v) - D_{i,a}(u) E_{a,0}(v) \tilde G(v)) \\
&\quad    +  (u-v) (\tilde G(v) D_{i,a}(u)E_{a,0}(u)  -  \tilde G(v) D_{i,a}(u) F_{0,-a}(v) ) \\
&\qquad - \frac{2(u-v)}{u+v+2} \tilde G(v) D_{i,a}(u) (E_{a,0}(u) - F_{0,-a}(v)).
\end{align*}
Now we can consider this to be an equation of column vectors by allowing $i$ to vary over $\pm 1$.  Multiplying this on the left by $\tilde D(u)$, and using the fact
that $D(u)$ and $G(v)$ commute, gives 
\begin{align*}
    (u^2-v^2)
[E_{i,0}(u), \tilde G (v)]  &=  
 (u+v) (E_{i,0}(u) \tilde G(v) - E_{i,0}(v) \tilde G(v)) \\
&\quad    +  (u-v) (\tilde G(v) E_{i,0}(u)  -  \tilde G(v) F_{0,-i}(v) ) \\
&\qquad - \frac{2(u-v)}{u+v+2} \tilde G(v) (E_{i,0}(u) - F_{0,-i}(v)),
\end{align*}
which is easily seen to be equivalent to Theorem \ref{T:1} \eqref{EQ:GEI}.

Finally applying the anti-automorphism $\tau$ to this gives Theorem \ref{T:1} \eqref{EQ:GFI}.
\end{proof}

\subsection{Proof of Theorem \ref{T:1} \eqref{EQ:EFI}}
\begin{proof}
Assuming that $i,l \in \{\pm 1\}$ and that $j,k=0$ in \eqref{EQ:relhelp}, we get
\begin{align*} 
    &(v^2-u^2) \tilde S_{0,l}(v) S_{i,0}(u)   + \delta_{i,l} (u+v) \tilde S_{0,a}(v) S_{a,0}(u)  \\
\notag     & = (v^2-u^2) S_{i,0}(u) \tilde S_{0,l}(v) +  (u+v) S_{i,a}(u) \tilde S_{a,l}(v) 
\end{align*}
which is equivalent to
\begin{align*}
    &(u^2-v^2)
[S_{i,0}(u), \tilde S_{0,l}(v)]  = 
 -\delta_{i,l} (u+v) \tilde S_{0,a}(v) S_{a,0}(u)  
    +  (u+v) S_{i,a}(u) \tilde S_{a,l}(v).
\end{align*}
Now if we only sum over $a=\pm 1$ we get
\begin{align*}
    (u^2-v^2)
[S_{i,0}(u), \tilde S_{0,l}(v)]  &= 
 -\delta_{i,l} (u+v) \tilde S_{0,0}(v) S_{0,0}(u)  
 -\delta_{i,l} (u+v) \tilde S_{0,a}(v) S_{a,0}(u)  \\
 &\quad
    +  (u+v) S_{i,0}(u) \tilde S_{0,l}(v)
    +  (u+v) S_{i,a}(u) \tilde S_{a,l}(v).
\end{align*}
Using \eqref{EQ:Smatrix2} and \eqref{EQ:Smatrix3} this is equivalent to
\begin{align} \label{EQ:reln1}
    &(u-v)
[D_{i,a}(u) E_{a,0}(u), -\tilde G(v) F_{0,l}(v)]  =  \\ \notag
&\qquad - \delta_{i,l}\tilde G(v)(F_{0,a}(u) D_{a,b}(u) E_{b,0}(u) + G(u)) + \delta_{i,l} \tilde G(v) F_{0,a}(v) D_{a,b}(u) E_{b,0}(u) \\ \notag
&\qquad - D_{i,a}(u) E_{a,0}(u) \tilde G(v) F_{0,l}(v) + D_{i,a}(u) (\tilde D_{a,l}(v) + E_{a,0}(v) \tilde G(v) F_{0,l}(v)).
\end{align}
Now the left hand side of \eqref{EQ:reln1} is 
\[
  (u-v) (\tilde G(v) F_{0,l}(v) D_{i,a}(u) E_{a,0}(u) - D_{i,a}(u) E_{a,0}(u) \tilde G(v) F_{0,l}(v)).
\]
If we use the commutator formulas for $[F_{0,l}(v),D_{i,a}(u)]$ and $[E_{a,0}(u) \tilde G(v)]$ from Theorem \ref{T:1}, we get that this equals
\begin{align*}
   (u-v) &\Big( 
    \tilde G(v) D_{i,a}(u) F_{0,l}(v) E_{a,0}(u) - D_{i,a}(u) \tilde G(v) E_{a,0}(u) F_{0,l}(v)  \\
     & \qquad +\frac{\delta_{i,l}}{u-v} \tilde G(v) (F_{0,b}(v) - F_{0,b}(u))D_{b,a}(u) E_{a,0}(u) \\ 
    &\qquad + \frac{1}{u+v+2} \tilde G(v) D_{i,a}(u) (E_{a,0}(u)-F_{0,-a}(v)) E_{-l,0}(u) \\
    &\qquad -\frac{1}{u-v} D_{i,a}(u) (E_{a,0}(u) - E_{a,0}(v)) \tilde G(v) F_{0,l}(v) \\
    &\qquad - \frac{1}{u+v+2} D_{i,a}(u) \tilde G(v) (E_{a,0}(u) - F_{0,-a}(v)) F_{0,l}(v)
   \Big).
\end{align*}
Note that the second and fourth lines of this cancel with terms from the right hand side of \eqref{EQ:reln1}. 
Substituting this into \eqref{EQ:reln1} and canceling terms now yields that 
\begin{align*}
   (u-v) \tilde G(v) D_{i,a}(u) [ F_{0,l}(v),E_{a,0}(u)] &= -\delta_{i,l} \tilde G(v) G(u) + D_{i,a}(u) \tilde D_{a,l}(v) \\
    &\quad - \frac{u-v}{u+v+2} \tilde G(v) D_{i,a} (u) (E_{a,0}(u) - F_{0,-a}(v)) E_{-l,0}(u) \\
    &\quad + \frac{u-v}{u+v+2} D_{i,a}(u) \tilde G(v)(E_{a,0}(u) - F_{0,-a}(v)) F_{0,l}(v).
\end{align*}
We can treat this as an equation of column vectors (letting $i$ run over $\pm 1$).  Now if we multiply this on the left by $G(v) \tilde D(u)$,
we get
\begin{align*}
   (u-v) [ F_{0,l}(v),E_{i,0}(u)] &= -\tilde D_{i,l}(u) G(u) + G(v) \tilde D_{i,l}(v) \\
    &\quad - \frac{u-v}{u+v+2} (E_{i,0}(u) - F_{0,-i}(v)) E_{-l,0}(u) \\
    &\quad + \frac{u-v}{u+v+2} (E_{i,0}(u) - F_{0,-i}(v)) F_{0,l}(v),
\end{align*}
which simplifies to
\begin{align*}
   [ E_{i,0}(u),F_{0,l}(v)] &= \frac{1}{u-v} (\tilde D_{i,l}(u) G(u) - G(v) \tilde D_{i,l}(v)) \\
    &\quad + \frac{1}{u+v+2} (E_{i,0}(u) - F_{0,-i}(v)) (E_{-l,0}(u)-F_{0,l}(v)).
\end{align*}
\end{proof}

\subsection{Proof of Theorem \ref{T:1} \eqref{EQ:EEI} and \eqref{EQ:FFI}}
\begin{proof}
Assuming that $i,k \in \{\pm 1\}$ and that $j,l=0$ in \eqref{EQ:relhelp}, we get
\begin{align*} 
    &(v^2-u^2) \tilde S_{k,0}(v) S_{i,0}(u)   - (u-v) \tilde S_{k,a}(v) S_{i,-a}(u) + \tilde S_{k,a}(v) S_{a,-i}(u) \\
\notag     & = (v^2-u^2) S_{i,0}(u) \tilde S_{k,0}(v)   - \delta_{i,-k} S_{-a,0}(u) \tilde S_{a,0}(v) + \delta_{i,-k} S_{0,a}(u) \tilde S_{a,0}(v)
\end{align*}
which is equivalent to
\begin{align*}
    (u^2-v^2)
[S_{i,0}(u), \tilde S_{k,0}(v)]  &= 
(u-v)(\tilde S_{k,a}(v) S_{i,-a}(u) - \delta_{i,-k} S_{-a,0}(u) \tilde S_{a,0}(v)) \\
&\quad - \tilde S_{k,a}(v) S_{a,-i}(u) + \delta_{i,-k} S_{0,a}(u) \tilde S_{a,0}(v).
\end{align*}
Now if we only sum over $a=\pm 1$ we get
\begin{align*}
    (u^2-v^2)
[S_{i,0}(u), \tilde S_{k,0}(v)]  &= 
(u-v)\big(\tilde S_{k,0}(v) S_{i,0}(u) +
\tilde S_{k,a}(v) S_{i,-a}(u)  \\ 
&\quad - \delta_{i,-k} S_{0,0}(u) \tilde S_{0,0}(v) 
 - \delta_{i,-k} S_{-a,0}(u) \tilde S_{a,0}(v) \big) \\
&\quad - \tilde S_{k,0}(v) S_{0,-i}(u) 
- \tilde S_{k,a}(v) S_{a,-i}(u) \\ 
&\quad +\delta_{i,-k} S_{0,0}(u) \tilde S_{0,0}(v)
+\delta_{i,-k} S_{0,a}(u) \tilde S_{a,0}(v).
\end{align*}
Using \eqref{EQ:Smatrix2} and \eqref{EQ:Smatrix3} this is equivalent to
\begin{align}  \label{EQ:EGcomm1}
    &-(u^2-v^2)
    [D_{i,a}(u) E_{a,0}(u), E_{k,0}(v) \tilde G(v)] =  \\ \notag
 &    (u-v) \big ( -E_{k,0}(v) \tilde G(v) D_{i,a}(u) E_{a,0}(u) + (\tilde D_{k,a}(v) +E_{k,0}(v) \tilde G(v) F_{0,a}(v))D_{i,-a}(u) \\ \notag
 &   - \delta_{i,-k} (F_{0,a}(u) D_{a,b}(u) E_{b,0}(u) + G(u)) \tilde G(v) + \delta_{i,-k} D_{-a,b}(u) E_{b,0}(u) E_{a,0}(v) \tilde G(v) \big) \\ \notag
 &  + E_{k,0}(v) \tilde G(v) F_{0,a}(u) D_{a,-i}(u) - (\tilde D_{k,a}(v) +E_{k,0}(v) \tilde G(v) F_{0,a}(v)) D_{a,-i}(u) \\ \notag
 &  \delta_{i,-k} (F_{0,a}(u) D_{a,b}(u) E_{b,0}(u) + G(u)) \tilde G(v) - \delta_{i,-k} F_{0,b}(u) D_{b,a}(u) E_{a,0}(v) \tilde G(v).
\end{align}
Now the left hand side of this equation is
\begin{align} \label{EQ:EGcomm}
&(u^2-v^2) (E_{k,0}(v) \tilde G(v) D_{i,a}(u) E_{a,0}(u) - D_{i,a}(u) E_{a,0}(u) E_{k,0}(v) \tilde G(v)) = \\
&\quad (u^2-v^2) \big(D_{i,a}(u) E_{k,0}(v) E_{a,0}(u) \tilde G(v) - D_{i,a}(u) E_{a,0}(u) E_{k,0}(v) \tilde G(v)  \notag \\ 
&\quad 
+ E_{k,0}(v) D_{i,a}(u)[\tilde G(v), E_{a,0}(u)] 
+ [E_{k,0}(v),D_{i,a}(u)] E_{a,0}(u) \tilde G(v) \big).\notag
\end{align}
Note that 
\begin{align*}
E_{k,0}(v) \tilde G(v) D_{i,a}(u) E_{a,0}(u) & = 
E_{k,0}(v) D_{i,a}(u) \tilde G(v) E_{a,0}(u) \\
&= D_{i,a}(u) E_{k,0}(v) \tilde G(v) E_{a,0}(u) - [E_{k,0}(v) ,D_{i,a}(u)] \tilde G(v) E_{a,0}(u) \\
 & =  D_{i,a}(u) E_{k,0}(v) E_{a,0}(u) \tilde G(v) -D_{i,a}(u) E_{k,0}(v) [ E_{a,0}(u)\tilde G(v)]  \\
 &\qquad - [E_{k,0}(v) ,D_{i,a}(u)] \tilde G(v) E_{a,0}(u)
\end{align*}

Now we use this and the fact that
\begin{align*}
   [E_{a,0}(u), \tilde G(v)] &= \frac{1}{u-v} (E_{a,0}(u) - E_{a,0}(v)) \tilde G(v) \\
    & + \frac{1}{u+v}(\tilde G(v) E_{a,0}(u) - \tilde D_{a,b}(u) \tilde G(v) F_{0,-c}(v) D_{b,c}(u)) \\
    & + \frac{1}{u^2-v^2} \tilde G(v) \tilde D_{a,b}(u) (F_{0,c}(v) - F_{0,c}(u))D_{c,-b}(u).
\end{align*}
and
\begin{align} \label{EQ:ERDP}
  [E_{k,0}(v), D_{i,a}(u)] &= \frac{\delta_{a,k}}{u-v} D_{i,b}(u) (E_{b,0}(u) - E_{b,0}(v)) \\ \notag
  & + \frac{\delta_{i,-k}}{u+v} (-F_{0,b}(u) D_{b,a}(u) + D_{-b,a}(u) E_{b,0}(v)) \\ \notag
  & + \frac{\delta_{i,-k}}{u^2-v^2} D_{-a,b}(u) (E_{b,0}(u) - E_{b,0}(v))
\end{align}
to get that 
the left hand side of \eqref{EQ:EGcomm} is
\begin{align} \label{EQ:EGcomm2}
&(u^2-v^2) \big( D_{i,a}(u) E_{k,0}(v) E_{a,0}(u) \tilde G(v) - D_{i,a}(u) E_{a,0}(u) E_{k,0}(v) \tilde G(v) \big) \\ \notag
&\quad +(u+v) (E_{k,0}(v) D_{i,a}(u)(E_{a,0}(u) - E_{a,0}(v)) \tilde G(v)) \\ \notag
&\quad + (u-v)  E_{k,0}(v) D_{i,a}(u) \big(\tilde D_{a,b}(u) \tilde G(v) F_{0,-c}(v) D_{b,c}(u) - \tilde G(v) E_{a,0}(u) \big) \\ \notag
&\quad + E_{k,0}(v) D_{i,a}(u) \tilde G(v) \tilde D_{a,b}(u) \big( F_{0,c}(u) - F_{0,c}(v) \big) D_{c,b}(u) \\ \notag
&\quad + (u+v) D_{i,b}(u) \big(E_{b,0}(u) - E_{b,0}(v) \big) E_{k,0}(u) \tilde G(v) \\ \notag
&\quad + (u-v) \delta_{i,-k} \big( D_{-b,a}(u) E_{b,0}(v) - F_{0,b}(u) D_{b,a}(u) \big) E_{a,0}(u) \tilde G(v) \\ \notag
&\quad + \delta_{i,-k} D_{-a,b}(u) \big( E_{b,0}(u) - E_{b,0}(v)\big) E_{a,0}(u) \tilde G(v).
\end{align}

Now we use \eqref{EQ:ERDP} on the second line of this to get that
\begin{align*}
 &(u+v) (E_{k,0}(v) D_{i,a}(u)(E_{a,0}(u) - E_{a,0}(v)) \tilde G(v))  = \\ 
&\qquad  (u+v) (D_{i,a}(u) E_{k,0}(v)(E_{a,0}(u) - E_{a,0}(v)) \tilde G(v)) \\
 & \qquad  + \frac{u+v}{u-v} D_{i,b}(u)(E_{b,0}(u) - E_{b,0}(v))(E_{k,0}(v) - E_{k,0}(u)) \tilde G(v) \\
 &\qquad  + \delta_{i,-k}(D_{-b,a}(u) E_{b,0}(v) - F_{0,b}(u) D_{b,a}(u))(E_{a,0}(v)-E_{a,0}(u)) \tilde G(v) \\
 &\qquad + \frac{\delta_{i,-k}}{u-v} D_{-a,b}(u)(E_{b,0}(u)-E_{b,0}(v))(E_{a,0}(v)-E_{a,0}(u)) \tilde G(v).
\end{align*}

We substitute this into \eqref{EQ:EGcomm2} and then cancel like terms in \eqref{EQ:EGcomm1}.  Furthermore we
consider \eqref{EQ:EGcomm1} to be an equation of column vectors where $i \in \{\pm1\}$, and multiply both sides on the left by $\tilde D(u)$ and on the right by $G(v)$.  This results in
\begin{align*}
  &(u^2-v^2)[E_{k,0}(v),E_{i,0}(u)] + (u+v) E_{k,0}(v)(E_{i,0}(v) - E_{i,0}(u)) \\
  &\qquad + \frac{u+v}{u-v}(E_{i,0}(u) - E_{i,0}(v))(E_{k,0}(v) - E_{k,0}(u))  \\
  &\qquad + \tilde D_{i,-k}(u) D_{-b,a}(u) E_{b,0}(v)(E_{a,0}(v) - E_{a,0}(u))\\
  &\qquad + \frac{1}{u-v} \tilde D_{i,-k}(u) D_{-a,b}(u)(E_{b,0}(u) - E_{b,0}(v))(E_{a,0}(v)-E_{a,0}(u)) \\
  &\qquad + (u+v)(E_{i,0}(u) - E_{i,0}(v)) E_{k,0}(u) + (u-v) \tilde D_{i,-k}(u) D_{-b,a}(u) E_{b,0}(v) E_{a,0}(u) \\
  &\qquad + \tilde D_{i,-k}(u) D_{-a,b}(u)(E_{b,0}(u)-E_{b,0}(v))E_{a,0}(u) \\
  &\quad = (u-v) \tilde D_{i,b}(u) \tilde D_{k,a}(v) D_{b,-a}(u) G(v) - (u-v) \tilde D_{i,-k}(u) G(u) \\
  &\qquad - (u-v) \tilde D_{i,-k}(u) D_{-a,b}(u) E_{b,0}(u) E_{a,0}(v) \\
  &\qquad - \tilde D_{i,b}(u) \tilde D_{k,a}(v) D_{a,-b}(u) G(v) + \tilde D_{i,-k}(u)G(u).
\end{align*}
Now we bring all but the first term on the left hand side of this over to the right hand side and we get
\begin{align} \label{EQ:EGcomm3}
 (u^2-v^2) [E_{k,0}(v), E_{i,0}(u)] &= 
 (u+v) \big(E_{k,0}(v) (E_{i,0}(u) - E_{i,0}(v)) + (E_{i,0}(v) - E_{i,0}(u)) E_{k,0}(u) \big) \\ \notag
 & \quad + (u-v) \big(\tilde D_{i,-k}(u) D_{-a,b}(u) [E_{b,0}(u),E_{a,0}(v)] \\ \notag
 &\qquad + \tilde D_{i,b}(u) \tilde D_{k,a}(v) D_{b,-a}(u) G(v) - \tilde D_{i,-k}(u) G(u) \big) \\ \notag
 &\quad + \frac{u+v}{u-v} (E_{i,0}(u) - E_{i,0}(v))(E_{k,0}(u)-E_{k,0}(v)) \\ \notag
 &\quad  + \tilde D_{i,-k}(u) D_{-b,a}(u) E_{b,0}(v)(E_{a,0}(u) - E_{a,0}(v)) \\ \notag
 &\quad+ \tilde D_{i,-r}(u) D_{-a,b}(u) (E_{b,0}(v) - E_{b,0}(u)) E_{a,0}(u) \\ \notag
 &\quad - \tilde D_{i,b}(u) \tilde D_{k,a}(v) D_{a,-b}(u) G(v) + \tilde D_{i,-k}(u) G(u) \\
 &\quad + \frac{1}{u-v} \tilde D_{i,-k}(u) D_{-a,b}(u) (E_{b,0}(u) - E_{b,0}(v))(E_{a,0}(u)-E_{a,0}(v)).
\end{align}

In order to deal with 
$\tilde D_{i,-k}(u) D_{-a,b}(u) [E_{b,0}(u),E_{a,0}(v)]$,
we let
\begin{align*}
  X &= -
 \tilde D_{i,-k}(u) D_{-a,b}(u) [E_{b,0}(u),E_{a,0}(v)] \\
 &\quad + \frac{1}{u-v} \big(\tilde D_{i,-k}(u) D_{-b,a}(u) E_{b,0}(v) (E_{a,0}(v)- E_{a,0}(u)) \\
  &\qquad + \tilde D_{i,-k}(u) D_{-a,b}(u) (E_{b,0}(u) - E_{b,0}(v)) E_{a,0}(u)\big) \\
  &\quad + \frac{1}{(u-v)^2} \tilde D_{i,-k}(u) D_{-a,b}(u) (E_{b,0}(v) - E_{b,0}(u))(E_{a,0}(u) - E_{a,0}(v)).
\end{align*}
Now, omitting some calculations, we can use \eqref{EQ:EGcomm3} in this formula to get $X$ to show up on the right hand side.  Then solving for $X$,
we get
\begin{align*}
   X &= \frac{1}{u+v+2}(\tilde D_{i,-k}(u) \tilde D_{-b,a}(v) D_{b,-a}(u) G(v) -2 \tilde D_{i,-k}(u) G(u) ) \\
   &\quad -\frac{1}{(u-v)(u+v+2)} (\tilde D_{i,-k}(u) \tilde D_{-b,a}(v) D_{a,-b}(u) G(v) -2  \tilde D_{i,-k}(u) G(u)).
\end{align*}

Substituting this into \eqref{EQ:EGcomm3}, dividing both sides by $-(u^2-v^2)$, and simplifying  gives
\begin{align} \label{EQ:EGcomm4}
  [E_{i,0}(u),E_{q,0}(v)] &= 
 \frac{1}{u-v} \big(E_{k,0}(v) (E_{i,0}(v) - E_{i,0}(u)) + (E_{i,0}(u) - E_{i,0}(v)) E_{k,0}(u) \big) \\ \notag 
  &\quad - \frac{1}{u+v} \tilde D_{i,b}(u) \tilde D_{k,-a}(v) D_{b,-a}(u) G(v)  \\ \notag
  &\quad + \frac{1}{(u-v)^2} (E_{i,0}(v)-E_{i,0}(u))(E_{k,0}(u)-E_{k,0}(v))) \\ \notag
  &\quad + \frac{1}{u^2-v^2}\tilde D_{i,b}(u) \tilde D_{k,a}(v) D_{a,-b}(u) G(v) \\ \notag
  &\quad + \frac{1}{(u+v+2)(u+v)} \tilde D_{i,-k}(u) \tilde D_{-b,a}(v) D_{b,-a}(u) G(v)  \\ \notag
  &\quad - \frac{1}{(u^2-v^2)(u+v+2)} \tilde D_{i,-k}(u) \tilde D_{-b,a}(v) D_{a,-b}(u) G(v) \\ \notag
  &\quad + \left( \frac{1}{u+v+2} - \frac{1}{(u-v)(u+v+2)} \right ) \tilde D_{i,-k}(u) G(u).
\end{align}

Now we use Theorem \ref{T:1} \eqref{EQ:D} to get get that
\begin{align*}
 [D_{b,-a}(u), \tilde D_{k,a}(v)] &= \frac{1}{u+v} \big(2 \tilde D_{k,c}(v) D_{b,-c}(u) - \delta_{k,-b} D_{-c,-a}(u) \tilde D_{c,a}(v)\big) \\
 &\quad + \frac{1}{u-v}\big(-\delta_{a,b} \tilde D_{k,c}(v) D_{c,-a}(u) + D_{b,c}(u) \tilde D_{c,-k}(v)\big) \\
 &\quad + \frac{1}{u^2-v^2} \big(-2 \tilde D_{k,c}(v) D_{c,-b}(u) + \delta_{k,-b} D_{a,c}(u) \tilde D_{c,a}(v)\big).
\end{align*}

So if we let $X= \tilde D_{i,b}(u) \tilde D_{k,a}(v) D_{b,-a}(u)G(v) - \frac{1}{u-v} \tilde D_{i,b}(u) \tilde D_{k,a}(v) D_{a,-b}(u)G(v)$ and use the above commutator
formula $X$ will show up on the right hand side of this equation. If we solve for $X$ we get
\begin{align} \label{EQ:X}
 X  &= \frac{u+v}{u+v+2} \tilde D_{k,-i}(v)G(v) + \frac{1}{u+v+2} \tilde D_{i,-k}(u) D_{-b,-a}(u) \tilde D_{b,a}(v) G(v) \\ \notag
&\quad - \frac{u+v}{(u+v+2)(u-v)} \tilde D_{i,-k}(v) G(v) - \frac{1}{(u+v+2)(u-v)} \tilde D_{i,-k}(u) D_{a,b}(u) \tilde D_{b,a}(v) G(v).
\end{align}

We also use Theorem \ref{T:1} \eqref{EQ:D} to get
\begin{align*}
 [D_{b,a}(u), \tilde D_{-b,a}(v)] &= \frac{2}{u+v} \big(\tilde D_{-b,c}(v) D_{b,-c}(u) - D_{-c,-b}(u) \tilde D_{c,b}(v) \big) \\
 &\quad  + \frac{1}{u-v}\big(- \tilde D_{-b,c}(v) D_{c,-b}(u) + D_{b,c}(u) \tilde D_{c,b}(v) \big) \\
 &\quad + \frac{2}{u^2-v^2} \big( -\tilde D_{-b,c}(v) D_{c,-b}(u) + D_{b,c}(u) \tilde D_{c,b}(v)\big) \\
 & = \frac{1}{u-v} [D_{a,b}(u),\tilde D_{b,a}(v)].
\end{align*}

Using this and \eqref{EQ:X} in \eqref{EQ:EGcomm4} gives
\begin{align*}
  [E_{i,0}(u),E_{k,0}(v)] &= 
 \frac{1}{u-v} \big(E_{k,0}(v) (E_{i,0}(v) - E_{i,0}(u)) + (E_{i,0}(u) - E_{i,0}(v)) E_{k,0}(u) \big) \\ \notag 
  &\quad + \frac{1}{(u-v)^2} (E_{i,0}(v)-E_{i,0}(u))(E_{k,0}(u)-E_{k,0}(v))) \\ \notag
  &\quad - \frac{1}{u+v+2} \tilde D_{k,-i}(v) G(v) \\
  &\quad + \frac{1}{(u+v+2)(u-v)} \tilde D_{i,-k}(v) G(v) \\
  &\quad + \left( \frac{1}{u+v+2} - \frac{1}{(u-v)(u+v+2)} \right ) \tilde D_{i,-k}(u) G(u).
\end{align*}

Now if we commute $E_{k,0}(v)$ to the right of $E_{i,0}(v)$ and $E_{i,0}(u)$ and solve for $[E_{i,0}(u),E_{k,0}(v)]$ we get
\begin{align*}
  \frac{u-v-1}{u-v} [E_{i,0}(u),E_{k,0}(v)] &= -\frac{1}{u-v} [E_{i,0}(v),E_{k,0}(v)]  \\
  &\quad + \frac{1}{(u-v)} (E_{i,0}(u)-E_{i,0}(v))(E_{k,0}(u)-E_{k,0}(v))) \\ \notag
  &\quad - \frac{1}{(u-v)^2} (E_{i,0}(u)-E_{i,0}(v))(E_{k,0}(u)-E_{k,0}(v))) \\ \notag
  &\quad - \frac{1}{u+v+2} \tilde D_{k,-i}(v) G(v) \\
  &\quad + \frac{1}{(u+v+2)(u-v)} \tilde D_{i,-k}(v) G(v) \\
  &\quad + \left( \frac{1}{u+v+2} - \frac{1}{(u-v)(u+v+2)} \right ) \tilde D_{i,-k}(u) G(u).
\end{align*}

If we set $u=v+1$ in this, we get
\[
[E_{i,0}(v),E_{k,0}(v)] = -\frac{1}{2v+3} \tilde D_{k,-i}(v) G(v) + \frac{1}{2v+3} \tilde D_{i,-k}(v)G(v).
\]
Plugging this into the previous equation now gives
\begin{align*}
  [E_{i,0}(u),E_{k,0}(v)] &= \frac{1}{(u-v)} (E_{i,0}(u)-E_{i,0}(v))(E_{k,0}(u)-E_{k,0}(v))) \\ \notag
  &\quad + \frac{1}{u+v+2}  \tilde D_{i,-k}(u) G(u) \\
  &\quad - \frac{2(v+1)}{(2v+3)(u+v+2)} \tilde D_{k,-i}(v) G(v) \\
  &\quad - \frac{1}{(2v+3)(u+v+2)} \tilde D_{i,-k}(v) G(v).
\end{align*}

Finally applying the anti-automorphism $\tau$ to this gives Theorem \ref{T:1} \eqref{EQ:FFI}.

\end{proof}

\subsection{Proof of Theorem \ref{T:1} \eqref{EQ:GG}}
\begin{proof}
We will prove that $[\tilde G(v), G(u)] = 0$, which implies Theorem \ref{T:1} \eqref{EQ:GG}.

By \eqref{EQ:Smatrix2} $S_{0,0}(u) = G(u) + F_{0,a}(u) D_{a,b}(u) E_{b,0}(u)$, and by \eqref{EQ:Smatrix3} $\tilde S_{0,0}(v) = \tilde G(v)$.
Thus 
\[
[\tilde S_{0,0}(v), S_{0,0}(u)] = [\tilde G(v), G(u)] + [\tilde G(v), F_{0,a}(u) D_{a,b}(u) E_{b,0}(u)].  
\]
Therefore 
it suffices to show that
\[
[\tilde G(v), F_{0,a}(u) D_{a,b}(u) E_{b,0}(u)] - 
[\tilde S_{0,0}(v), S_{0,0}(u)] = 0.
\]

By \eqref{EQ:relhelp}, where $i,j,k,l = 0$, we have that
\begin{align*}
   (v^2&-u^2) 
   \tilde S_{0,0}(v), S_{0,0}(u)
   -(u-v) 
   \tilde S_{0,a}(v) S_{0,-a}(u) 
   + (u+v) 
   \tilde S_{0,a}(v) S_{a,0}(u) 
   + \tilde S_{0,a}(v) S_{a,0}(u)  = \\
   &
   (v^2-u^2) S_{0,0}(u), \tilde S_{0,0}(v) 
   -(u-v) 
   S_{-a,0}(u) \tilde S_{a,0}(v) 
   +(u+v) 
   S_{0,a}(u) \tilde S_{a,0}(v) 
   +
   S_{0,a}(u) \tilde S_{a,0}(v),
\end{align*} 
where we are summing over $a \in \{-1, 0, 1\}$.
If we instead only sum over $a \in \{\pm 1\}$ we get
\begin{align*}
   (v^2&-u^2) 
   \tilde S_{0,0}(v), S_{0,0}(u)
   -(u-v) 
   \left(\tilde S_{0,a}(v) S_{0,-a}(u)  +
   \tilde S_{0,0}(v) S_{0,0}(u) \right) \\
   &\quad + (u+v) 
   \left(\tilde S_{0,a}(v) S_{a,0}(u) +
   \tilde S_{0,0}(v) S_{0,0}(u) \right) 
   + \tilde S_{0,a}(v) S_{a,0}(u)  +
   \tilde S_{0,0}(v) S_{0,0}(u)  = \\
   &
   (v^2-u^2) S_{0,0}(u), \tilde S_{0,0}(v) 
   -(u-v) 
   \left(S_{-a,0}(u) \tilde S_{a,0}(v) 
   +S_{0,0}(u) \tilde S_{0,0}(v) \right) \\
   &\quad +(u+v) 
   \left(S_{0,a}(u) \tilde S_{a,0}(v)  +
   S_{0,0}(u) \tilde S_{0,0}(v) \right)
   +
   S_{0,a}(u) \tilde S_{a,0}(v) + 
   S_{0,0}(u) \tilde S_{0,0}(v).
\end{align*} 
Now by solving this equation for $[\tilde S_{0,0}(v), S_{0,0}(u)]$ we get
\begin{align} \label{EQ:s0comm}
[\tilde S_{0,0}(v), S_{0,0}(u)] &= 
-\tfrac{u-v}{(u+v+1)(u-v-1)}
\tilde S_{0,a}(v) S_{0,-a}(u) +
\tfrac{1}{u-v-1}
\tilde S_{0,a}(v) S_{a,0}(u) \\ \notag
&\quad 
+\tfrac{u-v}{(u+v+1)(u-v-1)}
S_{-a,0}(u) \tilde S_{a,0}(v)  -
\tfrac{1}{u-v-1}
S_{0,a}(u) \tilde S_{a,0}(v).
\end{align}

Next we turn our attention to 
\begin{align}  \label{EQ:GFDE}
[\tilde G(v), F_{0,a}(u) D_{a,b}(u) E_{b,0}(u)] & = 
\tilde G(v) F_{0,a}(u) D_{a,b}(u) E_{b,0}(u) - F_{0,a}(u) D_{a,b}(u) E_{b,0}(u)\tilde G(v).
\end{align}

Note that by multiplying both sides of Theorem \ref{T:1} \eqref{EQ:GEI} by $\tilde G(v)$ gives
\begin{align*}
     [E_{b,0}(v), \tilde G(u)] &=  \tfrac{1}{u-v}(E_{b,0}(u) - E_{b,0}(v)) \tilde G(u) \\ \notag
     &\quad + \tfrac{1}{u+v+2} \tilde G(u) (E_{b,0}(v) -F_{0,-b}(u)).
     \end{align*}
Now swapping $u$'s and $v$'s gives
\begin{align*}
     [E_{b,0}(u), \tilde G(v)] &=  \tfrac{1}{u-v}(E_{b,0}(u) - E_{b,0}(v)) \tilde G(v) \\ \notag
     &\quad + \tfrac{1}{u+v+2} \tilde G(v) (E_{b,0}(u) -F_{0,-b}(v)).
\end{align*}
     Now solving this for $E_{b,0}(u) \tilde G(v)$ gives
\begin{align} \label{EQ:EGuseful}
     E_{b,0}(u) \tilde G(v) &=  \tfrac{(u-v)(u+v+3)}{(u-v-1)(u+v+2)} \tilde G(v) E_{b,0}(u) -\tfrac{1}{u-v-1} E_{b,0}(v) \tilde G(v) \\ \notag
     &\quad - \tfrac{u-v}{(u+v+2)(u-v-1)} \tilde G(v) F_{0,-b}(v).
\end{align}
Also applying the anti-automorphism $\tau$ to this, and replacing $-b$ with $a$ gives
\begin{align} \label{EQ:GFuseful}
     \tilde G(v) F_{0,a}(u)  &=  \tfrac{(u-v)(u+v+3)}{(u-v-1)(u+v+2)}  F_{0,a}(u) \tilde G(v) -\tfrac{1}{u-v-1} \tilde G(v) F_{0,a}(v))  \\ \notag
     &\quad - \tfrac{u-v}{(u+v+2)(u-v-1)}  E_{-a,0}(v) \tilde G(v).
\end{align}
Now 
using the fact that $D_{a,b}$ and $\tilde G(v)$  commute, when
we plug these formulas into \eqref{EQ:GFDE} and simplify we get
\begin{align} \label{EQ:Step 1}
[\tilde G(v), &F_{0,a}(u) D_{a,b}(u) E_{b,0}(u)]  = 
\tfrac{1}{u-v-1} \left(F_a(u) D_{a,b}(u) E_b (v) \tilde G(v) - \tilde G(v) F_a(v) D_{a,b}(u) E_b(u)\right) \\ \notag
&\qquad \qquad \qquad 
+ \tfrac{u-v}{(u+v+2)(u-v-1)} \left( F_a(u) D_{a,b}(u) \tilde G(v) F_{-b}(v)   - 
E_{-a}(v) \tilde G(v) D_{a,b}(u) E_b(u)\right) \\ \notag
& \qquad \qquad \qquad \qquad \quad  
\phantom{:}
\phantom{:}
=  \tfrac{1}{u-v-1} \left (\tilde S_{0,a}(v) S_{a,0}(u) - S_{0,a}(u) \tilde S_{a,0}(v) \right ) \\ \notag
& \qquad \qquad \qquad \qquad \quad  
\phantom{:}
\phantom{:}
\quad + \tfrac{u-v}{(u+v+2)(u-v-1)} \left( \tilde S_{-a,0}(v) S_{a,0}(u) - S_{0,a}(u) \tilde S_{0, -a}(v)\right).
\end{align}
Now combining this in \eqref{EQ:s0comm} (and canceling like terms) gives
\begin{align*}
[\tilde G(v), &F_{0,a}(u) D_{a,b}(u) E_{b,0}(u)] -[\tilde S_{0,0}(v), S_{0,0}(u)]  =  \\
& \tfrac{u-v}{(u+v+2)(u-v-1)} \left( \tilde S_{-a,0}(v) S_{a,0}(u) - S_{0,a}(u) \tilde S_{0, -a}(v)\right) \\
& +\tfrac{u-v}{(u+v+1)(u-v-1)} \left(
\tilde S_{0,a}(v) S_{0,-a}(u) 
-
S_{-a,0}(u) \tilde S_{a,0}(v)  
\right).
\end{align*}
Thus in order to show that
$[\tilde G(v), F_{0,a}(u) D_{a,b}(u) E_{b,0}(u)] -[\tilde S_{0,0}(v), S_{0,0}(u)]  =  0$, it suffices to show that
\[
\tfrac{1}{u+v+2} \left( \tilde S_{-a,0}(v) S_{a,0}(u) - S_{0,a}(u) \tilde S_{0, -a}(v)\right) 
 +\tfrac{1}{u+v+1} \left(
\tilde S_{0,a}(v) S_{0,-a}(u) 
-
S_{-a,0}(u) \tilde S_{a,0}(v)  
\right) = 0.
\]

Next, using \eqref{EQ:relhelp} with $i=0,j=a,k=0,l=-a$ (where we are summing over $a \in \{\pm 1\}$) gives
\begin{align} \label{EQ:relnmaa}
 (v^2&-u^2) \tilde S_{0,-a}(v) S_{0,a}(u) 
 - 2 (u-v) \left( \tilde S_{0,b}(v) S_{0,-b}(u) + \tilde S_{0,0}(v) S_{0,0}(u) \right) \\ \notag
 &\qquad +2 \left(\tilde S_{0,b}(v) S_{b,0}(u)   + \tilde S_{0,0}(v) S_{0,0}(u)\right)  \\ \notag
&= (v^2-u^2) S_{0,a}(u) \tilde S_{0,-a}(v) 
- (u-v) \left( S_{-b,a}(u) \tilde S_{b,-a}(u) + S_{0,a}(u) \tilde S_{0,-a}(v) \right) \\ \notag
&\qquad+ S_{-a,b}(u) \tilde S_{b,-a}(v) + S_{-a,0}(u) \tilde S_{0,-a}(v).
\end{align}
Since we are summing over $a,b \in \{\pm 1\}$, we can make the following substitutions:
\begin{itemize}
\item
$\tilde S_{0,b}(v) S_{0,-b}(u) = \tilde S_{0,-a}(v) S_{0,a}(u)$, 
\item
$\tilde S_{0,b}(v) S_{b,0}(u) = \tilde S_{0,a}(v) S_{a,0}(u)$, 
\item
$S_{-b,a}(u) \tilde S_{b,-a}(v) = S_{a,b}(u) \tilde S_{-a,-b}(v)$, 
\item
$S_{-a,b}(u) \tilde S_{b,-a}(v) = S_{a,b}(u) \tilde S_{b,a}(v)$, and  
\item
$S_{-a,0}(u) \tilde S_{0,-a}(v) = S_{a,0}(u) \tilde S_{0,a}(v)$.
\end{itemize}
Now plugging these into \eqref{EQ:relnmaa} and solving for $S_{0,a}(u) \tilde S_{0,-a}(v)$ gives
\begin{align*} 
  S_{0,a}(u) \tilde S_{0,-a}(v) &= \tfrac{u+v+2}{u+v+1} \tilde S_{0,-a}(v) S_{0,a}(u)
  +\tfrac{2(u-v-1)}{(u+v+1)(u-v)} \tilde S_{0,0}(v) S_{0,0}(u)  \\ \notag
  &\quad - \tfrac{2}{(u-v)(u+v+1)} \tilde S_{0,a}(v) S_{a,0}(u) 
   - \tfrac{1} {u+v+1} S_{a,b}(u) \tilde S_{-a,-b}(v) \\ \notag
  &\quad + \tfrac{1}{(u-v)(u+v+1)} S_{a,b}(u) \tilde S_{b,a}(v) 
  + \tfrac{1}{(u-v)(u+v+1)} S_{a,0}(u) \tilde S_{0,a}(v).
\end{align*}
Applying the anti-automorphism $\tau$ to this gives
\begin{align*}
  \tilde S_{-a,0}(v) S_{a,0}(u)  &= 
  \tfrac{u+v+2}{u+v+1} S_{a,0}(u) \tilde S_{-a,0}(v)
  +\tfrac{2(u-v-1)}{(u+v+1)(u-v)} S_{0,0}(u) \tilde S_{0,0}(v) \\
  &\quad - \tfrac{2}{(u-v)(u+v+1)} S_{0,a}(u) \tilde S_{a,0}(v) 
   - \tfrac{1} {u+v+1} \tilde S_{-a,-b}(v) S_{a,b}(u)  \\
  &\quad + \tfrac{1}{(u-v)(u+v+1)} \tilde S_{b,a}(v)  S_{a,b}(u)
  + \tfrac{1}{(u-v)(u+v+1)} \tilde S_{a,0}(v) S_{0,a}(u).
\end{align*}

So
\begin{align} \label{EQ:solveX}
\tfrac{1}{u+v+2} &\left( \tilde S_{-a,0}(v) S_{a,0}(u) - S_{0,a}(u) \tilde S_{0, -a}(v)\right) 
 +\tfrac{1}{u+v+1} \left(
\tilde S_{0,a}(v) S_{0,-a}(u) 
-
S_{-a,0}(u) \tilde S_{a,0}(v)  
\right)  \\ \notag
& = 
\tfrac{1}{u+v+2} \left(  
  \tfrac{u+v+2}{u+v+1} S_{a,0}(u) \tilde S_{-a,0}(v)
  +\tfrac{2(u-v-1)}{(u+v+1)(u-v)} S_{0,0}(u) \tilde S_{0,0}(v) \right.\\ \notag
  &\qquad \quad - \tfrac{2}{(u-v)(u+v+1)} S_{0,a}(u) \tilde S_{a,0}(v) 
   - \tfrac{1} {u+v+1} \tilde S_{-a,-b}(v) S_{a,b}(u)  \\ \notag
  & \qquad \quad + \tfrac{1}{(u-v)(u+v+1)} \tilde S_{b,a}(v)  S_{a,b}(u)
  + \tfrac{1}{(u-v)(u+v+1)} \tilde S_{a,0}(v) S_{0,a}(u) \\ \notag
& \qquad \quad - \left( 
  \tfrac{u+v+2}{u+v+1} \tilde S_{0,-a}(v) S_{0,a}(u)
  +\tfrac{2(u-v-1)}{(u+v+1)(u-v)} \tilde S_{0,0}(v) S_{0,0}(u)  \right. \\ \notag
  &\qquad \quad \quad - \tfrac{2}{(u-v)(u+v+1)} \tilde S_{0,a}(v) S_{a,0}(u) 
   - \tfrac{1} {u+v+1} S_{a,b}(u) \tilde S_{-a,-b}(v) \\ \notag
  &\qquad \quad \left . \left . \quad + \tfrac{1}{(u-v)(u+v+1)} S_{a,b}(u) \tilde S_{b,a}(v) 
  + \tfrac{1}{(u-v)(u+v+1)} S_{a,0}(u) \tilde S_{0,a}(v)\right) \right) \\ \notag
 &\qquad +\tfrac{1}{u+v+1} \left(
\tilde S_{0,a}(v) S_{0,-a}(u) 
-
S_{-a,0}(u) \tilde S_{a,0}(v)  \right) \\ \notag
&=
- \tfrac{2(u-v-1)}{(u+v+2)(u+v+1)(u-v)} [\tilde S_{0,0}(v), S_{0,0}(u)] \\ \notag
&\quad  + \tfrac{1}{(u+v+1)(u+v+2)} [ S_{a,b}(u) ,\tilde S_{-a,-b}(v)] \\ \notag
&\quad + \tfrac{1}{(u-v)(u+v+1)(u+v+2)} \left (-2 S_{0,a}(u) \tilde S_{a,0}(v) + \tilde S_{a,0}(v) S_{0,a}(u) \right. \\ \notag
&\qquad \qquad \qquad \qquad \qquad \quad \left. +2 \tilde S_{0,a}(v) S_{a,0}(u) -  S_{a,0}(u) \tilde S_{0,a}(v) +[S_{a,b}(u), \tilde S_{b,a}(v)] \right).
\end{align}

Next we need a formula for $[ S_{a,b}(u) ,\tilde S_{-a,-b}(v)]$.  Using \eqref{EQ:relhelp} with $i=a,j=b,k=-a,l=-b$, we have
\begin{align*}
   (v^2&-u^2)  \tilde S_{-a,-b}(v) S_{a,b}(u) - 2 (u-v) \left(\tilde S_{-a,c}(v) S_{a,-c}(u) + \tilde S_{-a,0}(v) S_{a,0}(u)\right) \\
   & \quad + (u+v) \left(\tilde S_{-a,c}(v) S_{c,-a} + \tilde S_{-a,0}(v) S_{0,-a}(u) \right)
   + 2 \left( \tilde S_{-a,c}(v) S_{c,-a}(u) + \tilde S_{-a,0}(v) S_{0,-a}(u) \right) \\
   & =
   (v^2-u^2) S_{a,b}(u) \tilde S_{-a,-b}(v) - 2 (u-v) \left( S_{-c,b}(u) \tilde S_{c,-b}(v) + S_{0,b}(u) \tilde S_{0,-b}(v) \right) \\
    &\quad + (u+v) \left(S_{a,c}(u) \tilde S_{c,a}(v) + S_{a,0}(u) \tilde S_{0,a}(v) \right)
    + 2 \left( S_{-b,c}(u) \tilde S_{c, -b}(v) + S_{-b,0}(u) S_{0,-b}(v) \right).
\end{align*}
Using the fact that we are summing over $a,b,c \in \{\pm 1\}$, this equation can be rewritten as 
\begin{align*}
   (v^2&-u^2)  \tilde S_{-a,-b}(v) S_{a,b}(u) - 2 (u-v) \left(\tilde S_{-a,-b}(v) S_{a,b}(u) + \tilde S_{-a,0}(v) S_{a,0}(u)\right) \\
   & \quad + (u+v) \left(\tilde S_{b,a}(v) S_{a,b}(u) + \tilde S_{a,0}(v) S_{0,a}(u) \right)
   + 2 \left( \tilde S_{b,a}(v) S_{a,b}(u) + \tilde S_{a,0}(v) S_{0,a}(u) \right) \\
   & =
   (v^2-u^2) S_{a,b}(u) \tilde S_{-a,-b}(v) - 2 (u-v) \left( S_{a,b}(u) \tilde S_{-a,-b}(v) + S_{0,a}(u) \tilde S_{0,-a}(v) \right) \\
    &\quad + (u+v) \left(S_{a,b}(u) \tilde S_{b,a}(v) + S_{a,0}(u) \tilde S_{0,a}(v) \right)
    + 2 \left( S_{a,b}(u) \tilde S_{b, a}(v) + S_{a,0}(u) \tilde S_{0,a}(v) \right),
\end{align*}
which we can solve for $[S_{a,b}(u), \tilde S_{-a,-b}(v)]$ to get
\begin{align*}
[S_{a,b}(u), \tilde S_{-a,-b}(v)] &= \tfrac{2}{u+v+2} \left( \tilde S_{-a,0}(v) S_{a,0}(u) - S_{0,a}(u) \tilde S_{0,-a}(v) \right)  \\
&\quad + \tfrac{1}{u-v} \left( S_{a,0}(u) \tilde S_{0,a}(v) - \tilde S_{a,0}(v) S_{0,a}(u) \right) \\
&\quad + \tfrac{1}{u-v} [S_{a,b}(u), \tilde S_{b,a}(v)].
\end{align*}

Now we plug this and \eqref{EQ:s0comm} into \eqref{EQ:solveX} and after canceling like terms we get
\begin{align*}
\tfrac{1}{u+v+2} &\left( \tilde S_{-a,0}(v) S_{a,0}(u) - S_{0,a}(u) \tilde S_{0, -a}(v)\right) 
 +\tfrac{1}{u+v+1} \left(
\tilde S_{0,a}(v) S_{0,-a}(u) 
-
S_{-a,0}(u) \tilde S_{a,0}(v)  
\right)  \\ \notag
&= 
\tfrac{2}{(u+v+2)^2(u+v+1)} \left( \tilde S_{-a,0}(v) S_{a,0}(u) - S_{0,a}(u) \tilde S_{0, -a}(v)\right)  \\
 &\qquad \qquad +\tfrac{2}{(u+v+2)(u+v+1)^2} \left(
\tilde S_{0,a}(v) S_{0,-a}(u) 
-
S_{-a,0}(u) \tilde S_{a,0}(v)  
\right).
\end{align*}
Now if we let 
\[X= 
\tfrac{1}{u+v+2} \left( \tilde S_{-a,0}(v) S_{a,0}(u) - S_{0,a}(u) \tilde S_{0, -a}(v)\right) 
 +\tfrac{1}{u+v+1} \left(
\tilde S_{0,a}(v) S_{0,-a}(u) 
-
S_{-a,0}(u) \tilde S_{a,0}(v)  
\right),
\]
then the previous equation is
\[
   X = \frac{2}{(u+v+2)(u+v+1)} X,
\]
which implies $X = 0$ as required.
\end{proof}

\subsection{Proof of Theorem \ref{T:1} \eqref{EQ:EFsymm} and \eqref{EQ:FEsymm}}
\begin{proof}
	By multiplying both sides of Theorem \ref{T:1} \eqref{EQ:GFactual} by $u+v+2$ we get
\begin{align*} 
     (u+v+2) [F_{i}(u), G(v)] &=  \tfrac{u+v+2}{u-v} (F_{i}(u) - F_{i}(v))G(v) 
     + G(v) (F_{i}(u) -E_{-i}(v)) .
     \end{align*}
     
     Now multiplying both sides on the left by $\tilde G(v)$, then
substituting $u-2$ for $v$ and $-u$ for $u$ gives
\[
 F_i(-u) = E_{-i} (u-2),
\]
which is Theorem \ref{T:1} \eqref{EQ:FEsymm}.
Now applying the anti-automorphism $\tau$ to this gives Theorem \ref{T:1} \eqref{EQ:EFsymm}.

\end{proof}

\section{Shifted twisted Yangians} \label{S:partialeval}
Fix a positive integer $k$.
The shifted twisted Yangian is the subalgebra of the twisted Yangian
generated by
\[
   \{ D_{i,j}^{(s)}, G^{(s)}, E_{l}^{(r)} \mid i,j,l \in \{\pm 1 \}, s \geq 1, r > k \}
\]
It is easy to see from the above relations that this is a subalgebra.
Let $Y_3^+(k)$ denote the shifted twisted Yangian for $\so_3(\C)$.

For all $k >1$ we define a homomorphism
\[
  \phi_k : Y_3^+(k) \to Y_3^+(k-1) \otimes U(\gl_1(\C))
\]
via
\[
\phi_k (D_{i,j}^{(m)}) = D_{i,j}^{(m)} \otimes 1,
\]
\[
\phi_k (E_{i}^{(m)}) = E_{i}^{(m)} \otimes 1 + E_{i}^{(m-1)} \otimes e_{0,0},
\]
and
\[
  \phi_k (G^{(m)}) = 
  G^{(m)} \otimes 1 
  - \left(\sum_{l=1}^{m-1} (-2)^l G^{(m-l-1)} \right) \otimes e_{0,0}
  - \left(\sum_{l=0}^{m-2} (-2)^l G^{(m-l-2)} \right) \otimes e_{0,0}^2
\]
We call the homomorphism $\phi_k$ {\em partial-evaluation homomorphisms}.

\begin{Theorem}
   The map $\phi_k$ is an algebra homomorphism.
\end{Theorem}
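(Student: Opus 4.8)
The plan is to give $Y_3^+(k)$ a presentation and then verify directly that $\phi_k$ preserves its relations. First I would record the presentation. I claim that $Y_3^+(k)$ is the associative $\C$-algebra on generators $\{D_{i,j}^{(m)},\tilde D_{i,j}^{(m)},G^{(m)}\mid i,j\in\{\pm1\},\ m\ge 0\}\cup\{E_i^{(m)}\mid i\in\{\pm1\},\ m>k\}$ subject to those relations among \eqref{EQ:oD1}--\eqref{EQ:FFreln} that involve no $F_i^{(n)}$ and no $E_i^{(r)}$ with $r\le k$ --- namely \eqref{EQ:oD1} (for the $D$, $\tilde D$, $G$ generators), \eqref{EQ:oD2}, \eqref{EQ:GG}, \eqref{EQ:oDD}, \eqref{EQ:oDs}, and the instances of \eqref{EQ:DEreln}, \eqref{EQ:GEreln}, \eqref{EQ:EEreln} whose $E$-superscripts all exceed $k$ --- together with the relations $[D_{i,j}^{(m)},G^{(n)}]=0$. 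By Theorem \ref{T:main1} these relations hold in $Y_3^+(k)$, and by the remark following the Definition they close up among the listed generators (the $E_i^{(r)}$ with $r\le k$ in \eqref{EQ:EEreln} cancel once $m,n>k$, while \eqref{EQ:DEreln} and \eqref{EQ:GEreln} never involve $F$'s). For the converse one checks there are no further relations: using \eqref{EQ:oD2} to eliminate the $\tilde D$'s, the case $m=n$ of \eqref{EQ:EEreln} to express the odd $G^{(t)}$ through lower $G$'s and $D$'s, \eqref{EQ:oDs} to restrict the $D$-indices, and the commutator relations \eqref{EQ:oDD}, \eqref{EQ:DEreln}, \eqref{EQ:GEreln}, \eqref{EQ:EEreln} (and $[D,G]=0$) to reorder, one rewrites any word in the generators as a $\C$-linear combination of those admissible PBW monomials of Theorem \ref{T:PBWI} built only from the $E_i^{(m)}$ with $m>k$, the $D$'s and the even $G$'s; these are a subset of the PBW basis of $Y_3^+$, hence linearly independent. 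So the presentation holds, and by its universal property it suffices to verify that the elements $\phi_k(D_{i,j}^{(m)})$, $\phi_k(\tilde D_{i,j}^{(m)})$, $\phi_k(G^{(m)})$, $\phi_k(E_i^{(m)})$ of $Y_3^+(k-1)\otimes U(\gl_1(\C))$ satisfy these relations.

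Now $\phi_k$ acts by $x\mapsto x\otimes 1$ on the $D$- and $\tilde D$-parts, $e_{0,0}$ is central in $U(\gl_1(\C))$, and $Y_3^+(k-1)\otimes 1$ commutes with $1\otimes U(\gl_1(\C))$; hence every relation involving only $D$'s, $\tilde D$'s and $G$'s (in particular \eqref{EQ:oD1}, \eqref{EQ:oD2}, \eqref{EQ:GG}, \eqref{EQ:oDD} and $[D,G]=0$) together with the symmetry \eqref{EQ:oDs} is preserved immediately, and only \eqref{EQ:DEreln}, \eqref{EQ:GEreln}, \eqref{EQ:EEreln} require work. For those I would pass to generating functions. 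Summing the geometric series in the definition of $\phi_k$ gives
\[
\phi_k(E_i(u))=E_i(u)\otimes(1+e_{0,0}u^{-1})\qquad\text{and}\qquad \phi_k(G(u))=G(u)\otimes\tfrac{(u+e_{0,0})(u+2-e_{0,0})}{u(u+2)},
\]
the first identity holding up to a correction supported in the single lowest degree $u^{-k}$, which does no harm because the relations to be checked constrain only degrees $>k$. Since the $1\otimes U(\gl_1)$-factors are central scalars, applying $\phi_k$ to the $F$-free part of the generating-function identities Theorem \ref{T:1} \eqref{EQ:DE}, \eqref{EQ:GEI}, \eqref{EQ:EEI} and sorting by powers of $e_{0,0}$, each relation splits into an $e_{0,0}^0$-component, which is literally the same relation already known in $Y_3^+(k-1)$, and higher $e_{0,0}$-components, which reduce to the same relations of $Y_3^+(k-1)$ with one $E$-superscript lowered by one (and, for \eqref{EQ:GEreln}, to linear combinations of \eqref{EQ:GEreln} for the $G^{(r)}$ with small $r$), together with elementary identities of rational functions such as $\tfrac1{u-v}\bigl((1+e_{0,0}v^{-1})-(1+e_{0,0}u^{-1})\bigr)=-e_{0,0}u^{-1}v^{-1}$ and $\tfrac{(u+e_{0,0})(u+2-e_{0,0})}{u(u+2)}=\tfrac{u+e_{0,0}}{u}\cdot\tfrac{u+2-e_{0,0}}{u+2}$. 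Relations \eqref{EQ:DEreln} and \eqref{EQ:GEreln} fall out of this cleanly, being linear and degree-shift compatible in the $E$-variables.

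The hard part is \eqref{EQ:EEreln}, equivalently Theorem \ref{T:1} \eqref{EQ:EEI}. Here the dressings of the two $E$'s by $1+e_{0,0}u^{-1}$ and $1+e_{0,0}v^{-1}$ do not commute with the relation in an obvious way, and one must show that the leftover terms this produces cancel against the discrepancy between the $G$-dressing $\tfrac{(\,\cdot\,+e_{0,0})(\,\cdot\,+2-e_{0,0})}{\,\cdot\,(\,\cdot\,+2)}$ and the product $\tfrac{u+e_{0,0}}{u}\cdot\tfrac{v+e_{0,0}}{v}$, applied to the four $\tilde D\,G$ terms with their rational coefficients $\tfrac1{u+v+2}$, $\tfrac{u-v}{(u-v-1)(u+v+2)}$, $\tfrac1{(u-v-1)(u+v+2)}$, $\tfrac1{(u-v-1)(2v+3)}$; the shift by $2$ hidden in $u+v+2$ and in $2v+3$ is exactly what balances the $(\,\cdot\,+2-e_{0,0})$ numerators. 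This finite but intricate rational-function bookkeeping, together with the careful PBW-straightening needed to pin down the presentation in the first step, is where essentially all the effort lies. Relations \eqref{EQ:DFreln}, \eqref{EQ:GFreln}, \eqref{EQ:EFreln}, \eqref{EQ:FFreln} need not be examined at all, since $Y_3^+(k)$ contains no $F$'s. (One could instead try to build $\phi_k$ as the restriction to $Y_3^+(k)$ of a homomorphism obtained from the coideal comultiplication of the Yangian $Y_3$ composed with the evaluation homomorphism $\operatorname{ev}$ on a rank-one block, which would make the homomorphism property automatic; but pinning down the image and checking the stated formulas for $\phi_k$ looks to require comparable effort.)
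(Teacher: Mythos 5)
Your core strategy --- check that the images of the generators satisfy the $DE$-, $GE$- and $EE$-relations --- is the same as the paper's, and your generating-function repackaging of $\phi_k$ is correct: resumming the definition does give $\phi_k(G(u))=G(u)\otimes\tfrac{(u+e_{0,0})(u+2-e_{0,0})}{u(u+2)}$, and with it the checks of \eqref{EQ:DEreln} and \eqref{EQ:GEreln} do become nearly formal, since the $F$- and $E(u)$-terms in Theorem \ref{T:1} contribute only nonnegative powers of $v$ and the low-degree corrections to $\phi_k(E_i(u))=E_i(u)\otimes(1+e_{0,0}u^{-1})$ (which occupy all degrees $u^{-1},\dots,u^{-k}$, not just $u^{-k}$) only touch coefficients that are not constrained. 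The paper instead argues mode by mode, comparing the coefficients of $\otimes 1$, $\otimes e_{0,0}$, $\otimes e_{0,0}^2$ directly; your formulation is arguably cleaner for those two relations.

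Two genuine gaps remain, however. First, the presentation of $Y_3^+(k)$ on which your appeal to a universal property rests is not established by the sketch you give: the straightening rule ``use the case $m=n$ of \eqref{EQ:EEreln} to express the odd $G^{(t)}$ through lower $G$'s and $D$'s'' is available in $Y_3^+(k)$ only for $m=n>k$, and its antisymmetrization isolates $G^{(2m-1)}$ only for $2m-1\ge 2k+1$; none of the relations you list lets you rewrite $G^{(1)},G^{(3)},\dots,G^{(2k-1)}$ in terms of the admissible $D$'s and even $G$'s, so the claimed spanning set is not obtained, and you cannot simply enlarge the spanning set by these elements because their images in $Y_3^+$ are polynomials in the other generators, which destroys the linear-independence step of your injectivity argument. (The paper sidesteps this entirely: its proof checks relations without ever exhibiting a presentation of $Y_3^+(k)$, so on this point you are attempting more than the paper, but the attempt as written does not close.) Second, the case carrying essentially all of the content --- preservation of \eqref{EQ:EEreln} --- is exactly where you stop computing: the mismatch between the dressing $h(u)h(v)$ of $[E_i(u),E_j(v)]$, the dressings of the quadratic $E$-terms, and the dressing $\phi_k(G(w))$ entering the four $\tilde D\,G$-terms is asserted to cancel because ``the shift by $2$ balances the $(\,\cdot\,+2-e_{0,0})$ numerators,'' but no cancellation is verified. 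The paper's proof is devoted almost entirely to this verification (matching the $e_{0,0}$- and $e_{0,0}^2$-components after reindexing, using identities such as $\binom{m-1}{x-n+1}-\binom{m-2}{x-n}=\binom{m-2}{x-n+1}$), so as it stands your proposal leaves the heart of the theorem unproven.
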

   \begin{proof}
   We need to check that $\phi_k$ respects the relations  \eqref{EQ:DEreln}, \eqref{EQ:GEreln}, and \eqref{EQ:EEreln}.
   It is straight forward to use \eqref{EQ:DEreln} to check that $\phi_k([D_{i,j}^{(m)}, E_{l,0}^{(n)}]) = [\phi_k(D_{i,j}^{(m)}), \phi_k(E_{l,0}^{(n)})]$.
   
   Next, using \eqref{EQ:GEreln} and letting $p=r-s$ and $q=s$, 
   we calculate that
   \begin{align*}
   \phi_k&([G^{(m)},E_i^{(n)}]) = \\
      &- \sum_{p=0}^{m-1} \left(G^{(p)} \otimes 1   
     - \bigg(\sum_{l=1}^{p-1} (-2)^l G^{(p-l-1)} \bigg) \otimes e_{0,0}
  - \bigg(\sum_{l=0}^{p-2} (-2)^l G^{(p-l-2)} \bigg) \otimes e_{0,0}^2 \right) \\
   &\qquad \qquad \times \left(E_i^{(m+n-1-p)} \otimes 1 + E_i^{(m+n-2-p)} \otimes e_{0,0} \right )  \\
   & + \sum_{p+q \leq m-1} (-2)^p(-1)^q \begin{pmatrix} p+q \\ q \end{pmatrix} \left(E_i^{(n+q)}\otimes 1 + E_i^{(n+q-1)} \otimes e_{0,0} \right)  \\
     &\qquad \times 
       \Bigg( 
      G^{(m-1-p-q)} \otimes 1 
  - \bigg(\sum_{l=1}^{m-2-p-q} (-2)^l G^{(m-l-2-p-q)} \bigg) \otimes e_{0,0} \\
  &\qquad \qquad \qquad - \bigg(\sum_{l=0}^{m-3-p-q} (-2)^l G^{(m-l-3-p-q)} \bigg) \otimes e_{0,0}^2
  \Bigg) 
  \end{align*}
  Note that the $\otimes e_{0,0}$ terms of this has coefficient $A+B+C+D$ where
  \[
  A = - \sum_{p=0}^{m-1} G^{(p)} E_i^{(m+n-2-p)} ,
  \]
  \[
  B = \sum_{p=0}^{m-1}\sum_{l=1}^{p-1} (-2)^l G^{(p-l-1)} E_i^{(m+n-1-p)},
  \]
  \[
   C = 
   \sum_{p+q \leq m-1}  
   (-2)^p(-1)^q \begin{pmatrix} p+q \\ q \end{pmatrix} 
   E_i^{(n+q-1)}  G^{(m-1-p-q)},
  \]
 and
  \[
  D = 
   -\sum_{p+q \leq m-1} 
   \sum_{l=1}^{m-2-p-q} 
   (-2)^p(-1)^q \begin{pmatrix} p+q \\ q \end{pmatrix} 
   (-2)^l E_i^{(n+q)} G^{(m-l-2-p-q)}.
   \]
   
   Note that 
   \begin{align*}
  B &= \sum_{p=2}^{m-1}\sum_{l=1}^{p-1} (-2)^l G^{(p-l-1)} E_i^{(m+n-1-p)}  \\
  &=
 \sum_{l=1}^{m-2}\sum_{p=l+1}^{m-1} (-2)^l G^{(p-l-1)} E_i^{(m+n-1-p)}  \\
 &= 
 \sum_{l=1}^{m-2}\sum_{p=0}^{m-l-2} (-2)^l G^{(p)} E_i^{(m+n-l-2-p)}.
 \end{align*}
 Furthermore,
\begin{align*}
  D &= 
   -\sum_{p+q \leq m-1} 
   \sum_{l=1}^{m-2-p-q} 
   (-2)^p(-1)^q \begin{pmatrix} p+q \\ q \end{pmatrix} 
   (-2)^l E_i^{(n+q)} G^{(m-l-2-p-q)} \\
   &=
   -\sum_{p+q \leq m-3} 
   \sum_{l=1}^{m-2-p-q} 
   (-2)^p(-1)^q \begin{pmatrix} p+q \\ q \end{pmatrix} 
   (-2)^l E_i^{(n+q)} G^{(m-l-2-p-q)} \\
   &=
   -\sum_{r=0}^{m-3}
   \sum_{p+q =r} 
   \sum_{l=1}^{m-2-r} 
   (-2)^p(-1)^q \begin{pmatrix} p+q \\ q \end{pmatrix} 
   (-2)^l E_i^{(n+q)} G^{(m-l-2-p-q)} \\
   &=
   -\sum_{r=0}^{m-3}
   \sum_{l=1}^{m-2-r} 
   \sum_{p+q =r} 
   (-2)^p(-1)^q \begin{pmatrix} p+q \\ q \end{pmatrix} 
   (-2)^l E_i^{(n+q)} G^{(m-l-2-p-q)} \\
   &=
   -\sum_{l=1}^{m-2}
   \sum_{r=0}^{m-2-l} 
   \sum_{p+q =r} 
   (-2)^p(-1)^q \begin{pmatrix} p+q \\ q \end{pmatrix} 
   (-2)^l E_i^{(n+q)} G^{(m-l-2-p-q)} \\
   &=
   -\sum_{l=1}^{m-2}
   \sum_{p+q \leq m-2-l } 
   (-2)^p(-1)^q \begin{pmatrix} p+q \\ q \end{pmatrix} 
   (-2)^l E_i^{(n+q)} G^{(m-l-2-p-q)}.
\end{align*}

  We also calculate that
  \begin{align*}
   [\phi_k(G^{(m)}), &\phi_k(E_i^{(n)})] =  \\
      & \Big[ 
  G^{(m)} \otimes 1 
  - \left(\sum_{l=1}^{m-1} (-2)^l G^{(m-l-1)} \right) \otimes e_{0,0} 
  - \left(\sum_{l=0}^{m-2} (-2)^l G^{(m-l-2)} \right) \otimes e_{0,0}^2,  \\
&\qquad      E_i^{(n)} \otimes 1 + E_i^{(n-1)} \otimes e_{0,0} \Big].
  \end{align*}
  Now we calculate that the coefficient of the $\otimes e_{0,0}$ term of this is
\begin{align*}
  -&\sum_{p=0}^{m-1} G^{(p)} E_i^{(m+n-2-p)} 
   + \sum_{p+q \leq m-1} (-2)^p (-1)^q 
   \begin{pmatrix} p+q \\ q \end{pmatrix}
   E_i^{(n+q-1)} G^{(m-1-p-q)} \\
  &\quad   + 
  \sum_{l=1}^{m-1} (-2)^l \sum_{p=0}^{m-l-2} G^{(p)} E_i^{(m+n-l-2-p)}  \\
   &\quad -
  \sum_{l=1}^{m-1} (-2)^l
  \sum_{p+q \leq m-l-2} (-2)^p (-1)^q 
  \begin{pmatrix} p+q \\ q \end{pmatrix} E_i^{(n+q)} G^{(m-l-2-p-q)}.
  \end{align*} 
 Now it is easy to see that this equals $A + C + B +D$.
 
 The calculation which shows that the $\otimes 1, \otimes e_{0,0}^2$, and $\otimes e_{0,0}^3$ terms from 
   $\phi_k([G^{(m)}, E_i^{(n)}])$ and
   $[\phi_k(G^{(m)}), \phi_k(E_i^{(n)})]$ agree is nearly identical, so we omit it.
   
   Next we need to show that
 \begin{equation} \label{EQ:EE}
    \phi_k([E_i^{(m)},E_j^{(n)}]) = [\phi_k(E_i^{(m)}),\phi_k(E_j^{(n)})].
 \end{equation}
 Using \eqref{EQ:EEreln} and the definition of $\phi_k$, we get that
 \begin{align} \label{EQ:LHS}
    \phi([E_i^{(m)},E_j^{(n)}]) &= 
    \sum_{r=0}^{n-1} \left( E_j^{(m+n-1-r)}\otimes 1 + E_j^{(m+n-2-r)} \otimes e_{0,0}\right) \left(E_i^{(r)}\otimes 1 + E_i^{(r-1)} \otimes e_{0,0}\right) \\ \notag
    & -\sum_{r=0}^{m-1} \left( E_j^{(m+n-1-r)}\otimes 1 + E_j^{(m+n-2-r)} \otimes e_{0,0}\right) \left(E_i^{(r)}\otimes 1 + E_i^{(r-1)} \otimes e_{0,0}\right) \\ \notag
    & - \sum_{r+s = m-1} (-2)^r (-1)^s \begin{pmatrix} m-1 \\ s \end{pmatrix} \\ \notag
    & \quad \times
      \sum_{p+q=n+s} \tilde D_{j,-i}^{(p)} 
      \left( G^{(q)} \otimes 1 
        - \sum_{l=1}^{q-1} (-2)^l G^{(q-l-1)} \otimes e_{0,0} \right. \\ \notag
   &\qquad \qquad \qquad \qquad \qquad \left.     - \sum_{l=0}^{q-2} (-2)^l G^{(q-l-2)} \otimes e_{0,0}^2 \right).
 \end{align}
 Also using the definition of $\phi_k$ we get
 \begin{equation} \label{EQ:RHS}
    [\phi_k(E_i^{(m)}),\phi_k(E_j^{(n)})] = 
    [E_i^{(m)} \otimes 1 + E_i^{(m-1)} \otimes e_{0,0}, 
      E_j^{(n)} \otimes 1 + E_j^{(n-1)} \otimes e_{0,0} ].
    \end{equation}
    Using \eqref{EQ:EEreln}, it is easy to see that the coefficients  of $\otimes 1$ in \eqref{EQ:LHS} and \eqref{EQ:RHS} agree.
    
    Now the coefficient of $\otimes e_{0,0}$ in \eqref{EQ:LHS} is
    \begin{align} \label{EQ:LHS2}
     &\sum_{r=0}^{n-1} \left(E_j^{(m+n-2-r)} E_i^{(r)} + E_j^{(m+n-1-r)} E_i^{(r-1)} \right) \\ \notag
     &\quad - \sum_{r=0}^{m-1} \left(E_j^{(m+n-2-r)} E_i^{(r)} + E_j^{(m+n-1-r)} E_i^{(r-1)} \right) \\ \notag
     &\quad + \sum_{r+s=m-1} (-2)^r (-1)^s \binom{m-1}{s} \sum_{p+q=n+s} D_{j,-i}^{(p)} \sum_{l=1}^{q-1} (-2)^l G^{(q-l-1)},
    \end{align}
    and using the definition of $\phi_k$ we calculate that the coefficient of $\otimes e_{0,0}$ in \eqref{EQ:RHS} is
    \begin{align} \label{EQ:RHS2}
        & \sum_{r=0}^{n-2} E_j^{(m+n-2-r)} E_i^{(r)} - \sum_{r=0}^{m-1} E_j^{(m+n-2-r)} E_i^{(r)} \\ \notag
        &\quad  - \sum_{r+s= m-1}(-2)^r(-1)^s \binom{m-1}{s} \sum_{p+q=n-1+s} \tilde D_{j,-i}^{(p)} G^{(q)} \\ \notag
        & \quad + \sum_{r=0}^{n-1} E_j^{(m+n-2-r)} E_i^{(r)} - \sum_{r=0}^{m-2} E_j^{(m+n-2-r)} E_i^{(r)} \\ \notag
        &\quad  - \sum_{r+s= m-2}(-2)^r(-1)^s \binom{m-2}{s} \sum_{p+q=n+s} \tilde D_{j,-i}^{(p)} G^{(q)}.
    \end{align}
Is is straight forward to check that sums of the terms involving $E_j^{(x)} E_i^{(y)}$ in \eqref{EQ:LHS2} and \eqref{EQ:RHS2} agree.
Now we consider the remaining terms in \eqref{EQ:LHS2}, ie we consider
    \begin{equation} \label{EQ:LHS3}
     \sum_{r+s=m-1} (-2)^r (-1)^s \binom{m-1}{s} \sum_{p+q=n+s} D_{j,-i}^{(p)} \sum_{l=1}^{q-1} (-2)^l G^{(q-l-1)},
    \end{equation}

We define the degree of $\tilde D_{-j,i}^{(x)} G^{(y)}$ to be $x+y$.  Now we observe that only terms with degrees between $0$ and $m+n-3$ inclusive occur.
Note that the in order to get a term of degree $x$, we must have that $l=n+s-x-1$.  Furthermore, since $l \geq 1$, we see that such an $l$ will only
occur when $s \geq x-n+2$.  Also since $q-1 \geq l =n+s-x-1$, such an $l$ will only occur  when $q \geq n+s-x$.
So the degree $x$ terms in \eqref{EQ:LHS3} are
\begin{align*}
   &\sum_{s=2+x-n}^{m-1} (-2)^{m-1-s} (-1)^s \binom{m-1}{s} \sum_{q=n+s-x}^{n+s} \tilde D_{j,-i}^{(n+s-q)} (-2)^{n+s-x-1}G^{(q+x-n-s)} = \\
   &\qquad 
   (-2)^{m+n-x-2} \sum_{s=0}^{m-1} (-1)^s \binom{m-1}{s} \sum_{p+q=x}\tilde D_{j,-i}^{(p)} G^{(q)}.
\end{align*}
Let $S_x = \sum_{y+z=x}\tilde D_{j,-i}^{(y)} G^{(z)}$, so we have that the degree $x$ terms in \eqref{EQ:LHS3} are equal to
\[
   (-2)^{m+n-x-2} \sum_{s=2+x-n}^{m-1} (-1)^s \binom{m-1}{s} S_x .
   \]
   Using the classical identity $\sum_{l=0}^n (-1)^l \binom{n}{l} = 0$, we see that this is only non-zero when $n-1 \leq x \leq n+m-3$.
   In this case a straight forward inductive proof shows that 
\[
   \sum_{s=2+x-n}^{m-1} (-1)^s \binom{m-1}{s} = (-1)^{x-n} \binom{m-2}{x-n+1}.
   \]
   Thus the sum of the degree $x$ terms in \eqref{EQ:LHS3} is 
   \begin{equation} \label{EQ:LHS4}
   (-2)^{m+n-x-2} (-1)^{x-n} \binom{m-2}{x-n+1} S_x.
   \end{equation}
   Now the  remaining terms from \eqref{EQ:RHS2} which do not involve the $E_j^{(y)} E_i^{(z)}$ terms are 
   \begin{align*}
        &- \sum_{r+s= m-1}(-2)^r(-1)^s \binom{m-1}{s} S_{n-1+s} 
        \quad  - \sum_{r+s= m-2}(-2)^r(-1)^s \binom{m-2}{s} S_{n+s}.
   \end{align*}
   A degree $x$ term only occurs here if $n-1 \leq x \leq n+m-3$, in which case the sum of the degree $x$ terms is
   \begin{align*}
        &- (-2)^{m+n-x-2} (-1)^{x-n+1} \binom{m-1}{x-n+1} S_{x}  
         - (-2)^{m+n-x-2}(-1)^{x-n} \binom{m-2}{x-n} S_{x}  \\
& \qquad      =   (-2)^{m+n-x-2}(-1)^{x-n} \left(\binom{m-1}{x-n+1} - \binom{m-2}{x-n} \right) S_x \\
&\qquad  =    (-2)^{m+n-x-2} (-1)^{x-n} \binom{m-2}{x-n+1} S_x.
   \end{align*}
   Comparing this with \eqref{EQ:LHS4}, we see that the $\otimes e_{0,0}$ terms on both sides of
   \eqref{EQ:EE}
    agree.
    
    The calculation which shows that the $\otimes e_{0,0}^2$ terms on both sides of \eqref{EQ:EE} agree is similar to the 
    $\otimes e_{0,0}$ case, so we omit it.
   \end{proof}

Fix positive integers $k,n$,
and let $N = 3n+2k$.
Let $\g = \sp_{N} (\C)$ if $N$ is even, and let $\g = \so_{N}(\C)$ if $N$ is odd.
To complete this section we will construct the homomorphism $\phi : Y_3^+ \to U(\g)$ from the introduction.

Now $\g$ is a subalgeba of $\gl_N(\C)$, which we consider to be generated by the matrix units $\{e_{i,j} \mid i,j \in \mathcal{I}_N\}$.
For any integer $i$ let 
\[
  \sigma_i = \begin{cases}
               1  & \text{if $N$ is even or $i \geq 0$}; \\
               -1 & \text{if $N$ is odd and $i < 0$}.
           \end{cases}
\]
Also  for $i,j \in \mathcal{I}_N$ let
\[
  f_{i,j} = e_{i,j} - \sigma_i \sigma_j e_{-j,-i}.
\]

Now $\g$ can be realized as the subalgebra of $\gl_N(\C)$ generated by $\{f_{i,j} \mid i,j \in \mathcal{I}_N\}$.

To define $\phi$ we use a three-row box diagram $\pi$ with $n$ rows in the top and bottom rows and $2k+n$ boxes in the middle row arranged symmetrically.
Furthermore we label the boxes of $\pi$ with the elements of $\mathcal{I}_N$ is a skew-symmetric manner.
For example if $n=2$, $k=2$, then one choice of labeling is
 \begin{equation*}
 \pi = 
    \begin{array}{c}
\begin{picture}(120,60)
\put(40,0){\line(1,0){40}} 
\put(0,20){\line(1,0){120}}
\put(0,40){\line(1,0){120}} 
\put(40,60){\line(1,0){40}}
\put(0,20){\line(0,1){20}} 
\put(20,20){\line(0,1){20}} 
\put(40,0){\line(0,1){60}} 
\put(60,0){\line(0,1){60}} 
\put(80,0){\line(0,1){60}} 
\put(100,20){\line(0,1){20}} 
\put(120,20){\line(0,1){20}} 
\put(8,30){\makebox(0,0){{-5}}} 
\put(28,30){\makebox(0,0){{-4}}} 
\put(48,50){\makebox(0,0){{-3}}} 
\put(48,30){\makebox(0,0){{-2}}} 
\put(48,10){\makebox(0,0){{-1}}} 
\put(70,50){\makebox(0,0){{1}}} 
\put(70,30){\makebox(0,0){{2}}} 
\put(70,10){\makebox(0,0){{3}}} 
\put(90,30){\makebox(0,0){{4}}} 
\put(110,30){\makebox(0,0){{5}}} 
\end{picture}
\end{array}
\end{equation*}

Now let $\mathfrak{l}$ be the Levi subalgebra of $\g$ spanned by $\{f_{i,j} \mid i,j$ are in the same column $\}$.
Note that 
\[
\mathfrak{l} \cong 
\begin{cases}
\gl_3(\C)^{\oplus n/2} \bigoplus \gl_1(\C)^{\otimes k} & \text{ if $n$ is even};  \\
\so_3(\C) \bigoplus \gl_3(\C)^{\oplus (n-1)/2} \bigoplus \gl_1(\C)^{\otimes k} & \text{ if $n$ is odd.}
\end{cases}
\]

Now $\phi_1 \circ \phi_2 \circ \dots \circ \phi_k: Y_3^+(k) \to Y_3^+ \otimes U(\gl_1(\C))^{\otimes k}$.
In this and is subsequent formula we are abusing notation somewhat by considering 
\[
\phi_i : Y_3^+ \otimes U(\gl_1(\C))^{\otimes j} \to
Y_3^+ \otimes U(\gl_1(\C))^{\otimes j+1} 
\]

to be
$\phi_i \otimes 1 ^{\otimes j}$.
In \cite[(1.12)]{Br} a map $\kappa_n$ is defined where
\[
  \kappa_n : Y_3^+ \to U(\gl_3(\C))^{\oplus n/2} 
  \]
  if $n$ is even and
  \[
  \kappa_n : Y_3^+ \to U(\so_3(\C)) \bigoplus U(\gl_3(\C))^{\oplus (n-1)/2}
  \]
  if $n$ is odd.
  
  Finally we define the map $\pi$ from the introduction via
  \[
\pi = \kappa_n \circ \phi_1 \circ \phi_2 \circ \dots \circ \phi_k: Y_3^+(k) \to U(\mathfrak{l}) \subseteq U(\g). 
\]

\section{The center of $Y_3^+$} \label{S:center}
An explicit formula for the center of all twisted Yangians is given in \cite{M3}.
We anticipate that the center will be crucially important in the study of the related finite $W$-algebras, so we express it here in terms of our new generators.
One potential advantage of the Drinfeld generators is that 
formula for the center of $Y^+_3$ is substantially simpler in terms of the Drinfeld generators.

In \cite{M3} the center of $Y^+_n$ is defined in terms of a the Sklyanin Determinant $\operatorname{sdet} S(u) \in Y^+_n[[u^{-1}]]$.
Now \cite[Theorem 3.14]{M3} says that the coefficients 
$\operatorname{sdet} S(u)$ 
are in the center of $Y^+_n$, and moreover the coefficients of the even powers of 
$\operatorname{sdet} S(u)$ 
generate the center of $Y^+_3$ and 
are algebraically independent.

Let 
\begin{equation}
C(u) = D_{-1,-1}(-u) D_{-1,-1}(u-1) G(u-2) 
- D_{-1,1}(-u) D_{1,-1}(u-1) G(u-2).
\end{equation}

\begin{Theorem}
The series $C(u)$ equals the Sklyanin Determinant $\operatorname{sdet} S(u)$ for $Y^+_3$.
Thus all of the coefficients of $C(u)$ lie in the center of $Y^+_3$, and moreover the coefficients of the even powers
of $C(u)$ generate the center of $Y^+_3$ and are algebraically independent.
\end{Theorem}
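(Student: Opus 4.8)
The plan is to identify $C(u)$ with the Sklyanin determinant $\operatorname{sdet} S(u)$ by feeding the Gauss factorization \eqref{EQ:Smatrix} into the defining formula of \cite{M3}, after which the three assertions of the theorem are precisely the content of \cite[Theorem 3.14]{M3}. Recall from \eqref{EQ:Smatrix2} that the entire $S$-matrix is recovered from the Gauss data: $S_{i,j}(u) = D_{i,j}(u)$ for $i,j\in\{\pm 1\}$, $S_{i,0}(u) = D_{i,a}(u)E_{a,0}(u)$, $S_{0,j}(u) = F_{0,a}(u)D_{a,j}(u)$, and $S_{0,0}(u) = F_{0,a}(u)D_{a,b}(u)E_{b,0}(u) + G(u)$. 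The Sklyanin determinant of \cite{M3} is characterized by an identity, in $M_3(\C)^{\otimes 3}\otimes Y_3^+[[u^{-1}]]$, of the shape
\[
    A_3\, S^{[1]}(u)\, S^{[2]}(u-1)\, S^{[3]}(u-2) \;=\; \operatorname{sdet} S(u)\cdot A_3 ,
\]
where $A_3$ is the antisymmetrizer, the index set is ordered $(-1,1,0)$ as in $\S$\ref{S:IDP}, and the precise normalization is that of \cite{M3}.

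I would then insert the Gauss factorization, written as (lower-unitriangular)$\,\cdot\,\Delta(v)\,\cdot\,$(upper-unitriangular) with $\Delta(v) = \operatorname{diag}(D(v),G(v))$, into the left-hand side, and show that the unitriangular factors drop out. This is the twisted-Yangian analogue of the classical fact that the quantum determinant does not see the triangular Gauss factors: it follows from the vanishing identities for $A_3$ composed with the $R$- and $R'$-matrices at the relevant integer shifts --- the same identities that define $\operatorname{sdet}$ --- together with the commutation relations of Theorem \ref{T:1}. One is left with the identical expression with each $S$ replaced by $\Delta$, and since $\Delta(v)$ is block diagonal, with the $2\times 2$ block $D(v)$ in the $\{-1,1\}$ positions (which come first in the ordering $(-1,1,0)$) and the scalar $G(v)$ in the $0$ position, the product factors and one gets
\[
    \operatorname{sdet} S(u) \;=\; \operatorname{sdet}^{(2)} D(u)\cdot G(u-2),
\]
the shift by $2$ being the number of indices preceding $0$. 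Here $\operatorname{sdet}^{(2)} D(u)$ is the Sklyanin determinant of the matrix $D(u)$, which is meaningful because the $D_{i,j}(u)$ satisfy the relations of the twisted Yangian for $\so_2(\C)$ by Theorem \ref{T:1} \eqref{EQ:D} and \eqref{EQ:DsymI}.

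It remains to compute $\operatorname{sdet}^{(2)} D(u)$ from the $n=2$ case of the same defining identity: the antisymmetrizer $A_2$ extracts from $D^{[1]}(u)\,D^{[2]}(u-1)$ the combination $D_{-1,-1}(u)D_{1,1}(u-1) - D_{1,-1}(u)D_{-1,1}(u-1)$ (up to the conventions of \cite{M3}), and rewriting the second factor of each product by means of the symmetry relation \eqref{tyreln2}, equivalently Theorem \ref{T:1} \eqref{EQ:DsymI} --- which is exactly what injects a negated spectral parameter into the twisted determinant --- turns this into $D_{-1,-1}(-u)D_{-1,-1}(u-1) - D_{-1,1}(-u)D_{1,-1}(u-1)$. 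Combining with the previous display gives $C(u) = \operatorname{sdet} S(u)$, and then the centrality of the coefficients of $C(u)$, the generation of the center of $Y_3^+$ by the coefficients of the even powers of $C(u)$, and their algebraic independence are precisely \cite[Theorem 3.14]{M3}.

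The step I expect to be the genuine obstacle is the bookkeeping of spectral parameters and normalization: confirming that the shifts, and the negation produced by the twisted symmetry relation, come out to exactly $-u$, $u-1$ and $u-2$, and that the orthogonal (rather than $\gl_n$) setting leaves no residual scalar series $\gamma_3(u)$ in front of the right-hand side --- and, should such a factor appear, checking that it is already absorbed into the conventions of \cite{M3} or else correcting the stated $C(u)$. A robust way to dispose of all of this at once would be to prove the general ``triangular-factor multiplicativity'' of $\operatorname{sdet}$ for $Y_n^+$ directly from the reflection equation \eqref{tyreln1} and the symmetry relation \eqref{tyreln2}, after which the identification of $C(u)$ is just the $n=3$ specialization plus the elementary $2\times 2$ computation above; alternatively one can substitute \eqref{EQ:Smatrix2} into the explicit $n=3$ formula of \cite{M3} and cancel the $E$- and $F$-terms by hand. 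By comparison with the spectral-parameter issue, those cancellations and the $2\times 2$ computation are routine.
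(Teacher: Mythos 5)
Your overall strategy is genuinely different from the paper's: the paper proves the identity by brute force, substituting the Gauss data \eqref{EQ:Smatrix2} into the explicit six-term expansion of $\operatorname{sdet}S(u)$ from \cite[Theorem 3.12]{M3} and cancelling everything by hand. Your main route instead rests on a structural lemma --- that $\operatorname{sdet}$ is blind to the unitriangular Gauss factors, so that $\operatorname{sdet}S(u)=\operatorname{sdet}^{(2)}D(u)\,G(u-2)$ --- and this is where the genuine gap lies. That lemma is asserted by analogy with the quantum determinant of $Y(\gl_n)$, but it is never proved, and for the twisted Yangian it does not follow formally from the $\gl_n$ argument: the defining identity for $\operatorname{sdet}$ in \cite{M3} has the transposed $R'$-factors interleaved between the $S$-factors, and the symmetry relation \eqref{tyreln2} enters as well, so commuting the lower- and upper-unitriangular factors past the antisymmetrizer and the $R'$'s is precisely the nontrivial content of the theorem. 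In effect your key lemma is equivalent to the statement being proved; saying it ``follows from the vanishing identities \dots together with the commutation relations of Theorem \ref{T:1}'' names the tools but supplies no argument, and your own hedge that one may have to ``correct the stated $C(u)$'' if a residual scalar series appears shows that the normalization and spectral-parameter bookkeeping --- which you yourself identify as the obstacle --- was not carried out. The final $2\times 2$ step has the same problem in miniature: passing from $D_{-1,-1}(u)D_{1,1}(u-1)-D_{1,-1}(u)D_{-1,1}(u-1)$ to $D_{-1,-1}(-u)D_{-1,-1}(u-1)-D_{-1,1}(-u)D_{1,-1}(u-1)$ is not a factor-by-factor application of Theorem \ref{T:1} \eqref{EQ:DsymI}; that relation produces $\tfrac{1}{2u}$-correction terms whose cancellation in the combination must itself be checked.

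Your fallback route is exactly the paper's proof, but it is mischaracterized as ``routine.'' After substituting \eqref{EQ:Smatrix2} into the six terms of \cite[Theorem 3.12]{M3}, the $E$- and $F$-terms do not cancel formally: one must repeatedly commute factors using Theorem \ref{T:1} and the reflection relation \eqref{EQ:SS}, and each such commutation produces $\tfrac{1}{2u-1}$-tails. The paper's proof reduces everything to showing that a twelve-term residual expression $A$ (multiplying $\tfrac{1}{2u-1}S_{1,0}(u-2)$) vanishes, and that cancellation occupies several pages and requires applying \eqref{EQ:SS} in a specific order with careful tracking of the lower-degree corrections. So as it stands the proposal either assumes the hard part (the multiplicativity lemma) or defers it to a computation it underestimates; to make the first route work you would need to actually prove the Gauss-factorization property of $\operatorname{sdet}$ for $Y_n^+$ (e.g.\ via a quasideterminant argument), which would be a nice alternative but is not easier than, and is not contained in, what you have written.
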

\begin{proof}
   By \cite[Theorem 3.12]{M3}, where we order indices $(-1,1,0)$, we have that 
   \begin{align} \label{EQ:tmp35}
      \operatorname{sdet} S(u) &= 
      S_{-1,-1}(-u) S_{-1,-1}(u-1) S_{0,0}(u-2)
      - S_{-1,-1}(-u) S_{0,-1}(u-1) S_{-1,0}(u-2) \\ \notag
      &\quad - S_{-1,1}(-u) S_{1,-1}(u-1) S_{0,0}(u-2)
      + S_{1,0}(-u) S_{-1,1}(u-1) S_{1,0}(u-2) \\ \notag
      &\quad - S_{1,1}(-u) S_{0,1}(u-1) S_{1,0}(u-2) 
     + S_{-1,0}(-u) S_{1,-1}(u-1) S_{-1,0}(u-2).
   \end{align}
   We examine the six terms here one at a time.  
   
   First we examine the first term.  
   By \eqref{EQ:Smatrix2}
  \begin{align} \label{EQ:tmp30}
      S_{-1,-1}(-u) &S_{-1,-1}(u-1) S_{0,0}(u-2) = \\ \notag 
      &D_{-1,-1}(-u) D_{-1,-1}(u-1) (G(u-2) + F_{a}(u-2) D_{a,b}(u-2)E_b(u-2)),
  \end{align}
   where we are summing over $a,b \in \{\pm 1\}$ as usual.
   By Theorem \ref{T:1} \eqref{EQ:EFsymm}, $F_a(u-2) = E_{-a}(-u)$.
   Furthermore, by substituting $u$ with $u-1$ and $v$ with $-u$ in Theorem \ref{T:1} \eqref{EQ:DE} we get that
  \begin{align}
   D_{-1,-1}(u-1) E_{-a}(-u) &= E_{-a}(-u) D_{-1,-1}(u-1) + \delta_{-1,a}(F_c(u-1)-E_{-c}(-u)) D_{c,-1}(u-1) \\
   &\quad + \frac{\delta_{1,a}}{2u-1} D_{-1,c}(u-1) (E_c(-u) - E_c(u-1)),
  \end{align}
  where we are summing over $c \in \{\pm 1\}$.
  Plugging this into \eqref{EQ:tmp30} gives
  \begin{align} \label{EQ:tmp31}
      S_{-1,-1}(-u) &S_{-1,-1}(u-1) S_{0,0}(u-2) =  \\ 
      &D_{-1,-1}(-u) D_{-1,-1}(u-1) G(u-2) \\
   & +D_{-1,-1}(-u) E_{-a}(-u) D_{-1,-1}(u-1) D_{a,b}(u-2) E_b(u-2) \\
 &+ D_{-1,-1}(-u) F_{c}(u-1) D_{c,-1}(u-1) D_{-1,b}(u-2) E_b(u-2) \\
  &- D_{-1,-1}(-u) E_{-c}(-u) D_{c,-1}(u-1) D_{-1,b}(u-2) E_b(u-2) \\
   &+ \frac{1}{2u-1} D_{-1,-1}(-u) D_{-1,c}(u-1) (E_c(-u) - E_c(u-1))D_{1,b}(u-2) E_b(u-2).
  \end{align}
  
  Next we look at the second term of \eqref{EQ:tmp35}.
   By \eqref{EQ:Smatrix2}
  \begin{align} \label{EQ:tmp32}
      &- S_{-1,-1}(-u) S_{0,-1}(u-1) S_{-1,0}(u-2)  =  \\ \notag
      &\qquad \qquad \qquad -D_{-1,-1}(-u) F_a(u-1) D_{a,-1}(u-1) D_{-1,b}(u-2) E_b(u-2).
  \end{align}

  Next we look at the third term of \eqref{EQ:tmp35}.
   By \eqref{EQ:Smatrix2}
   \begin{align} \label{EQ:tmp36}
      &- S_{-1,1}(-u) S_{1,-1}(u-1) S_{0,0}(u-2) = \\ \notag
      &\qquad \qquad \qquad -D_{-1,1}(-u) D_{1,-1}(u-1)\left(G(u-2) + F_{a}(u-2) D_{a,b}(u-2) E_b(u-2)\right).
   \end{align}
   Now by using  Theorem \ref{T:1} \eqref{EQ:EFsymm} and Theorem \ref{T:1} \eqref{EQ:DE} we get that
   \begin{align} \notag
     &D_{1,-1}(u-1) F_a(u-2) = D_{1,-1}(u-1) E_{-a}(-u) =  \\ \notag
     &\qquad E_{-a}(-u)D_{1,-1}(u-1) + \delta_{1,a} \left(F_c(u-1)-E_{-c}(-u)\right) D_{c,-1}(u-1) \\ \notag
     &\qquad + \frac{\delta_{1,a}}{2u-1} D_{1,c}(u-1) (E_c(-u)-E_c(u-1)).
   \end{align}
   Plugging this into \eqref{EQ:tmp36} gives
   \begin{align} \label{EQ:tmp37}
      - S_{-1,1}(-u) &S_{1,-1}(u-1) S_{0,0}(u-2) = \\ 
   & -D_{-1,1}(-u) D_{1,-1}(u-1)G(u-2)  \\
   & -D_{-1,1}(-u) E_{-a}(-u) D_{1,-1}(u-1)D_{a,b}(u-2)E_b(u-2)  \\
   & -D_{-1,1}(-u) F_{c}(u-1) D_{c,-1}(u-1)D_{1,b}(u-2)E_b(u-2)  \\
   & +D_{-1,1}(-u) E_{-c}(-u) D_{c,-1}(u-1)D_{1,b}(u-2)E_b(u-2)  \\
  & - \frac{1}{2u-1} D_{-1,1}(-u) D_{1,c}(u-1) (E_c(-u)-E_c(u-1))D_{1,b}(u-2) E_b(u-2).
      \end{align}

  Next we look at the fourth term of \eqref{EQ:tmp35}.
  First, using \eqref{EQ:SS}, we see that
  \begin{align} \label{EQ:tmp43}
     S_{1,0}(-u) &S_{-1,1}(u-1) = S_{-1,1}(u-1) S_{1,0}(-u)  \\ \notag
     &\quad + S_{1,1}(-u) S_{0,1}(u-1) - S_{-1,-1}(u-1) S_{-1,0}(-u) \\ \notag
     &\quad - \frac{1}{2u-1} (S_{-1,0}(-u) S_{1,1}(u-1) - S_{-1,0}(u-1) S_{1,1}(-u)) \\ \notag
     &\quad + \frac{1}{2u-1} (S_{-1,-1}(-u) S_{0,1}(u-1) - S_{-1,-1}(u-1) S_{0,1}(-u)).
  \end{align}
  Also using \eqref{EQ:SS} we have that
  \begin{align}
      S_{-1,-1}(u-1) &S_{-1,0}(-u)  = S_{-1,0}(-u) S_{-1,-1}(u-1) \\ \notag
      &\quad + S_{-1,1}(u-1) S_{1,0}(-u) - S_{-1,1}(-u) S_{0,-1}(u-1) \\ \notag
     &\quad + \frac{1}{2u-1} (S_{-1,-1}(u-1) S_{-1,0}(-u) - S_{-1,-1}(-u) S_{-1,0}(u-1)) \\ \notag
     &\quad - \frac{1}{2u-1} (S_{-1,1}(u-1) S_{1,0}(-u) - S_{-1,1}(-u) S_{1,0}(u-1))
     \end{align}
     Plugging this into \eqref{EQ:tmp43}, canceling terms, then multiplying on the right by $S_{1,0}(u-2)$ to get the fourth term of \eqref{EQ:tmp35} now gives
     \begin{align}  \notag
  S_{1,0}(-u) S_{-1,1}(u-1) &S_{1,0}(u-2)  = S_{1,1}(-u) S_{0,1}(u-1) S_{1,0}(u-2) \\ \notag
    &\quad - S_{-1,0}(-u) S_{-1,-1}(u-1) S_{1,0}(u-2) \\ \notag
    &\quad + S_{-1,1}(-u) S_{0,-1}(u-1) S_{1,0}(u-2) \\ \notag
    &\quad + \frac{1}{2u-1} \Big( 
     -S_{-1,0}(-u) S_{1,1}(u-1) + S_{-1,0}(u-1) S_{1,1}(-u) \\ \notag
     &\quad \qquad + S_{-1,-1}(-u) S_{0,1}(u-1) - S_{-1,-1}(u-1) S_{0,1}(-u) \\ \notag
     &\quad \qquad - S_{-1,-1}(u-1) S_{-1,0}(-u) + S_{-1,-1}(-u) S_{-1,0}(u-1) \\ \notag
     &\quad \qquad + S_{-1,1}(u-1) S_{1,0}(-u) - S_{-1,1}(-u) S_{1,0}(u-1) \Big ) S_{1,0}(u-2).
     \end{align}
     Now using \eqref{EQ:Smatrix2} we get that
     \begin{align}  \label{EQ:tmp44}
  S_{1,0}(-u) S_{-1,1}(u-1) &S_{1,0}(u-2)  = S_{1,1}(-u) S_{0,1}(u-1) S_{1,0}(u-2) \\ \notag
    &\quad - D_{-1,a}(-u) E_a(-u) D_{-1,-1}(u-1)  D_{1,b}(u-2) E_b(u-2)  \\ \notag
    &\quad + D_{-1,1}(-u) F_a(u-1) D_{a,-1}(u-1) D_{1,b}(u-2) E_b(u-2) \\ \notag
    &\quad + \frac{1}{2u-1} \Big( 
     -S_{-1,0}(-u) S_{1,1}(u-1) + S_{-1,0}(u-1) S_{1,1}(-u) \\ \notag
     &\quad \qquad + S_{-1,-1}(-u) S_{0,1}(u-1) - S_{-1,-1}(u-1) S_{0,1}(-u) \\ \notag
     &\quad \qquad - S_{-1,-1}(u-1) S_{-1,0}(-u) + S_{-1,-1}(-u) S_{-1,0}(u-1) \\ \notag
     &\quad \qquad + S_{-1,1}(u-1) S_{1,0}(-u) - S_{-1,1}(-u) S_{1,0}(u-1) \Big ) S_{1,0}(u-2).
     \end{align}
     
     We do not need to manipulate the fifth terms of \eqref{EQ:tmp35}, but for convenience we recall that this term is
     \begin{equation} \label{EQ:tmp45}
   -S_{1,1}(-u) S_{0,1}(u-1) S_{1,0}(u-2).
     \end{equation}
     
  Finally we look at the sixth term of \eqref{EQ:tmp35}.
   By \eqref{EQ:Smatrix2} we have that
  \begin{align} \label{EQ:tmp46}
     & S_{-1,0}(-u) S_{1,-1}(u-1) S_{-1,0}(u-2) = \\ \notag
      &\qquad \qquad \qquad D_{-1,a}(-u) E_a(-u) D_{1,-1}(u-1) D_{-1,b}(u-2) E_b(u-2).
  \end{align}
  
  So by examining \eqref{EQ:tmp30}, \eqref{EQ:tmp32}, \eqref{EQ:tmp37}, \eqref{EQ:tmp44}, \eqref{EQ:tmp45}, and \eqref{EQ:tmp46},
  and canceling as much as possible (using that in all of these we are summing over $a,b,c \in \{\pm1\}$)
  we see that \eqref{EQ:tmp35} is equal to
  \begin{align}
      D_{-1,-1}&(-u) D_{-1,-1}(u-1) G(u-2)  -D_{-1,1}(-u) D_{1,-1}(u-1)G(u-2)   \\ \notag
    & + \frac{1}{2u-1} A
     S_{1,0}(u-2).
  \end{align}
  where
  \begin{align*}
A&=   
   D_{-1,-1}(-u) D_{-1,c}(u-1) (E_c(-u) - E_c(u-1))  \\
   &\qquad -  D_{-1,1}(-u) D_{1,c}(u-1) (E_c(-u)-E_c(u-1)) \\ \notag
  &  \qquad -S_{-1,0}(-u) S_{1,1}(u-1) + S_{-1,0}(u-1) S_{1,1}(-u) \\
  &\qquad + S_{-1,-1}(-u) S_{0,1}(u-1)  - S_{-1,-1}(u-1) S_{0,1}(-u)\\
  &\qquad - S_{-1,-1}(u-1) S_{-1,0}(-u) + S_{-1,-1}(-u) S_{-1,0}(u-1) \\ \notag
     & \qquad + S_{-1,1}(u-1) S_{1,0}(-u) - S_{-1,1}(-u) S_{1,0}(u-1).
  \end{align*}
  So to prove that $C(u) = \operatorname{sdet}(u)$, we need to show that $A=0$.
  For convenience, we label these 6 lines $A_1, \dots A_6$.  
  Furthermore, after distributing, each line has 2 terms.  We refer to these as $A_{1,1}, A_{1,2}, A_{2,1}$, etc.
  So for example $A_{1,2} =  -D_{-1,1}(-u) D_{1,c}(u-1) E_c(u-1).$

  First note that 
  $A_{1,2}$ cancels with $A_{5,2}$ and $A_{2,2}$ cancels with $A_{6,2}$.
  
  Next note that both $A_{1,1}$ and $A_{2,1}$ contain terms of the form
   $ D_{x,c}(u-1) E_c{-u}$ where $x \in \{\pm1\}$ and we are summing over $c \in \{\pm 1\}$.
   Using Theorem \ref{T:1} \eqref{EQ:DE} we get that
  \begin{align*}
    D_{x,c}(u-1) E_c(-u) &= E_c(-u) D_{x,c}(u-1) + \tfrac{2}{2u-1} D_{x,b}(u-1)(E_b(-u)-E_b(u-1)) \\
       &\quad + (F_b(u-1) - E_{-b}(-u)) D_{b,-x}(u-1).
  \end{align*}
  Since we are summing over both $b$ and $c$ in $\{\pm 1\}$, we can replace the $b$'s with $c$'s and solve for $D_{x,c}(u-1)E_c(-u)$.
  Then we can replace $-c$ in the last term.  This gives
  This gives
  \begin{align*}
     D_{x,c}(u-1) E_c(-u) &= \tfrac{2u-1}{2u-3} \Big( E_c(-u) D_{x,c}(u-1) + (F_{-c}(u-1)-E_{c}(-u))D_{-c,-x}(u-1)\Big) \\
     &\quad - \tfrac{2}{2u-3} D_{x,c}(u-1) E_c(u-1).
  \end{align*}
  Next note that the two terms to the right of the equals sign with $E_c(-u)$ cancel when $c=-x$.
  Thus
  \begin{align*}
     D_{x,c}(u-1) E_c(-u) &= \tfrac{2u-1}{2u-3} \Big( E_x(-u) D_{x,x}(u-1) \\
     &\quad \qquad \qquad - E_x(-u) D_{-x,-x}(u-1) + F_{-c}(u-1)D_{-c,-x}(u-1)\Big) \\
     &\quad - \tfrac{2}{2u-3} D_{x,c}(u-1) E_c(u-1).
  \end{align*}
  So 
  \begin{align} \label{EQ:t50}
  A_{1,1}&+A_{2,1}  
     = \\ \notag
     &\tfrac{2u-1}{2u-3} \Big( 
     D_{-1,-1}(-u) E_{-1}(-u) D_{-1,-1}(u-1) 
     - D_{-1,-1}(-u) E_{-1}(-u) D_{1,1}(u-1)  \\ \notag
&\qquad \quad    -D_{-1,1}(-u) E_{1}(-u) D_{1,1}(u-1) 
     + D_{-1,1}(-u) E_{1}(-u) D_{-1,-1}(u-1)  \\ \notag
  &\qquad \quad  + 
  D_{-1,-1}(-u) F_{-c}(u-1)D_{-c,1}(u-1) -
  D_{-1,1}(-u) F_{-c}(u-1)D_{-c,-1}(u-1)\Big) \\ \notag
     &+ \tfrac{2}{2u-3} \Big(
     -D_{-1,-1}(-u) D_{-1,c}(u-1) E_c(u-1)
     +D_{-1,1}(-u) D_{1,c}(u-1) E_c(u-1) \Big) = \\ \notag
     &\tfrac{2u-1}{2u-3} \Big( 
     D_{-1,c}(-u) E_{c}(-u) D_{-1,-1}(u-1) 
     - D_{-1,c}(-u) E_{c}(-u) D_{1,1}(u-1)  \\ \notag
  &\qquad \quad  + 
  D_{-1,-1}(-u) F_{-c}(u-1)D_{-c,1}(u-1) -
  D_{-1,1}(-u) F_{-c}(u-1)D_{-c,-1}(u-1)\Big) \\ \notag
     &+ \tfrac{2}{2u-3} \Big(
     -D_{-1,-1}(-u) D_{-1,c}(u-1) E_c(u-1)
     +D_{-1,1}(-u) D_{1,c}(u-1) E_c(u-1) \Big) = \\ \notag
     &\tfrac{2u-1}{2u-3} \Big( 
     S_{-1,0}(-u) S_{-1,-1}(u-1) -
     S_{-1,0}(-u) S_{1,1}(u-1) \\ \notag
  &\qquad \quad  + 
  S_{-1,-1}(-u) S_{0,1}(u-1) -
  S_{-1,1}(-u) S_{0,-1}(u-1) \Big) \\ \notag
     &+ \tfrac{2}{2u-3} \Big(
     -S_{-1,-1}(-u) S_{-1,0}(u-1) +
     S_{-1,1}(-u) S_{1,0}(u-1)
     \Big),
  \end{align}
  where for the last equality we are using \eqref{EQ:Smatrix}
  
  Next we use \eqref{EQ:SS}
  to get
  \begin{align*}
   A_{3,2} &= S_{-1,0}(u-1) S_{1,1}(-u)  \\
   &= S_{1,1}(-u) S_{-1,0}(u-1) + S_{-1,-1}(u-1)S_{0,1}(-u) - S_{1,1}(-u)S_{-1,0}(u-1) + L_{3,2} \\
   &= S_{-1,-1}(u-1)S_{0,1}(-u) + L_{3,2},
  \end{align*}
  where
  \begin{align*}
  L_{3,2} = \tfrac{1}{2u-1} &\Big(
                  S_{-1,1}(-u) S_{1,0}(u-1) - S_{-1,1}(u-1) S_{1,0}(-u) \\
                  &\quad - S_{-1,-1}(-u) S_{-1,0}(u-1) + S_{-1,-1}(u-1) S_{-1,0}(-u) \Big).
  \end{align*}
  Here we are calculating $L_{3,2}$ by calculating the ``lower degree terms''
  of \eqref{EQ:SS} applied to $-[S_{1,1}(-u), S_{-1,0}(u-1)]$.  Note that if you change the order of these terms then 
  apply \eqref{EQ:SS} you will get something which appears to be rather different.
  
 So 
 \begin{align} \label{EQ:t47}
  A_{3,2} + A_{4,2} = 
    L_{3,2}.
 \end{align}
 
 Next we calculate using \eqref{EQ:SS} that
 \begin{align} \label{EQ:t49}
    A_{4,1} &= S_{-1,-1}(-u) S_{0,1}(u-1)  \\ \notag
    &= S_{0,1}(u-1) S_{-1,-1}(-u) +S_{-1,0}(-u) S_{1,1}(u-1) - S_{0,1}(u-1) S_{-1,-1}(-u) + L_{4,1} \\ \notag
    &= S_{-1,0}(-u) S_{1,1}(u-1) + L_{4,1},
 \end{align}
 where 
 \begin{align} \label{EQ:t48}
 L_{4,1}= \tfrac{1}{2u-1} &\Big( -S_{-1,1}(u-1) S_{0,-1}(-u) + S_{-1,1}(-u)S_{0,-1}(u-1) \\ \notag
  &\quad +S_{-1,0}(u-1) S_{-1,-1}(-u) - S_{-1,0}(-u) S_{-1,-1}(u-1)\Big).
 \end{align}
 We will simplify this by finding a nice expression for 
 \[
 S_{-1,0}(u-1)S_{-1,-1}(-u) - S_{-1,1}(u-1) S_{0,-1}(-u).
 \]
 By using \eqref{EQ:SS} on the first term, we get that
 \begin{align*}
 S_{-1,0}&(u-1)S_{-1,-1}(-u) - S_{-1,1}(u-1) S_{0,-1}(-u) = \\
 &S_{-1,-1}(-u) S_{-1,0}(u-1) - S_{-1,1}(-u) S_{1,0}(u-1) \\
 &\quad +\tfrac{1}{2u-1}\Big(S_{-1,0}(u-1) S_{-1,-1}(-u) - S_{-1,0}(-u) S_{-1,-1}(u-1)\\
 &\qquad \qquad \qquad -S_{-1,1}(u-1) S_{0,-1}(-u) + S_{-1,1}(-u) S_{0,-1}(u-1) \Big).
 \end{align*}
 By solving this equation for $ S_{-1,0}(u-1)S_{-1,-1}(-u) - S_{-1,1}(u-1) S_{0,-1}(-u)$ we get that 
 \begin{align*}
 S_{-1,0}(u-1)S_{-1,-1}(-u) &- S_{-1,1}(u-1) S_{0,-1}(-u) =  \\
 &\tfrac{2u-1}{2u-2} \Big( S_{-1,-1}(-u) S_{-1,0}(u-1) - S_{-1,1}(-u) S_{1,0}(u-1)\Big) \\
 &\quad + \tfrac{1}{2u-2}\Big(S_{-1,1}(-u) S_{0,-1}(u-1) - S_{-1,0}(-u) S_{-1,-1}(u-1)\Big).
 \end{align*}
 Plugging this into \eqref{EQ:t48} gives
 \begin{align*} 
 L_{4,1}&= \tfrac{1}{2u-1} \Big(  S_{-1,1}(-u)S_{0,-1}(u-1) - S_{-1,0}(-u) S_{-1,-1}(u-1)\Big) \\
  &\quad+\tfrac{1}{2u-2}
   \Big( S_{-1,-1}(-u) S_{-1,0}(u-1) - S_{-1,1}(-u) S_{1,0}(u-1)\Big) \\
 &\quad + \tfrac{1}{(2u-2)(2u-1)}\Big(S_{-1,1}(-u) S_{0,-1}(u-1) - S_{-1,0}(-u) S_{-1,-1}(u-1)\Big).
 \end{align*}
 Now using this and \eqref{EQ:t49} we have that
 \begin{align} \label{EQ:t53}
   A_{3,1} +A_{4,1} &=  S_{-1,-1}(-u)S_{0,1}(u-1) - S_{-1,0}(-u) S_{1,1}(u-1)  \\ \notag
   &=
 \tfrac{1}{2u-1} \Big(  S_{-1,1}(-u)S_{0,-1}(u-1) - S_{-1,0}(-u) S_{-1,-1}(u-1)\Big) \\ \notag
  &\quad+\tfrac{1}{2u-2}
   \Big( S_{-1,-1}(-u) S_{-1,0}(u-1) - S_{-1,1}(-u) S_{1,0}(u-1)\Big) \\ \notag
 &\quad + \tfrac{1}{(2u-2)(2u-1)}\Big(S_{-1,1}(-u) S_{0,-1}(u-1) - S_{-1,0}(-u) S_{-1,-1}(u-1)\Big).
 \end{align}
 Furthermore we can plug this into the expression in the last equality of \eqref{EQ:t50} to get that
 \begin{align} \label{EQ:t51}
    A_{1,1} + A_{2,1} = 
     &\tfrac{2u-1}{2u-3} \Big( 
     S_{-1,0}(-u) S_{-1,-1}(u-1) - S_{-1,1}(-u) S_{0,-1}(u-1)  \\ \notag
      &\qquad + 
 \tfrac{1}{2u-1} \Big(  S_{-1,1}(-u)S_{0,-1}(u-1) - S_{-1,0}(-u) S_{-1,-1}(u-1)\Big) \\ \notag
  &\qquad+\tfrac{1}{2u-2}
   \Big( S_{-1,-1}(-u) S_{-1,0}(u-1) - S_{-1,1}(-u) S_{1,0}(u-1)\Big) \\ \notag
 &\qquad + \tfrac{1}{(2u-2)(2u-1)}\Big(S_{-1,1}(-u) S_{0,-1}(u-1) - S_{-1,0}(-u) S_{-1,-1}(u-1)\Big)
     \Big) \\ \notag
     &+ \tfrac{2}{2u-3} \Big(
     -S_{-1,-1}(-u) S_{-1,0}(u-1) +
     S_{-1,1}(-u) S_{1,0}(u-1)
     \Big),
 \end{align}
 
 Finally we use \eqref{EQ:SS} to write
 \begin{align*}
  A_{5,1} &= -S_{-1,-1}(u-1) S_{-1,0}(-u) \\
   &= -S_{-1,0}(-u) S_{-1,-1}(u-1) - S_{-1,1}(u-1) S_{1,0}(-u) + S_{-1,1}(-u) S_{0,-1}(u-1)  + L_{5,1},
 \end{align*}
 where 
 \begin{align*}
  L_{5,1} &= \tfrac{1}{2u-1} \Big(-S_{-1,-1}(u-1) S_{-1,0}(-u) + S_{-1,-1}(-u) S_{-1,0}(u-1) \\
  &\qquad \qquad  + S_{-1,1}(u-1) S_{1,0}(-u) - S_{-1,1}(-u) S_{1,0}(u-1) \Big).
 \end{align*}
 So
 \begin{align} \label{EQ:t52}
   A_{5,1} + A_{6,1} = 
    -S_{-1,0}(-u) S_{-1,-1}(u-1) + S_{-1,1}(-u) S_{0,-1}(u-1)  + L_{5,1}.
 \end{align}
 
 Now to find $A = A_{1,1} + A_{2,1} + A_{3,1} + A_{3,2} + A_{4,1} + A_{4,2} + A_{5,1} + A_{6,1}$, first note
 that $L_{3,2} = - L_{5,1}$,
 then calculate the coefficients of 
 $S_{-1,0}(-u) S_{-1,-1}(u-1)$,
 $S_{-1,1}(-u) S_{0,-1}(u-1)$,
 $S_{-1,-1}(-u) S_{-1,0}(u-1)$,
 and
 $S_{-1,1}(-u) S_{1,0}(u-1)$
 in \eqref{EQ:t51}, \eqref{EQ:t53}, \eqref{EQ:t47}, and \eqref{EQ:t52}.
 This shows that $A=0.$

\end{proof}

From the theorem it immediately follows that
the center of of $Y_3^+$ is contained in every shifted twisted Yangian:
\begin{Corollary}
The center of $Y_3^+$ is contained in $Y_3^+(k)$ for all $k > 0$.
\end{Corollary}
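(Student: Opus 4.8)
The plan is to read off the conclusion directly from the previous theorem together with the definition of the shifted twisted Yangian. By that theorem, the center of $Y_3^+$ is generated by the coefficients of the even powers of $u^{-1}$ in the series $C(u)$, so it suffices to show that every coefficient of $C(u)$ lies in $Y_3^+(k)$.

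First I would recall that, by definition, $Y_3^+(k)$ contains all of the elements $D_{i,j}^{(m)}$ and $G^{(n)}$ for $i,j \in \{\pm 1\}$ and $m,n \in \Z_{\geq 0}$; only the $E_l^{(r)}$ are restricted to $r > k$. Next I would observe that $C(u)$ is built from the series $D_{i,j}(-u)$, $D_{i,j}(u-1)$, and $G(u-2)$, and that each of these, viewed as an element of $Y_3^+[[u^{-1}]]$, has all of its $u^{-r}$-coefficients in the subalgebra generated by $\{D_{i,j}^{(m)}, G^{(n)}\}$. Indeed, $D_{i,j}(-u) = \sum_{m \geq 0} (-1)^m D_{i,j}^{(m)} u^{-m}$, while the substitutions $u \mapsto u-1$ and $u \mapsto u-2$ are carried out via $(u-c)^{-m} = u^{-m}\sum_{l\geq 0}\binom{m+l-1}{l} c^{l} u^{-l}$, so that each coefficient of $D_{i,j}(u-1)$, respectively $G(u-2)$, is a finite scalar combination of the $D_{i,j}^{(m)}$, respectively the $G^{(n)}$. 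Multiplying these three series together, the coefficient of $u^{-r}$ in $C(u)$ is a polynomial with scalar coefficients in the generators $D_{i,j}^{(m)}$ and $G^{(n)}$, hence lies in $Y_3^+(k)$.

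Putting this together: every coefficient of $C(u)$, and in particular every coefficient of an even power of $u^{-1}$ in $C(u)$, lies in $Y_3^+(k)$; since the latter coefficients generate the center of $Y_3^+$ by the preceding theorem, the center is contained in $Y_3^+(k)$. The argument needs no serious input beyond the explicit formula for $C(u)$: the only point to track carefully is the elementary bookkeeping of the argument shifts $u \mapsto u-1$ and $u \mapsto u-2$, which is routine, so there is essentially no substantive obstacle.
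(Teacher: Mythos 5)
Your proposal is correct and is essentially the argument the paper intends when it says the corollary ``immediately follows'' from the theorem: $C(u)$ is built only from the series $D_{i,j}$ and $G$ (at shifted arguments), whose coefficients all lie in $Y_3^+(k)$ since the shift condition $r>k$ only restricts the generators $E_l^{(r)}$, and the center is generated by coefficients of $C(u)$. The extra bookkeeping you supply for the substitutions $u\mapsto -u$, $u-1$, $u-2$ is accurate and harmless.
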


Finally we conjecture that the center of every shifted twisted Yangian
is equal to the center of $Y_3^+$.


\begin{thebibliography}{KTW$^2$Y }

\bibitem[B1]{Br}
J.Brown,
Twisted Yangians and finite $W$-algebras,
{\em Transform. Groups} {\bf 14}  (2009), 87--114,
{\tt arXiv:0710.2918}.

\bibitem[B2]{B2}
\bysame,
 Representation theory of rectangular finite $W$-algebras,
{\em J. Algebra} {\bf 340} (2011), no.\ 1, 114--150,
{\tt arXiv:1003.2179}

\bibitem[BG1]{BG1}
J. Brown and S. Goodwin,
Finite dimensional irreducible representations of finite W-algebras 
  associated to even multiplicity nilpotent orbits in classical Lie algebras,
{\em Math. Z.} {\bf 273} (2013), 123--160,
{\tt arXiv:1009.3869}.

\bibitem[BG2]{BG2}
\bysame,
 On changing highest weight theories for finite $W$-algebras,
{\em J. Algebraic Combin.} {\bf 36} (2013), 87--116,
{\tt arXiv:1105.3308}.
 
\bibitem[BG3]{BG3}
\bysame,
Non-integral representation theory of even multiplicity finite $W$-algebras,
{\em J. Algebra} {\bf 380} (2013), 30--35,
{\tt arXiv:1112.6293}.

\bibitem[BG4]{BG4}
\bysame,
 Representation theory of type B and C standard Levi $W$-algebras,
{\em Pacific J. Math.} {\bf 269} (2014), no.\ 1, 31--71,
{\tt arXiv:1209.2882}.



\bibitem[BGK]{BGK}
J. Brundan, S. Goodwin and A. Kleshchev,
 Highest weight theory for finite $W$-algebras,
{\it Int. Math. Res. Notices} (2008),
{\tt arXiv:0801.1337v1}.

\bibitem[BK1]{BKparabpres}
J. Brundan and A. Kleshchev,
Parabolic presentations of the Yangian $Y(\mathfrak{gl}_n)$,
{\it Commun. Math. Phys.} {\bf 254} (2005), 191--220,
{\tt math.QA/0407011}.

\bibitem[BK2]{BKshifted}
\bysame,
 Shifted Yangians and finite $W$-algebras,
{\it Adv. Math.} {\bf 200} (2006), 136--195, 
{\tt math.QA/0407012}.

\bibitem[BK3]{BKrep}
\bysame,
 Representations of
shifted Yangians and finite $W$-algebras,
{\it Mem. Amer. Math. Soc.} {\bf 196} no. 918 (2008),
{\tt  arXiv:math.RT/0508003}.

\bibitem[Dr]{Dr}
V. G. Drinfeld,
A new realization of Yangians and quantized affine algebras,
{\em Soviet Math. Dokl.} {\bf 36} (1988), 212--216.

\bibitem[FMO]{FMO}
V. Futorny, A. Molev, S. Ovsienko,
Gelfand-Tsetlin bases for representations of finite W-algebras and shifted Yangians,
in ``Lie theory and its applications in physics VII'', (H. D. Doebner and V. K. Dobrev,
Eds), Proceedings of the VII International Workshop, Varna, Bulgaria, June 2007. Heron Press, Sofia
(2008), 352--363, see also {\tt arXiv:0711.0552}

\bibitem[KTW$^2$Y]{KTWWY}
    J. Kamnitzer, P. Tingley, B. Webster, A. Weekes, O. Yacobi,
    Highest weights for truncated shifted Yangians and product monomial crystals,
{\tt arXiv:1511.09131}.

\bibitem[M]{M3}
A. Molev,
{\it Yangians and their applications},
Handbook of Algebra, Vol. 3, (M. Hazewinkel, Ed.), Elsevier, 2003, 907--959. 

\bibitem[MNO]{MNO}
A. Molev, M. Nazarov and G. Olshanskii,
{\it Yangians and classical Lie algebras},
Russian Math. Surveys {\bf 51} (1996), 205--282.

\bibitem[Pr]{P1}
A. Premet,
 Special transverse slices and their enveloping algebras,
{\it Adv. Math.} {\bf 170} (2002), 1--55.

\bibitem[Pe]{Pe}
Y. Peng, On shifted super Yangians and a class of finite $W$-superalgebras,
{\it J. Algebra} {\bf 422} (2014), 520--562,
{tt  arXiv:1308.4772}

\end{thebibliography}
\end{document}